\newcommand{\opnorm}{\@ifstar\@opnorms\@opnorm}
\newcommand{\@opnorms}[1]{%
 \left|\mkern-1.5mu\left|\mkern-1.5mu\left|
 #1
 \right|\mkern-1.5mu\right|\mkern-1.5mu\right|
}
\newcommand{\@opnorm}[2][]{%
 \mathopen{#1|\mkern-1.5mu#1|\mkern-1.5mu#1|}
 #2
 \mathclose{#1|\mkern-1.5mu#1|\mkern-1.5mu#1|}
}
\theoremstyle{plain}
\begin{document}


\theoremstyle{plain}
\newtheorem{theorem}{Theorem} [section]
\newtheorem{corollary}[theorem]{Corollary}
\newtheorem{lemma}[theorem]{Lemma}
\newtheorem{proposition}[theorem]{Proposition}


\theoremstyle{definition}
\newtheorem{definition}[theorem]{Definition}
\theoremstyle{remark}
\newtheorem{remark}[theorem]{Remark}

\numberwithin{theorem}{section}
\numberwithin{equation}{section}

\title[Optimal embeddings for Lebesgue and Morrey norms]{The optimal exponent in the embedding into the Lebesgue spaces for functions with gradient in the Morrey space}

\thanks{The authors were supported by MINECO grant MTM2014-52402-C3-1-P.
X. Cabr\'e is also supported by MINECO grants MTM2017-84214-C2-1-P and  MDM-2014-0445, and is member of the Catalan research group 2017
SGR 1392.
F. Charro was also partially supported by a Juan de la Cierva fellowship.
}

\author[X. Cabr\'e]{Xavier Cabr\'e}
\address{X.C.\textsuperscript{1,2,3} ---
\textsuperscript{1}ICREA, Pg.\ Lluis Companys 23, 08010 Barcelona, Spain \&
\textsuperscript{2}Universitat Polit\`ecnica de Catalunya, Departament de Matem\`{a}tiques,
Diagonal 647, 08028 Barcelona, Spain \&
\textsuperscript{3}BGSMath, Campus de Bellaterra, Edifici C, 08193 Bellaterra, Spain.
}
\email{xavier.cabre@upc.edu}

\author[F. Charro]{Fernando Charro}
\address{F.C. ---
Department of Mathematics, Wayne State  University, 656 W. Kirby, Detroit, MI 48202, USA.}
\email{fcharro@wayne.edu}

\keywords{Morrey spaces, Optimal embeddings, Cantor sets.
\\
\indent 2010 {\it Mathematics Subject Classification:}
42B37, 46E35}
\date{}

\begin{abstract}
We study the following natural question that, apparently, has not been well addressed in the literature:
Given  functions $u$ with support in the unit ball 
$B_1\subset\mathbb{R}^n$ and with
gradient in the Morrey space $M^{p,\lambda}(B_1)$, where $1<p<\lambda<n$, what is the largest range of  exponents $q$ for which necessarily  
$u\in L^{q}(B_1)$?
While David R. Adams proved in 1975 that this embedding holds for $q\leq\lambda p/(\lambda-p)$, an article from 2011 claimed the embedding in the larger range $q<n p/(\lambda-p)$. Here we disprove this last statement by constructing  a function that provides a counterexample for $q>\lambda p/(\lambda-p)$. The function is basically a negative power of the distance to a set of Hausdorff dimension $n-\lambda$. When $\lambda\notin\mathbb{Z}$, this set is a fractal.  We also make a detailed study of the radially symmetric case, a situation in which the exponent $q$ can go up to $np/(\lambda-p)$.
\end{abstract}

\maketitle


\section{Introduction}\label{section.intro}

This article originated from the following natural question:
{\it Given functions $u$ with support in the unit ball $B_1(0)\subset\mathbb{R}^n$ and with
gradient in the Morrey space $M^{p,\lambda}(B_1(0))$, where $1<p<\lambda<n$,
what is the  optimal range of exponents $q$ such that necessarily $u\in L^{q}(B_1(0))$?}
Apparently, this question has not been well addressed in the literature. 
In fact, the authors of \cite[Theorem 2.5]{Adams.Xiao.2011} claimed a range of exponents which, as we will prove in the current paper, turns out to be larger than the true one.

Our motivation came from the recent work \cite{Cabre} of the first
author in collaboration with A. Figalli, X. Ros-Oton, and J. Serra, on the regularity of
stable solutions to semilinear elliptic equations. 
Actually, the results of \cite{Cabre} are deduced from a Morrey type bound for the 
gradient of a stable solution, among other tools (see Remark \ref{more.details.on.paper} below for more details).

The following is the precise statement of the question that we are concerned with. Given real numbers $p$ and $\lambda$ such that
\[
1< p<\lambda<n,
\]
we wish to know for which   exponents $q$  the inequality 
\begin{equation}\label{estimate.intro.morrey}
\|u\|_{L^{q}(B_1(0))}\leq C\, \|\nabla u\|_{M^{p,\lambda}(B_1(0))}
\end{equation}
holds true for functions $u$ with support in $B_1(0)\subset\mathbb{R}^n$ and for a constant $C$ independent of $u$, where
\[
\|\nabla u\|_{M^{p,\lambda} (\Omega)}^{p}
:=
\sup_{r>0,\,y\in \overline\Omega} \bigg(r^{\lambda-n} \int_{\Omega\,\cap B_r(y)} |\nabla u(x)|^{p}\, dx\bigg)
\]
 is the Morrey norm of $\nabla u$ in a domain $\Omega\subset\mathbb{R}^n$.
Notice that when $\lambda$ equals the dimension $n$ and $\Omega=B_1(0)$, \eqref{estimate.intro.morrey} corresponds to the Sobolev inequality in $B_1(0)\subset\mathbb{R}^n$.

In 1975, D. R. Adams \cite[Theorem 3.1]{Adams.1975} proved the following result. 
Let us denote in the sequel
\[
 p_1:=\frac{\lambda p}{\lambda-p}\qquad\textrm{and}\qquad p_2:=\frac{n p}{\lambda-p}.
\]
Observe that, clearly, $ p_1< p_2$.

\begin{theorem}[D. R. Adams \cite{Adams.1975}]\label{Adams.original.result}
Let $p,\lambda\in\mathbb{R}$ satisfy $1<p<\lambda<n$ and let  $u:\mathbb{R}^n\to\mathbb{R}$ be a Lipschitz function with $u\equiv0$ in $\mathbb{R}^n\setminus B_1(0)$.
 Then, for every $q\leq p_1$, inequality \eqref{estimate.intro.morrey} holds for a constant $C$ depending only on $n, p,$ and $\lambda$.
\end{theorem}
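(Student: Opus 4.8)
The plan is to reduce the inequality to the boundedness of the Riesz potential of order one,
\[
I_1 g(x) := \int_{\mathbb{R}^n} \frac{g(y)}{|x-y|^{n-1}}\, dy,
\]
on Morrey spaces. First I would recall that, since $u$ is Lipschitz with support in $B_1(0)$, the classical representation formula $u(x) = c_n \int_{\mathbb{R}^n} \frac{(x-y)\cdot\nabla u(y)}{|x-y|^n}\, dy$ holds, so that $|u(x)| \le C_n\, I_1(|\nabla u|)(x)$ pointwise. It thus suffices to prove
\[
\|I_1 g\|_{L^{p_1}(B_1(0))} \le C\, \|g\|_{M^{p,\lambda}(B_1(0))}
\]
for every $g\ge 0$ supported in $\overline{B_1(0)}$, the case of a general exponent $q<p_1$ then being immediate from Hölder's inequality and $|B_1(0)|<\infty$. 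I would also record the elementary fact that, for functions supported in $\overline{B_1(0)}$, one may extend by zero and replace $\|g\|_{M^{p,\lambda}(B_1(0))}$ by the equivalent quantity $\|g\|_{M^{p,\lambda}(\mathbb{R}^n)}$, which is slightly more convenient below.

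The core of the proof is a Hedberg-type pointwise estimate. Using the identity $I_1 g(x) = (n-1)\int_0^\infty s^{-n}\big(\int_{B_s(x)} g\big)\, ds$ and splitting the integral at a free radius $\rho>0$, I would bound, for $s\le\rho$,
\[
\int_{B_s(x)} g \le C\, s^n\, (Mg)(x),
\]
where $M$ denotes the Hardy--Littlewood maximal operator, and, for $s\ge\rho$,
\[
\int_{B_s(x)} g \le |B_s|^{1-1/p}\Big(\int_{B_s(x)} g^p\Big)^{1/p} \le C\, s^{n-\lambda/p}\, \|g\|_{M^{p,\lambda}(\mathbb{R}^n)},
\]
the last inequality being just Hölder's inequality together with the definition of the Morrey norm. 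These two bounds give $I_1 g(x) \le C\big(\rho\, (Mg)(x) + \rho^{\,1-\lambda/p}\, \|g\|_{M^{p,\lambda}(\mathbb{R}^n)}\big)$ for all $\rho>0$; choosing $\rho$ so that the two terms balance (essentially $\rho^{\lambda/p} = \|g\|_{M^{p,\lambda}}/(Mg)(x)$) yields the key estimate
\[
I_1 g(x) \le C\, \big((Mg)(x)\big)^{1-p/\lambda}\, \|g\|_{M^{p,\lambda}(\mathbb{R}^n)}^{\,p/\lambda}.
\]

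To conclude, I would raise this inequality to the power $p_1$, use that $p_1(1-p/\lambda)=p$, and integrate over $B_1(0)$, obtaining $\|I_1 g\|_{L^{p_1}(B_1(0))}^{p_1} \le C\, \|g\|_{M^{p,\lambda}(\mathbb{R}^n)}^{\,p_1-p}\int_{B_1(0)} (Mg)^p$. The proof is then finished once we know that $\int_{B_1(0)} (Mg)^p \le C\, \|g\|_{M^{p,\lambda}(\mathbb{R}^n)}^p$, which is precisely the boundedness of $M$ on $M^{p,\lambda}(\mathbb{R}^n)$ for $p>1$ (combined with the trivial bound $\int_{B_1(0)} h^p \le \|h\|_{M^{p,\lambda}(\mathbb{R}^n)}^p$). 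I would either invoke the Chiarenza--Frasca theorem for this, or prove it in the same spirit: estimate the average of $g$ over an arbitrary ball $B_r$ by $C\, r^{-\lambda/p}\,\|g\|_{M^{p,\lambda}}$, deduce a pointwise bound on $Mg$ restricted to such a ball, and run a standard sublevel-set argument to recover the Morrey estimate for $Mg$.

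The step I expect to require the most care is the two-sided use of the bound on $\int_{B_s(x)} g$: it is only effective because $\lambda>p$ forces the exponent $1-\lambda/p$ to be strictly negative, which is exactly what makes the far integral $\int_\rho^\infty$ converge. If one only had $\nabla u\in L^p$ this step would break down, and this is the structural reason why the full Morrey hypothesis is needed. The boundedness of $M$ on $M^{p,\lambda}$ is the other ingredient that should not be taken for granted; if a self-contained treatment is wanted, it is the part that costs the most.
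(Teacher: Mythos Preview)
Your argument is correct and follows essentially the same route as the paper: the representation $|u|\le C\,I_1|\nabla u|$, followed by a Hedberg/Adams pointwise inequality obtained by splitting the Riesz integral at a free radius, then optimizing. The paper phrases the pointwise bound as $|I_1 g|\le C\,(M_{\lambda/p}g)^{p/\lambda}(M_0 g)^{1-p/\lambda}$ in terms of fractional maximal functions, but since $M_{\lambda/p}g(x)\le C\,\|g\|_{M^{p,\lambda}}$ by H\"older, this is exactly your estimate $I_1 g(x)\le C\,(Mg(x))^{1-p/\lambda}\|g\|_{M^{p,\lambda}}^{p/\lambda}$.

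The one place where the paper is lighter is the endgame. After raising to the power $p_1$ you need $\int_{B_1}(Mg)^p\le C\,\|g\|_{M^{p,\lambda}}^p$, and you propose to get this from the Chiarenza--Frasca boundedness of $M$ on Morrey spaces. The paper avoids this: it simply uses the classical $L^p$-boundedness of the Hardy--Littlewood maximal operator, $\int_{\mathbb{R}^n}(Mg)^p\le C\int_{\mathbb{R}^n}g^p$, together with the trivial bound $\int_{B_1}g^p\le\|g\|_{M^{p,\lambda}(B_1)}^p$ (take $r=1$, $y=0$ in the definition). This is enough and keeps the proof self-contained without appealing to the Morrey-space boundedness of $M$.
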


For the reader's convenience, in Section \ref{section.embeddings.general} we will include his proof in this case $p>1$. 
The case $p=1$, which involves weak spaces, is also treated in \cite{Adams.1975}.

In fact, Adams \cite[Proposition 3.1 and Theorems 3.1 and 3.2]{Adams.1975} proved the following stronger embedding:
\begin{equation}\label{Adams.scale.invariant}
\|u\|_{M^{p_1,\lambda}(B_1(0))}\leq C \|\nabla u\|_{M^{p,\lambda}(B_1(0))}.
\end{equation}
While  inequality \eqref{Adams.scale.invariant} is dimensionless by scaling, note that the dimensionless exponent for inequality \eqref{estimate.intro.morrey} is $ q=p_2$. This suggests that \eqref{estimate.intro.morrey} could hold with $q=p_2$, or at least for $q<p_2$.
In fact, it is easy to prove that among radially symmetric functions, \eqref{estimate.intro.morrey} holds for every $q< p_2$ (see \cite[Proposition 1.2(i)]{Adams.Xiao.2016} and also Theorem \ref{main.thm.radial} below).

In 2011, the authors of \cite[Theorem 2.5]{Adams.Xiao.2011} claimed that \eqref{estimate.intro.morrey} held for every $q< p_2$ and for 
general functions,  not necessarily radial. Some years later, we realized that the proof of \cite[Theorem 2.5]{Adams.Xiao.2011} was not correct. After that, the claim was withdrawn by the same authors in the Errata papers \cite{Adams.Xiao.ERR.2015} and \cite{Adams.Xiao.2016}. 
At the same time, we could not find other works addressing the exact question of what is the optimal exponent.

In the present paper we  show that actually $q= p_1$ is the largest possible exponent in \eqref{estimate.intro.morrey}. 
To show this,  for every $q> p_1$ we construct a non-radial function, described  in detail below, for which 
$\|u\|_{L^{q}(B_1(0))}=\infty$ while $\|\nabla u\|_{M^{p,\lambda}(B_1(0))}<\infty$. 
An important feature of the function is that it depends only (up to a cutoff function) on $k$ variables $(x_1,\ldots,x_k)$, 
where $k$ is the smallest integer such that $\lambda\leq k$.  
The function is basically a negative power of the distance to a set of 
Hausdorff dimension $n-\lambda$. When $\lambda$ is not an integer this set is a fractal and, therefore, the structure of the 
function is not so ``simple''. In fact, it will be rather delicate to control the Morrey norm of its gradient.
We could not find a simpler counterexample 
for $\lambda\notin\mathbb{Z}$, although we had several candidates that finally did not work. 
The possibility of finding simpler examples remains as an open question.

In addition, we also consider a related norm, which we call the ``triple norm", given~by
\begin{equation}\label{triple}
\opnorm{\nabla u}_{p,\lambda;\Omega}^{p} :=\sup_{y\in \overline\Omega}\int_{\Omega} |\nabla u(x)|^{p} \,|x-y|^{\lambda-n}\, dx,
 \end{equation}
where $\Omega\subset\mathbb{R}^n$ is a domain. The article \cite{Cabre} on stable solutions to semilinear 
equations gives rise naturally to such a norm (see Remark \ref{more.details.on.paper} below).\footnote{The triple norm has been previously considered in the setting of the hole-filling technique for integral estimates; see \cite[Section 1.2.3]{Bensoussan-Frehse} among others. It also appears in \cite{Hou-Xiao}, where it is called the Cordes-Nirenberg norm.
}
Note that, clearly, we have
 \begin{equation}\label{relation.norms}
\|\nabla u\|_{M^{p,\lambda} (\Omega)} \leq \opnorm{\nabla u}_{p,\lambda;\Omega}
\end{equation}
for every function $u$.
The function that we construct will also satisfy $\opnorm{\nabla u}_{p,\lambda;B_1(0)}<~\infty$, and thus $q= p_1$ is the largest possible exponent also for the embedding
\begin{equation}\label{estimate.intro.triple}
\|u\|_{L^{q}(B_1(0))}\leq C\, \opnorm{\nabla u}_{p,\lambda;B_1(0)}.
\end{equation}
Instead, among radial functions we show that inequality \eqref{estimate.intro.triple} holds for every $q\leq p_2$, in contrast  to inequality \eqref{estimate.intro.morrey} for radial functions, which holds only for 
 $q< p_2$.

\begin{remark}\label{more.details.on.paper}
 The regularity results from the recent paper \cite{Cabre} on stable solutions to semilinear equations $-\Delta u= f(u)$ in a domain $\Omega\subset \mathbb{R}^n$ are based on bounds for a Morrey norm with $p=2$, 
 of $\nabla u$ or, given a point $y$, of the radial derivative $\nabla u(x)\cdot(x-y)/|x-y|$. For this, see
 \cite[Lemma 2.1, step 2 in the proof of Theorem 1.2, and proof of Theorem 7.1]{Cabre}, which also lead to the triple norm \eqref{triple}. 
 The boundedness results from \cite{Cabre} up to dimension $n\leq9$ correspond to $p=2$ and $\lambda=2$, while in the $L^q$ results for 
 $n\geq 11$ one has $p=2<\lambda<n$ as in our paper. The results of the current article are used in \cite{Cabre} to determine optimally a 
 range of exponents $q$ for which stable solutions necessarily belong to $L^q$ in dimensions $n\geq11$.
\end{remark}

Summarizing, our main contribution is the following result. It provides a counterexample to the validity of \eqref{estimate.intro.morrey} 
and \eqref{estimate.intro.triple} for $q>p_1$, given by a function $u$ which is basically a negative power of the distance to a set of 
Hausdorff dimension $n-\lambda$. When $\lambda\notin\mathbb{Z}$, this set is a fractal.

\begin{theorem}\label{intro.counterexamples}
Let $p,\lambda\in\mathbb{R}$ satisfy $1<p<\lambda<n$. Then, for every $q> p_1:=\lambda p/(\lambda-p)$ there exists a function $u:\mathbb{R}^n\to\mathbb{R}$ with $u\equiv0$ in $\mathbb{R}^n\setminus B_1(0)$, 
\begin{equation}\label{counterexamples.properties.thm}
\|u\|_{L^{q}(B_1(0))}=\infty,\quad\textrm{and}\quad \opnorm{\nabla u}_{p,\lambda;B_1(0)}<\infty.
\end{equation}
In particular, we also have
\[
\|\nabla u\|_{M^{p,\lambda} (B_1(0))} <\infty.
\]

If $\lambda$ is an integer, such function $u$ can be taken to be
\begin{equation}\label{counterexample.noninteger.general.statement.thm}
u(x)=\left(|x'|^{-\alpha}-2^\alpha\right)_{+}\,\xi(|x''|) 
\end{equation}
where $x=(x',x'')\in\mathbb{R}^{\lambda}\times\mathbb{R}^{n-\lambda}$, 
 the parameter $\alpha$ satisfies
\begin{equation}\label{hyp.alpha.counterexamples.intro.statement.thm}
\frac{\lambda}{q}\leq\alpha<\frac{\lambda-p}{p},
\end{equation}
and $\xi:\mathbb{R}^+\to[0,1]$ is a cutoff function with $\xi\equiv 1$ in $[0,1/2)$ and $\xi\equiv0$ in $\mathbb{R}^+\setminus[0,\sqrt{3}/3).$

If $k-1<\lambda<k$ for some integer $k\in[2,n]$, the function $u$ can be taken to be
\begin{equation}\label{counterexample.noninteger.intro.statement.thm}
u(x) =
\left\{
\begin{split}
&\left(\textnormal{dist}(x,\mathcal{C}_{n,\lambda})^{-\alpha}-4^\alpha\right)_{+}\hspace{62pt}\textnormal{if}\ k=n \\
&\left(\textnormal{dist}(x',\mathcal{C}_{k,\lambda})^{-\alpha}-4^\alpha\right)_{+}\,\xi(|x''|) \qquad\textnormal{if}\ k<n,
\end{split}
\right.
\end{equation}
where $x=(x',x'')\in\mathbb{R}^{k}\times\mathbb{R}^{n-k}$, $\alpha$ satisfies \eqref{hyp.alpha.counterexamples.intro.statement.thm}, $\xi$ is a cutoff function as above, and $\mathcal{C}_{k,\lambda}$ is a set of Hausdorff dimension $k-\lambda$ in $\mathbb{R}^k$ given by
\[
\mathcal{C}_{k,\lambda}=\{0\}\times C_{\gamma}\subset\mathbb{R}^{k-1}\times[-1/2,1/2],
\]
where $C_{\gamma}\subset[-1/2,1/2]$ is the generalized Cantor set with parameter 
$\gamma=1-2^{1-\frac{1}{k-\lambda}}$  defined in the following remark.
\end{theorem}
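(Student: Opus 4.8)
The plan is to verify the two properties in \eqref{counterexamples.properties.thm} directly for the explicit functions. By the pointwise inequality \eqref{relation.norms} it suffices to bound the triple norm, so the whole argument reduces to two estimates: a lower bound showing $\|u\|_{L^q(B_1(0))}=\infty$, and an upper bound $\opnorm{\nabla u}_{p,\lambda;B_1(0)}<\infty$. The auxiliary claims are elementary: $u\equiv0$ outside $B_1(0)$ follows from subtracting the constant ($2^\alpha$ or $4^\alpha$) together with $\mathcal C_{k,\lambda}\subset\{0\}\times[-1/2,1/2]$ and the cutoff $\xi$ supported in $[0,\sqrt3/3)$, and one records in passing that $u\in W^{1,p}_{\mathrm{loc}}(\mathbb R^n)$, with
\[
|\nabla u(x)|\ \lesssim\ \alpha\,D(x')^{-\alpha-1}\ +\ C\,\bigl|\xi'(|x''|)\bigr|\,D(x')^{-\alpha},
\qquad D(x'):=\mathrm{dist}(x',\mathcal C_{k,\lambda})
\]
(with $D(x')=|x'|$ in the integer case, and no $\xi$-term when $k=n$). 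I would treat the integer case first as the ``flat'' model, in which the singular set $S$ of $u$ is $\{0\}\times B^{n-\lambda}_{\sqrt3/3}$, and then the fractal case, where $\{x'=0\}$ is replaced by $\mathcal C_{k,\lambda}$ and $S=\mathcal C_{k,\lambda}\times B^{n-k}_{\sqrt3/3}$.

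The geometric input I would isolate first is the size of tubular neighbourhoods of $\mathcal C_{k,\lambda}$. Since the generalized Cantor set $C_\gamma$ with $\gamma=1-2^{\,1-1/(k-\lambda)}$ has contraction ratio $\rho=(1-\gamma)/2$ satisfying $2\rho^{\,k-\lambda}=1$, it is Ahlfors regular of dimension $k-\lambda$; hence $\mathcal C_{k,\lambda}=\{0\}\times C_\gamma$ has codimension $\lambda$ in $\mathbb R^k$, both globally and at every scale:
\[
\bigl|\{x'\in B^k_R(\zeta'):\ D(x')<\delta\}\bigr|\ \approx\ R^{\,k-\lambda}\,\delta^{\lambda}
\qquad\text{for }0<\delta\le R,\ \ \zeta'\in\mathcal C_{k,\lambda},
\]
and in particular $|\{D<\delta\}|\approx\delta^{\lambda}$ for small $\delta$; this is exactly where the precise value of $\gamma$ enters. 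Granting it, the lower bound is quick: since $\xi\equiv1$ on $[0,1/2)$ and $D^{-\alpha}-c^\alpha\gtrsim D^{-\alpha}$ on $\{D<c/2\}$, a layer-cake computation gives
\[
\int_{B_1(0)}|u|^{q}\,dx\ \gtrsim\ \int_{\{D<c/2\}}D(x')^{-\alpha q}\,dx'\ \approx\ \int_0^{c/2}\delta^{-\alpha q}\,\delta^{\lambda-1}\,d\delta,
\]
which diverges precisely because $\alpha\geq\lambda/q$ by \eqref{hyp.alpha.counterexamples.intro.statement.thm}.

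For the upper bound I would estimate $\opnorm{\nabla u}_{p,\lambda;B_1(0)}^{p}=\sup_{y}\int|\nabla u|^{p}\,|x-y|^{\lambda-n}\,dx$ in two stages. First, a reduction to base points on $S$: splitting the integral at $|x-y|=\tfrac12\,\mathrm{dist}(y,S)$, the inner part is $\lesssim\mathrm{dist}(y,S)^{\lambda-(\alpha+1)p}$, which is bounded (and tends to $0$) because $(\alpha+1)p<\lambda$, while on the outer part $|x-y|\approx|x-\zeta|$ for a nearest point $\zeta\in S$; so it remains to bound $\int|\nabla u|^{p}\,|x-\zeta|^{\lambda-n}\,dx$ uniformly over $\zeta\in S$. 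Writing $\zeta=(\zeta',\zeta'')$ and integrating out $x''$ first, the bounded integral $\int_{\mathbb R^{n-k}}(1+|z|^{2})^{(\lambda-n)/2}\,dz<\infty$ (finite since $\lambda<k$) leaves a quantity $\lesssim\int_{B^k_C(\zeta')}D(x')^{-(\alpha+1)p}\,|x'-\zeta'|^{\lambda-k}\,dx'$; decomposing into dyadic annuli $\{2^{-j-1}\le|x'-\zeta'|<2^{-j}\}$ and applying the scale-localized neighbourhood estimate on each annulus bounds it by $\sum_{j}(2^{-j})^{\lambda-(\alpha+1)p}<\infty$, once more by $(\alpha+1)p<\lambda$, with constants independent of $\zeta$. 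The $\xi'$-term carries one fewer power of $D$ and is handled identically, needing only $\alpha p<\lambda$; and the integer case is the special case $\mathcal C_{k,\lambda}=\{0\}$, $k=\lambda$, where the neighbourhood estimates are trivial.

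The main obstacle is thus concentrated in the non-integer case: one must establish the two-sided, scale-localized volume bound for neighbourhoods of the fractal $\mathcal C_{k,\lambda}$ — via a careful box-counting argument for the self-similar Cantor set $C_\gamma$ — and then push the dyadic summation through with the borderline exponents, since the hypothesis $\lambda/q\le\alpha<(\lambda-p)/p$ of \eqref{hyp.alpha.counterexamples.intro.statement.thm} is used tightly at both ends (on the left for the divergence of the $L^q$ integral, on the right for the convergence of the triple norm). In particular the neighbourhood estimate has to be proved with the sharp power $\delta^{\lambda}$, uniformly down to arbitrarily small scales.
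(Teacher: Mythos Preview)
Your approach is correct and takes a genuinely different route from the paper's. For the triple-norm bound in the non-integer case, the paper works directly with the gap structure of $C_\gamma$: it first uses the monotonicity Lemma~\ref{lemma.monotonicity} to move $y$ onto the axis $\{0\}\times[-1/2,1/2]$ (Lemma~\ref{lemma.reductions}), then decomposes $[-1/2,1/2]\setminus C_\gamma$ into the explicit gap intervals $G_{l,m}$, estimates the integral over each gap by hand (Lemmas~\ref{lemma.integral.distance.and.y} and~\ref{lemma.integral.only.distance}), and sums after clustering the $2^{l-1}$ gaps of generation $l$ according to their distance from $y''$. You instead package the self-similarity of $C_\gamma$ into the Ahlfors-regularity statement $|\{x'\in B_R^k(\zeta'):D(x')<\delta\}|\approx R^{\,k-\lambda}\delta^{\lambda}$, reduce to $y=\zeta\in S$ by a near/far split, integrate out $x''$, and finish with a layer-cake plus dyadic-annulus summation. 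Your argument is more conceptual and would apply verbatim with $C_\gamma$ replaced by any Ahlfors $(k-\lambda)$-regular subset of $[-1/2,1/2]$; the paper's argument is more explicit and self-contained, avoiding a separate Ahlfors-regularity lemma by exploiting the exact combinatorics of the Cantor construction. One small caveat: your claim that ``the integer case is the special case $k=\lambda$'' of the same scheme is not quite right, because when $k=\lambda$ the integral $\int_{\mathbb R^{n-k}}(1+|z|^2)^{(\lambda-n)/2}\,dz$ diverges logarithmically, so integrating out $x''$ produces a factor $1+\bigl|\log|x'-\zeta'|\bigr|$ rather than a constant; the extra logarithm is harmless for the final estimate (since $\lambda-(\alpha+1)p>0$ strictly), but it must be tracked---the paper does exactly this in the proof of Proposition~\ref{prop.integer.case}.
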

 
 We emphasize that the counterexample to the embedding is therefore given by a function that, up to a cutoff, only depends on $k$ variables, where $k$ is the smallest integer such that $\lambda\leq k$.

\begin{remark}\label{remark.introduction.Cantor.sets}
 The generalized Cantor set $C_{\gamma}$ (see \cite{Falconer}) is obtained from the interval $[-1/2,1/2]$ by removing at iteration $j=1,2,\ldots$ the central interval of length $\gamma\,l_{j-1}$ from each remaining segment of length $l_{j-1}=((1-\gamma)/2)^{j-1}$; 
see Figure \ref{figure.gencantor} in Section~\ref{section.lambda.noninteger}. The usual Cantor set corresponds to $\gamma=1/3$.
The reason for our choice of $\gamma$ is that
the Hausdorff dimension of $C_{\gamma}$ is 
\[
\frac{-\log{2}}{\log{\frac{1-\gamma}{2}}}=k-\lambda\in(0,1)
\]
 (see \cite[Theorem 9.3]{Falconer}).
 In particular, 
 letting $\lambda$ range from $k-1$ to $k$ yields any fractal dimension between 0 and 1, and \eqref{counterexample.noninteger.intro.statement.thm} somehow interpolates between the integer cases $\lambda=k-1$ and $\lambda=k$.
\end{remark}

Let us describe briefly how we found that $p_1$ is the optimal exponent.
In the case when $\lambda$ is an integer, the hint came from the number $ p_1=\lambda p/(\lambda-p)$, which can be thought of as the Sobolev exponent in dimension $\lambda$. It was natural then to choose the function \eqref{counterexample.noninteger.general.statement.thm}, since it gives a counterexample for the Sobolev inequality in $\mathbb{R}^\lambda$ when
$q>\lambda p/(\lambda-p)=p_1$ and the exponent $\alpha$ is chosen appropriately.

When $\lambda\in\mathbb{Z}$, \eqref{counterexample.noninteger.general.statement.thm} is basically a negative power of the distance to a subspace of dimension $n-\lambda$.
Therefore, when $\lambda\notin\mathbb{Z}$, 
 a negative power of the distance to a set of Hausdorff dimension $n-\lambda$ became a natural candidate
to counterexample. 
 This is what the function in \eqref{counterexample.noninteger.intro.statement.thm} basically is, 
 a power of the distance to $\mathcal{C}_{k,\lambda}\times \mathbb{R}^{n-k}$.
 
It may be of interest to recall here the solutions 
found by R. Schoen and S.-T. Yau in~\cite[Section 5]{Schoen-Yau} for nonlinear equations with critical exponent. 
They construct weak solutions which are singular on a Cantor set with fractional Hausdorff dimension; 
see~\cite[Page 70]{Schoen-Yau}. Obviously, nonlinear equations with critical exponent 
are closely related to the Sobolev embedding. 
Another result on solutions with a singular set of Cantor type is due to Fonseca, Mal\'y, and 
Mingione~\cite{Fonseca-Maly-Mingione}, a paper that concerns the minimizers of a certain
scalar, convex, and regular Lagrangian.

The paper \cite{Adams.Lewis.1982} by Adams and Lewis was brought to our attention after the completion
of the current article. In~ \cite{Adams.Lewis.1982}, the authors proved that functions 
which satisfy an integrability condition of Morrey-Besov type belong also to a certain 
Lorentz space. In addition, they construct examples of functions to show that their embeddings
are the best possible.
The Morrey-Besov norm is  a fractional  Morrey-type condition involving the $\alpha$-th difference quotients 
of a function, where $0<\alpha<1$. Thus, this concerns more exotic norms than the 
basic and standard ones that we treat. The authors of~\cite{Adams.Lewis.1982} mention the possibility that the ideas  
in their proof of \cite[Theorem 3]{Adams.Lewis.1982} could be extended from the case $\alpha\in(0,1)$ 
that they treat to the case $\alpha=1$ (and $p=q$ in their paper). This is something that we have not explored.
There could be the usual delicate issues taking limits of fractional integral norms as $\alpha\to 1$, or 
even simply the impossibility of taking this limit or adapting the proof for $\alpha=1$.
However, if this could be done, it would show that  actually $q= p_1$ is the largest possible exponent 
in \eqref{estimate.intro.morrey}. On the other hand, it is not 
clear at all if their example  would allow to recover our result on the optimal exponent for the embedding  
\eqref{estimate.intro.triple} concerning the ``triple'' norm.

Among radial functions, the optimal ranges of exponents in inequalities \eqref{estimate.intro.morrey} and \eqref{estimate.intro.triple} are strictly larger
than those of Theorems \ref{Adams.original.result} and \ref{intro.counterexamples}. 
This is the content of the following result, where we show that the exponent $q$ can go up to $q_2$. 
Interestingly, here the answer is different for the Morrey and the ``triple" norms: 
we prove that \eqref{estimate.intro.morrey} is false for $q= p_2$ while \eqref{estimate.intro.triple}
holds for this exponent. Here we can include the exponent $p=1$.

\begin{theorem}\label{main.thm.radial}
 Let $p,\lambda\in\mathbb{R}$ satisfy $1\leq p<\lambda<n$, and let $p_2:=n p/(\lambda-p)$. 
\begin{enumerate}

 \item[(a)] For every $1\leq q<p_2$ and all radially symmetric 
 ${C}^1$ functions $u$ vanishing on $\partial B_1(0)$, we have
 \begin{equation}\label{lemma.radial.estimate.Morrey}
\|u\|_{L^{q}(B_1(0))}\leq C\, \|\nabla u\|_{M^{p,\lambda}(B_1(0))},
\end{equation}
where $C$ is a constant depending only on $n,p,\lambda$, and $q$. In addition, this embedding is false for $q\geq p_2$.

 \item[(b)] For all radially symmetric ${C}^1$ functions $u$ with compact support in $\mathbb{R}^n$, we have
 \begin{equation}\label{theorem.radial.estimate.triple}
\|u\|_{L^{p_2}(\mathbb{R}^n)}\leq C\, \opnorm{\nabla u}_{p,\lambda;\mathbb{R}^n},
\end{equation}
where $C$ is a constant depending only on $n,p,$ and $\lambda$. In addition, 
 $ p_2$ is the optimal exponent in this inequality.

%

 \end{enumerate}
\end{theorem}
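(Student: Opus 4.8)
The plan is to reduce everything to one–dimensional inequalities by writing a radial function as $u(x)=u(r)$ with $r=|x|$, so that $|\nabla u(x)|=|u'(r)|$, and to use that for balls centered at the origin the Morrey norm controls $\int_0^\rho|u'(s)|^p s^{n-1}\,ds$. For the upper bound in part (a): since $u$ vanishes on $\partial B_1(0)$, $|u(r)|\le\int_r^1|u'(s)|\,ds$, while choosing $y=0$ in the definition of the Morrey norm gives, with $M:=\|\nabla u\|_{M^{p,\lambda}(B_1(0))}$,
\[
\int_0^\rho|u'(s)|^p s^{n-1}\,ds\le \frac{M^p}{|\mathbb{S}^{n-1}|}\,\rho^{n-\lambda}\qquad\text{for all }\rho\in(0,1].
\]
I would split $[r,1]$ into dyadic annuli $I_j=[2^{-j-1},2^{-j}]$ with $0\le j\le k$ and $2^{-k-1}\le r<2^{-k}$, bound $\int_{I_j}|u'|\,ds$ by Hölder's inequality against the weight $s^{n-1}\,ds$ together with the displayed estimate (this contributes a factor $2^{j(\lambda-p)/p}$), and sum the resulting geometric series, whose ratio $2^{(\lambda-p)/p}$ exceeds $1$ since $\lambda>p$; the case $p=1$ needs no Hölder and gives the same bound. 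This yields the pointwise estimate $|u(r)|\le C M\, r^{(p-\lambda)/p}$, and integrating $|u(r)|^q r^{n-1}$ over $(0,1)$ gives \eqref{lemma.radial.estimate.Morrey}, the integral being finite exactly when $q(\lambda-p)/p<n$, i.e. $q<p_2$.

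For the failure of \eqref{lemma.radial.estimate.Morrey} when $q\ge p_2$, and for the optimality statements, I would use explicit power-type functions. Take $u(r)=(r^{-\beta}-1)_+$ with $\beta=(\lambda-p)/p$: since $(\beta+1)p=\lambda<n$, a short computation distinguishing balls near the singularity from balls far from it shows $\|\nabla u\|_{M^{p,\lambda}(B_1(0))}<\infty$, whereas $\int_0^1|u|^q r^{n-1}\,dr=\infty$ for every $q\ge p_2$. This $u$ is only Lipschitz, so to contradict \eqref{lemma.radial.estimate.Morrey} with a uniform constant I would truncate the singularity at scale $\varepsilon$ to obtain $C^1$ functions $u_\varepsilon$ with $\|\nabla u_\varepsilon\|_{M^{p,\lambda}}$ bounded and $\|u_\varepsilon\|_{L^q}\to\infty$. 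For the optimality of $p_2$ in part (b) this same function does not work, because taking $y=0$ in \eqref{triple} already forces $\int_{B_1(0)}|x|^{-n}\,dx=\infty$; instead one takes $u(r)=(r^{-\beta'}-1)_+$ with $n/q\le\beta'<\beta$ (possible precisely because $q>p_2$), for which $(\beta'+1)p<\lambda$ makes $\opnorm{\nabla u}_{p,\lambda;\mathbb{R}^n}<\infty$ (the borderline $\mu:=(\beta'+1)p<\lambda$ guarantees that $\int_{B_1(0)}|x|^{-\mu}|x-y|^{\lambda-n}\,dx$ is bounded uniformly in $y$), while $\int|u|^q r^{n-1}\,dr=\infty$; once more one truncates to get genuinely $C^1$ examples.

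For the upper bound in part (b), $u$ has compact support, so $|u(r)|\le\int_r^\infty|u'(s)|\,ds$, and the choice $y=0$ in \eqref{triple} gives $\int_0^\infty|u'(s)|^p s^{\lambda-1}\,ds\le\opnorm{\nabla u}_{p,\lambda;\mathbb{R}^n}^{p}/|\mathbb{S}^{n-1}|$. Hence \eqref{theorem.radial.estimate.triple} follows once we establish the one–dimensional weighted inequality
\[
\Big(\int_0^\infty\Big(\int_r^\infty g(s)\,ds\Big)^{p_2}r^{n-1}\,dr\Big)^{1/p_2}\le C\Big(\int_0^\infty g(s)^p s^{\lambda-1}\,ds\Big)^{1/p},\qquad g\ge 0,
\]
which, since $p\le p_2$, is a Hardy-type inequality for the operator $g\mapsto\int_r^\infty g$. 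I would verify the classical Muckenhoupt–Bradley criterion for it: both $\int_0^r t^{n-1}\,dt$ and $\int_r^\infty s^{(1-\lambda)/(p-1)}\,ds$ converge (here $\lambda>p$ is used for the second one), and when one forms $\big(\int_0^r t^{n-1}\,dt\big)^{1/p_2}\big(\int_r^\infty s^{(1-\lambda)/(p-1)}\,ds\big)^{1/p'}$ the powers of $r$ cancel, the sum of exponents being $n/p_2+(p-\lambda)/p=0$; for the endpoint $p=1$ one uses the corresponding $p'=\infty$ version. Equivalently, the substitution $t=r^n/n$ turns the inequality into a scale-invariant Bliss-type inequality $\|\tilde h\|_{L^{p_2}(\mathbb{R}_+)}\le C\,\||\tilde h'|\,t^{\theta/p}\|_{L^p(\mathbb{R}_+)}$, which is classical. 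The optimality of $p_2$ here is the counterexample just described.

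The routine parts are the upper bound in (a) and the counterexamples, which only involve dyadic estimates and scaling. The genuinely delicate point is the upper bound in part (b): it is a borderline, critical-exponent estimate, so a crude pointwise bound on $u$ obtained via Hölder's inequality is \emph{not} enough (it diverges logarithmically), and one really needs the sharp one-dimensional Hardy/Bliss inequality above — this is precisely what makes the exponent $q=p_2$ (and not merely every $q<p_2$) admissible for the triple norm, in contrast with the Morrey norm.
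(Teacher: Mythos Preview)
Your proposal is correct. Part (a) and the counterexamples follow essentially the paper's approach: the same dyadic splitting of $[r,1]$ with H\"older against $s^{n-1}\,ds$, and the same power-type functions $u(r)=(r^{-\beta}-1)_+$ with the appropriate exponents. One minor difference is that the paper controls the uniform-in-$y$ bounds (for both the Morrey and the triple norm of the counterexamples) via a reflection/monotonicity lemma showing that the supremum is attained at $y=0$, rather than by your case distinction on balls near or far from the singularity; both arguments are short.

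For the upper bound in (b) your route genuinely differs. You reduce to the same one-dimensional inequality but then verify the Muckenhoupt--Bradley criterion for the weighted Hardy operator $g\mapsto\int_r^\infty g$, exploiting that the exponents $n/p_2$ and $(p-\lambda)/p$ cancel. The paper instead performs the change of variables $r=s^a$ with $a=p/(n-\lambda+p)$, which equalizes the weights on the two sides (both become $s^{an-1}$) and reduces the inequality to a Bliss-type estimate quoted from Talenti; it also gives a second, self-contained proof via symmetric decreasing rearrangement and Cavalieri's principle for $p=1$, bootstrapped to general $p$ by applying the $p=1$ case to $u^b$. Your Hardy-criterion approach is more direct and avoids external references; the paper's approach in exchange identifies the extremal profiles for the one-dimensional inequality (though it stops short of claiming the best constant for \eqref{theorem.radial.estimate.triple} itself, since for those extremals it is not clear that $y=0$ realizes the supremum in the triple norm).
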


The paper is organized as follows.
{In Section \ref{section.monotonicity}, we prove a monotonicity result, Lemma \ref{lemma.monotonicity}, that 
 we will use several times throughout the paper to optimize the location of the ``singularities" $y$ in the Morrey and ``triple" norms, both in the radial and non-radial cases.} In Section \ref{section.radial} we prove the embeddings for radial functions, Theorem \ref{main.thm.radial}. 
In Section \ref{section.embeddings.general} we provide for the reader's convenience D. R. Adams' \cite{Adams.1975} proof of Theorem \ref{Adams.original.result} in the general case of non-radial functions. In Sections \ref{section.lambda.integer} and \ref{section.lambda.noninteger} we prove Theorem \ref{intro.counterexamples} on the optimality of the embeddings. 
We consider separately the case when $\lambda$ is an integer in Section \ref{section.lambda.integer} (for its simplicity) and the case when $\lambda$ is a non-integer in Section \ref{section.lambda.noninteger} (which is much more involved).

\noindent{\bf Notation.}
In the sequel $B_R^{(m)}(x)$ denotes the  open ball in $\mathbb{R}^m$ of radius $R$ centered at~$x$. For simplicity, whenever $m$ or $x$ are omitted, we will consider $m=n$ and $x=0$ respectively. By $C$, we denote  constants that may change from line to line.
For points in $\mathbb{R}^n$, we will write $x=(x',x'')\in\mathbb{R}^{k}\times\mathbb{R}^{n-k}$ for $k$ a positive integer specified from the context.
Given a function $u$, $u_+=\max\{u,0\}$ is its positive part.  
As mentioned before, we will denote $ p_1=\lambda p/(\lambda-p)$ and $ p_2=n p/(\lambda-p)$.  For convenience,  we will use the following standard notation  for intervals: $a(-b,b)+h=(h-ab,h+ab)$.
Finally, $\textrm{dist}\left(t, U\right)=\inf_{z\in U}|t-z|$ as usual.



\section{On the location of the singularity in the ``triple" norm}\label{section.monotonicity}
{
In this section we prove a monotonicity result that we will use several times in the sequel to study which locations of the ``singularity" $y$ make larger the integral in the  ``triple norm"
\[
\opnorm{\nabla u}_{p,\lambda;\Omega}^{p} =\sup_{y\in \overline\Omega}\int_{\Omega} |\nabla u(x)|^{p} \,|x-y|^{\lambda-n}\, dx.
\]

\begin{lemma}\label{lemma.monotonicity}
Consider a domain $\Omega\subset\mathbb{R}^n$, convex in the $e_1$ direction, and symmetric with respect to $\{z_1=0\}$. 
Let $J:\mathbb{R}^n\to\mathbb{R}$ be given by
\[
 J(y):=
\int_{\Omega} 
h(z)|z-{y}|^{-\theta}\, dz
\]
with $\theta>0$ and $h$ a non-negative function in $\Omega$. Then:

\begin{enumerate}

\item[(a)] $J $ is non-increasing with respect to ${y}_1$ in $\{y_1\geq \sup_{z\in\Omega}z_1\}$, and non-decreasing with respect to ${y}_1$ in $\{y_1\leq \inf_{z\in\Omega}z_1\}$.

 \item[(b1)] Suppose that the non-negative function $h$ satisfies that, 
 for some $\eta\in[0,\sup_{z\in\Omega}z_1)$ and every $y_1\in[\eta,\sup_{z\in\Omega}z_1)$, 
\[
h(z^*)\geq h(z)\quad\textrm{for all $z\in\Omega\cap\{z_1\geq y_1 \}$},
\]
where $z^*=(2y_1-z_1,z')\in\mathbb{R}\times\mathbb{R}^{n-1}$ is the reflection of $z$ with respect to the hyperplane $\{z_1={y_1}\}$.
Then, $J $ is non-increasing with respect to ${y}_1$ in $\{\eta\leq y_1< \sup_{z\in\Omega}z_1\}$.

 \item[(b2)] On the other hand, if the non-negative function $h$ is such that,
 for some $\eta\in(\inf_{z\in\Omega}z_1,0]$ and every $y_1\in(\inf_{z\in\Omega}z_1,\eta]$, 
\[
h(z^*)\geq h(z)\quad\textrm{for all $z\in\Omega\cap\{z_1\leq y_1 \}$}
\]
with $z^*$ as before, then $J $ is non-decreasing with respect to ${y}_1$ in $\{\inf_{z\in\Omega} z_1<y_1\leq \eta\}$.

\end{enumerate}
\end{lemma}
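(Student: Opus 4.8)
The plan is to prove (a) by elementary pointwise monotonicity of the integrand and (b1)--(b2) by a reflection argument, in the spirit of moving planes, across an auxiliary hyperplane $\{z_1=c\}$: the geometry of $\Omega$ keeps the reflected set inside $\Omega$, while the monotonicity hypothesis on $h$ controls the change of $h$ under the reflection.

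For (a), fix $y'=(y_2,\dots,y_n)$ and let $M:=\sup_{z\in\Omega}z_1$. If $M\le a<b$, then $0\le a-z_1\le b-z_1$ for every $z\in\Omega$, hence $|z-(a,y')|\le|z-(b,y')|$ and, since $\theta>0$, $|z-(a,y')|^{-\theta}\ge|z-(b,y')|^{-\theta}$; integrating against $h\ge0$ gives $J(a,y')\ge J(b,y')$, which is the claimed monotonicity. The statement for $y_1\le\inf_{z\in\Omega}z_1$ follows in the same way, exchanging the roles of $a$ and $b$. (No use of convexity or symmetry of $\Omega$ is needed here.)

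For (b1), fix $y'$, abbreviate $J(t):=J(t,y')$, and take $\eta\le a<b<M$; we must show $J(a)\ge J(b)$. Put $c:=\tfrac{a+b}{2}\in(\eta,M)$ and let $R$ be the reflection $Rz=\hat z:=(2c-z_1,z')$ across $\{z_1=c\}$. I would first record three facts. \emph{(i) Kernel comparison:} for $z_1\le c$ one has $|z_1-a|\le|z_1-b|$, hence $p_a(z):=|z-(a,y')|^{-\theta}\ge p_b(z):=|z-(b,y')|^{-\theta}$. \emph{(ii) Reflected kernel:} since $2c-a=b$ and $2c-b=a$, for every $z$ one has $p_a(\hat z)=p_b(z)$ and $p_b(\hat z)=p_a(z)$. \emph{(iii) The reflection carries the far part into $\Omega$:} if $z\in\Omega$ with $z_1>c\ge 0$, then $2c-z_1\in[-z_1,z_1]$, and since $(z_1,z')\in\Omega$, the symmetry of $\Omega$ about $\{z_1=0\}$ gives $(-z_1,z')\in\Omega$, whence convexity of $\Omega$ in the $e_1$ direction gives $\hat z\in\Omega$, with $\hat z_1<c$. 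Consequently $R$ maps $P:=\Omega\cap\{z_1>c\}$, preserving measure, into $Q:=R(P)\subseteq M_c:=\Omega\cap\{z_1\le c\}$. Writing $M_c$ as the disjoint union of $Q$ and $Q':=M_c\setminus Q$, and using (ii) to transport $\int_P h\,p_a$ and $\int_P h\,p_b$ onto $Q$, a short computation gives
\[
J(a)-J(b)=\int_{Q'}h\,(p_a-p_b)\,dz+\int_{Q}\bigl(h(z)-h(\hat z)\bigr)\bigl(p_a(z)-p_b(z)\bigr)\,dz.
\]
Both integrands are nonnegative: since $Q'\subseteq M_c$ and $Q\subseteq M_c$, one has $p_a-p_b\ge0$ there by (i); and $h(z)-h(\hat z)\ge0$ for $z\in Q$ by the hypothesis on $h$ with $y_1=c$, applied to the point $\hat z\in\Omega\cap\{z_1>c\}$ whose reflection across $\{z_1=c\}$ is $z$. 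Hence $J(a)\ge J(b)$. Part (b2) is entirely symmetric: for $\inf_{z\in\Omega}z_1<a<b\le\eta$ one reflects across $\{z_1=c\}$ with $c=\tfrac{a+b}{2}<\eta\le0$, maps $\Omega\cap\{z_1<c\}$ into $\Omega\cap\{z_1\ge c\}$, and obtains $J(a)\le J(b)$ by the same chain of inequalities.

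The main obstacle, and the only place where the structural assumptions on $\Omega$ enter, is the verification that $\hat z\in\Omega$ in (iii): symmetry about $\{z_1=0\}$ produces the point $(-z_1,z')$, convexity in the $e_1$ direction fills in the whole segment joining it to $(z_1,z')$, and the sign $c\ge0$ — which is precisely why one needs $\eta\ge0$ in (b1) (resp. $\eta\le0$ in (b2)) — guarantees $2c-z_1\ge -z_1$. A second, smaller point is aligning the auxiliary reflection across $\{z_1=c\}$ with the hypothesis, which is phrased for reflections across $\{z_1=y_1\}$; this is handled by taking $y_1=c$ and using that $R$ is an involution. Finally, one should note that $J$ is finite, so the manipulations above are legitimate; this holds in every application of the lemma, where $\theta=n-\lambda<n$ and $h$ is bounded with compact support, and in general one may argue under the assumption $J(b)<\infty$, the opposite case being immediate or treated similarly.
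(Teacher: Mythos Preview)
Your proof is correct and rests on the same reflection idea as the paper's, but the implementation differs in a way worth noting. The paper differentiates $J$ in $y_1$ and shows $\partial_{y_1}J(y)\le 0$ by splitting $\Omega$ along $\{z_1=y_1\}$ and reflecting $\Omega\cap\{z_1\ge y_1\}$ across that hyperplane; the reflected integrand dominates pointwise by the hypothesis on $h$, so the positive part of the derivative is absorbed by the negative part. You instead compare $J(a)$ and $J(b)$ directly by reflecting across the midplane $\{z_1=c\}$, $c=\tfrac{a+b}{2}$, and organize $J(a)-J(b)$ into two manifestly nonnegative pieces. Your version avoids differentiating under the integral sign (and the attendant question of when that is justified), and you spell out explicitly why the reflected set lands back in $\Omega$---this is exactly where the convexity and symmetry hypotheses enter, a point the paper uses but does not make explicit. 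Conversely, the paper's differential formulation is a bit shorter and makes the structure of the inequality transparent at a single value of $y_1$. In substance the two arguments are the same moving-planes computation.
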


\begin{proof}
Observe first that for every $z\in\Omega$, the quantity $|z-{y}|$ is 
increasing with respect to ${y}_1$ in $\{y_1\geq \sup_{z\in\Omega}z_1\}$, and decreasing with respect to ${y}_1$ in $\{y_1\leq \inf_{z\in\Omega}z_1\}$.
 Since $h\geq0$ and $\theta>0$, we deduce that $J$ is non-increasing with respect to ${y}_1$ in $\{y_1\geq \sup_{z\in\Omega}z_1\}$, and non-decreasing with respect to ${y}_1$ in $\{y_1\leq \inf_{z\in\Omega}z_1\}$. This proves part (a).

\begin{figure}[t]
\setlength{\unitlength}{1.3cm}
\begin{picture}(6,6.5)(-3,-3)
\put(0,0){\vector(1,0){3}}
\put(0,0){\line(-1,0){3}}
\put(0,0){\vector(0,1){3}}
\put(0,0){\line(0,-1){3}}

\qbezier(0.00,2.50)(0.50,2.50)(0.96,2.31)
\qbezier(0.96,2.31)(1.42,2.12)(1.77,1.77)
\qbezier(1.77,1.77)(2.12,1.42)(2.31,0.96)
\qbezier(2.31,0.96)(2.50,0.50)(2.50,0.00)
\qbezier(-0.00,2.50)(-0.50,2.50)(-0.96,2.31)
\qbezier(-0.96,2.31)(-1.42,2.12)(-1.77,1.77)
\qbezier(-1.77,1.77)(-2.12,1.42)(-2.31,0.96)
\qbezier(-2.31,0.96)(-2.50,0.50)(-2.50,0.00)
\qbezier(0.00,-2.50)(0.50,-2.50)(0.96,-2.31)
\qbezier(0.96,-2.31)(1.42,-2.12)(1.77,-1.77)
\qbezier(1.77,-1.77)(2.12,-1.42)(2.31,-0.96)
\qbezier(2.31,-0.96)(2.50,-0.50)(2.50,-0.00)
\qbezier(-0.00,-2.50)(-0.50,-2.50)(-0.96,-2.31)
\qbezier(-0.96,-2.31)(-1.42,-2.12)(-1.77,-1.77)
\qbezier(-1.77,-1.77)(-2.12,-1.42)(-2.31,-0.96)
\qbezier(-2.31,-0.96)(-2.50,-0.50)(-2.50,-0.00)

\qbezier(1.77,1.77)(1.42,1.42)(1.23,0.96)
\qbezier(1.23,0.96)(1.04,0.50)(1.04,0.00)
\qbezier(1.77,-1.77)(1.42,-1.42)(1.23,-0.96)
\qbezier(1.23,-0.96)(1.04,-0.50)(1.04,0.00)

\multiput(1.77,-1.85)(0,.35){11}{\line(0,1){.2}} 

\put(1.77,0){\line(1,3){.3}}
\put(1.77,0){\line(-1,3){.3}}
\put(1.47,.9){\line(1,0){.6}}

\put(0,0){\circle*{.1}} 
\put(2.07,0.9){\circle*{.1}} 
\put(1.47,0.9){\circle*{.1}} 
\put(1.77,0){\circle*{.1}} 

\put(-.3,-.3){$0$}
\put(3,-.3){$z_1$}
\put(1.35,1.04){$z^*$}
\put(2,1.04){$z$}
\put(1.46,-.3){$y_1$}

\end{picture}
\caption{Monotonicity argument in the proof of Lemma \ref{lemma.monotonicity}.}
\label{figure.lemma.monotonicity.radial}
\end{figure}

Assume now that $0\leq\eta\leq{y}_1<\sup_{z\in\Omega}z_1$ and 
compute
\[
\begin{split}
\partial_{{y}_1}J(y)
&=\theta
\int_{\Omega} 
h(z)(z_1-{y}_1)|z-{y}|^{-\theta-2}
\, dz
\\
&=\theta
\Bigg\{
\int_{\Omega\cap\{z_1\geq y_1 \}}
h(z)(z_1-{y}_1)|z-{y}|^{-\theta-2}
 \, dz
\\
&\qquad+
\int_{\Omega\cap\{z_1\leq y_1 \}}
h(z)(z_1-{y}_1)|z-{y}|^{-\theta-2}
 \, dz
 \Bigg\}.
\end{split}
\]
For every $z\in\Omega\cap\{z_1\geq y_1 \}$, let $z^*=(2{y}_1-z_1,z')$ be its reflection with respect to the hyperplane $\{z_1={y}_1\}$; 
see Figure \ref{figure.lemma.monotonicity.radial}. 
Then, $|z^*-{y}|=|z-{y}|$, while $h(z^*)\geq h(z)$ by hypothesis. Therefore, for every $z\in\Omega\cap\{z_1\geq y_1 \}$, we have
\[
h(z)(z_1-{y}_1)|z-{y}|^{-\theta-2}
 \leq
- h(z^*)(z_1^*-{y}_1)|z^*-{y}|^{-\theta-2}
\]
and hence, using that $h\geq0$,
\[
\begin{split}
\int_{\Omega\cap\{z_1\geq y_1 \}}&\,
h(z)(z_1-{y}_1)|z-{y}|^{-\theta-2}\, dz
 \\
&\leq
-\int_{\left(\Omega\cap\{z_1\geq y_1 \}\right)^*}
h(z)(z_1-{y}_1)|z-{y}|^{-\theta-2}\, dz
 \\
&\leq - \int_{\Omega\cap\{z_1\leq y_1 \}}
h(z)(z_1-{y}_1)|z-{y}|^{-\theta-2}\, dz.
\end{split}
\]
Therefore, $\partial_{{y}_1}J(y)\leq0$ for all $\eta\leq{y}_1<\sup_{z\in\Omega}z_1$ and the conclusion in part (b1) follows.  The statement for $\inf_{z\in\Omega}z_1<{y}_1\leq\eta$ in part (b2) follows from (b1) by reflection.
\end{proof}


\section{The radial case: Proof of Theorem \ref{main.thm.radial}}\label{section.radial}

In this section we  establish Theorem \ref{main.thm.radial} on the embeddings for radial functions. In the proof we apply Lemma \ref{lemma.monotonicity} (the monotonicity result proved in Section \ref{section.monotonicity}), which will also be used for the non-radial case in Section \ref{section.lambda.noninteger}. We point out that the Sobolev inequalities with monomial weights established in \cite{Cabre.Ros-Oton} by Ros-Oton and the first author will be of great use.

\begin{proof}[Proof of Theorem \ref{main.thm.radial}]
We structure the proof in four parts. In Part 1 we establish estimate \eqref{lemma.radial.estimate.Morrey}, while  in Part 2 we show that $q< p_2$ is the optimal range of exponents for this estimate.
In Part 3a we prove \eqref{theorem.radial.estimate.triple};
 here we will use the results of \cite{Cabre.Ros-Oton}. Part 3b provides an alternative proof of \eqref{theorem.radial.estimate.triple}.
Finally, we show in Part 4 that $p_2$ is the largest exponent for which \eqref{theorem.radial.estimate.triple} holds.

\medskip

\noindent{\it Part 1.}\quad{}We proceed now to show estimate \eqref{lemma.radial.estimate.Morrey} for $1\leq q<p_2$. All the constants $C$ will depend only on $n,p,\lambda$, and $q$.
On the one hand we have
\[
\begin{split}
 &\|u\|_{L^{q}(B_1(0))}^q
= C
\int_0^1
|u( r )|^q\,r^{n-1}
\,dr
\\
&\leq C
\int_0^1
\left(
\int_r^1
|u'(s)|
\,ds
\right)^q\,r^{n-1}
\,dr
= C
\sum_{j=1}^{\infty}
\int_{2^{-j}}^{2^{1-j}}
\left(
\int_r^1
|u'(s)|
\,ds
\right)^q\,r^{n-1}
\,dr
\\
&\leq C
\sum_{j=1}^{\infty}
\int_{2^{-j}}^{2^{1-j}}
\left(
\int_{2^{-j}}^1
|u'(s)|
\,ds
\right)^q
\,r^{n-1}
\,dr
= C\,\frac{2^n-1}{n}
\sum_{j=1}^{\infty}
2^{-jn}
\left(
\int_{2^{-j}}^1
|u'(s)|
\,ds
\right)^q.
\end{split}
\]
Now, H\"older's inequality yields
\[
\begin{split}
\int_{2^{-j}}^1
|u'(s)|
\,ds
&=
\sum_{i=1}^{j}
\int_{2^{-i}}^{2^{1-i}}
|u'(s)|
\,ds
\leq
\sum_{i=1}^{j}
2^{-i\frac{p-1}{p}}
\left(\int_{2^{-i}}^{2^{1-i}}
|u'(s)|^p
\,ds\right)^\frac{1}{p}
\\
&\leq
\sum_{i=1}^{j}
2^{-i\frac{p-1}{p}+i\frac{n-1}{p}}
\left(\int_{2^{-i}}^{2^{1-i}}
|u'(s)|^p\,s^{n-1}
\,ds\right)^\frac{1}{p}
\\
&\leq
C
\sum_{i=1}^{j}
2^{-i\frac{p-n}{p}}
\left(
\int_{B_{2^{1-i}}(0)}
|\nabla u(x)|^p
\,dx
\right)^\frac{1}{p}
\\
&=
C
\sum_{i=1}^{j}
2^{-i\frac{p-n}{p}+(1-i)\frac{n-\lambda}{p}}
\left(2^{(1-i)(\lambda-n)}
\int_{B_{2^{1-i}}(0)\cap B_{1}(0)}
|\nabla u(x)|^p
\,dx
\right)^\frac{1}{p}
\\
&\leq
C
\sum_{i=1}^{j}
2^{i\frac{\lambda-p}{p}}
\|
\nabla u
\|_{M^{p,\lambda}(B_{1}(0))}
\leq
C\,
2^{j\frac{\lambda-p}{p}}
\|
\nabla u
\|_{M^{p,\lambda}(B_{1}(0))}.
\end{split}
\]
Therefore, we obtain
\begin{equation*}
\begin{split}
 \|u\|_{L^{q}(B_1(0))}^q
\leq C
\sum_{j=1}^{\infty}
2^{j\frac{q(\lambda-p)-np}{p}}
\|\nabla u\|_{M^{p,\lambda} (B_1(0))}^q,
\end{split}
\end{equation*}
and the series is convergent since $q< p_2$.

\medskip

\noindent{\it Part 2.}\quad{}In order to show that $q< p_2$ is the optimal range of exponents in estimate \eqref{lemma.radial.estimate.Morrey}, let $q\geq p_2$ and consider the function
$u_\alpha(x)=|x|^{-\alpha}-1$, extended by zero outside $B_1(0)$,
with $\alpha=(\lambda-p)/p$.
 Notice that $u_\alpha$ vanishes on $\partial B_1(0)$ and that $\|u_\alpha\|_{L^{q}(B_1(0))}=\infty$,
since $n\leq\alpha q$.



To show that 
$\|\nabla u_\alpha\|_{M^{p,\lambda} (B_1(0))}$
is finite, let $y\in \overline{B}_1(0)$ and $r>0$. Observe that we can write $-y$ instead of $y$ in the definition of the Morrey norm. Then,
\[
\begin{split}
\int_{B_1(0)\cap B_r(-y)} |\nabla u_\alpha(x)|^{p}\, dx
&=
C
\int_{B_1(0)\cap B_r(-y)} |x|^{-\alpha p-p}\, dx
\\
&\leq
C
\int_{ B_r(-y)} |x|^{-\alpha p-p}\, dx
=
C
\int_{B_r(0)} |z-y|^{-\alpha p-p}\, dz
\end{split}
\]
by the  change of variables $z=x+y$.
Notice that, upon a rotation, we can assume $y=y_1e_1=(y_1,0,\ldots,0)$ with $y_1\geq0$. Denote
\[
J(y_1)=
\int_{B_r(0)} |z-y_1e_1|^{-\alpha p-p}\, dz.
\]
{We can now apply Lemma \ref{lemma.monotonicity} (with $\eta=0$ and $h\equiv1$) and conclude that $ J$ is non-increasing in $[0,\infty)$. Therefore, $J(y_1)\leq J(0)$ for all $y_1\geq0$.}

As a consequence, we have that 
\[
r^{\lambda-n}
\int_{B_1(0)\cap B_r(-y)} |\nabla u_\alpha(x)|^{p}\, dx
\leq
C
r^{\lambda-n}
\int_{0}^{r}
s^{n-\alpha p -p-1}\,ds
\leq
C
\]
independently of $r$, by our choice of $\alpha$.

\medskip

\noindent{\it Part 3a.}\quad{}We give here a first proof of estimate \eqref{theorem.radial.estimate.triple}. Recall that $u$ has compact support. 
Since $u$ is radial, it suffices to show that 
\begin{equation}\label{triple.norm.radial.proof.lemma}
\left(\int_0^\infty | u( r )|^\frac{np}{\lambda-p}\, r^{n-1}\, dr\right)^\frac{\lambda-p}{np}
\leq C\,
\left(\int_0^\infty | u'(r)|^{p} \,r^{\lambda-1}\, dr\right)^\frac{1}{p}.
\end{equation}

In fact, we are going to prove \eqref{triple.norm.radial.proof.lemma} with the best constant.
For this, we perform the change of variables $r=s^a$ with $a=p/(n-\lambda+p)$ and $v(s)=u(s^a)$ in the integrals on both sides of \eqref{triple.norm.radial.proof.lemma}. We get 
\[
\int_0^\infty | u( r )|^\frac{np}{\lambda-p}\, r^{n-1}\, dr
=
a
\int_0^\infty | v( s )|^\frac{np}{\lambda-p}\, s^{an-1}\, ds
\]
and
\[
\int_0^\infty | u'(r)|^{p} \,r^{\lambda-1}\, dr
=
a^{1-p}
\int_0^\infty | v'(s)|^{p} \,s^{an-1}\, ds.
\]
Observe that after the change of variables, both integrals are weighted by the same power $an-1\geq0$.  In this way, \eqref{triple.norm.radial.proof.lemma} becomes a Bliss type inequality that was proved by Talenti~\cite{Talenti.1976}. From \cite[Lemma 2]{Talenti.1976} (or also from \cite[Theorem 1.3]{Cabre.Ros-Oton} applied in dimension~$1$ with $D=A_1+1=an$) we obtain that
\[
\left(\int_0^\infty | v( s )|^\frac{np}{\lambda-p}\, s^{an-1}\, ds\right)^\frac{\lambda-p}{np}
\leq C_p\,
\left(\int_0^\infty | v'(s)|^{p} \,s^{an-1}\, ds\right)^\frac{1}{p},
\]
with an explicit value of its best constant $C_p$.
Inequality \eqref{triple.norm.radial.proof.lemma}, and thus \eqref{theorem.radial.estimate.triple}, is now established.

Moreover, it is also shown in \cite[Lemma 2]{Talenti.1976} (see also \cite{Cabre.Ros-Oton}) that
when $1 < p < \lambda$ the constant $C_p$ is attained in $W_0^{1,p}(\mathbb{R},|s|^{an-1} ds)$, the closure of ${C}_{c}^{1}(\mathbb{R})$ under the norm $(\int_{\mathbb{R}} (|u|^p+|\nabla u|^p) |s|^{an-1} ds)^{1/p}$,
 by the functions 
\[
v(s)= \left(c_1+c_2|s|^\frac{p}{p-1}\right)^\frac{p-\lambda}{n+p-\lambda},
\]
where $c_1,c_2>0$ are arbitrary constants.
On the other hand, when $p = 1$, the constant $C_1$ is not attained by any function in $W_0^{1,1}(\mathbb{R},|s|^{an-1} ds)$.
 Note however, that knowing the best constant for \eqref{triple.norm.radial.proof.lemma} does not  ensure that we know the best  constant for \eqref{theorem.radial.estimate.triple}. Indeed, for 
\[
u_{c_{1},c_{2}}(x)= \left(c_1+c_2|x|^\frac{n+p-\lambda}{p-1}\right)^\frac{p-\lambda}{n+p-\lambda},\qquad c_{1},c_{2}>0,
\]
we have seen that
\[
\|u_{c_{1},c_{2}}\|_{L^{p_2}(\mathbb{R}^n)}
=
C
\int_{\mathbb{R}^n} |\nabla u_{c_{1},c_{2}}(x)|^{p} \,|x|^{\lambda-n}\, dx
\]
with the best constant  $C$, but on the other hand we do not know if the supremum in the ``triple norm" is attained at $y=0$. That is, we do not know if
 \begin{equation}\label{what.we.want}
\int_{\mathbb{R}^n} |\nabla u_{c_{1},c_{2}}(x)|^{p} \,|x|^{\lambda-n}\, dx
=
 \opnorm{\nabla u_{c_{1},c_{2}}}_{p,\lambda;\mathbb{R}^n}.
 \end{equation}
The  point here is that $|\nabla u_{c_1,c_2}|^p$ is zero at the origin, instead of blowing up as in the other cases that we consider in the paper. Therefore,
 \[
\int_{\mathbb{R}^n} |\nabla u_{c_{1},c_{2}}(x)|^{p} \,|x-y|^{\lambda-n}\, dx
\]
could perhaps be increasing in $|y|$ in an interval near the origin, and then decrease to 0 as $|y|\to\infty$. In this setting, the reflection argument in the proof of Lemma \ref{lemma.monotonicity} does not work and we cannot conclude \eqref{what.we.want} as before.

\medskip

\noindent{\it Part 3b.}\quad{}We will provide here an alternative proof of estimate \eqref{theorem.radial.estimate.triple}. First, we establish the case $p=1$ by proving
\begin{equation}\label{estimate.intro.triplenorm.radial}
\left(\int_0^\infty | u( r )|^\frac{n}{\lambda-1}\, r^{n-1}\, dr\right)^\frac{\lambda-1}{n}
\leq C\,
\int_0^\infty | u'(r)| \,r^{\lambda-1}\, dr
\end{equation}
for every $\lambda\in(1,n)$. One can then 
 deduce the general case applying \eqref{estimate.intro.triplenorm.radial} with $(n(p-1)+\lambda)/p$ instead of $\lambda$ to the function $u^b$ with $b=1+n(p-1)/(\lambda-p)$. Note that $1< (n(p-1)+\lambda)/p <n$.

To show \eqref{estimate.intro.triplenorm.radial}, notice that we can assume $u\geq0$. Furthermore, we can also assume that $u$ is radially decreasing. This follows from \cite[Proposition 4.2]{Cabre.Ros-Oton} (see also \cite{Talenti}) applied to inequality \eqref{estimate.intro.triplenorm.radial} after changing variables as in Part 3 in order to guarantee that both sides of the inequality are weighted by the same power. Now, on the one hand, the change of variables $t=u(r)$ yields
\begin{equation}\label{part1.radial.eq.phi}
\int_0^\infty | u'(r)| \,r^{\lambda-1}\, dr
=
-
\int_0^\infty u'(r) \,r^{\lambda-1}\, dr
=
\int_0^{\max(u)}
\varphi(t)^\frac{\lambda-1}{n}
\,dt
\end{equation}
for $\varphi(t)=\big|\{r:\ u( r)>t\}\big|^n$.
 On the other hand, by Cavalieri's principle
\begin{equation}\label{part2.radial.eq.phi}
\begin{split}
 \int_0^\infty u( r )^\frac{n}{\lambda-1}\, r^{n-1}\, dr
 =
 \frac{1}{n}
 \int_0^\infty
 \frac{n}{\lambda-1}\,
 t^{\frac{n}{\lambda-1}-1} \varphi(t)
 \,dt.
\end{split}
\end{equation}

We conclude by proving that 
\begin{equation}\label{lemma.decreasing.functions}
\int_0^\infty
q\,t^{q-1}\,\varphi(t)
\,dt
\leq
\left(
\int_0^\infty
\varphi(t)^\frac{1}{q}
\,dt
\right)^q
\end{equation}
for every non-increasing function $\varphi=\varphi(t)$ and every $q>1$. 
Consequently, \eqref{estimate.intro.triplenorm.radial} will follow from \eqref{part1.radial.eq.phi}, \eqref{part2.radial.eq.phi}, and \eqref{lemma.decreasing.functions}.
To prove inequality \eqref{lemma.decreasing.functions}, denote
\[
F_1(s)=\int_0^s
q\,t^{q-1}\,\varphi(t)
\,dt
\qquad\textnormal{and}\qquad
F_2(s)=
\left(
\int_0^s
\varphi(t)^\frac{1}{q}
\,dt
\right)^q,
\]
and notice that $F_1(0)=F_2(0)$, while
\[
\begin{split}
F_2'(s)
&=q\varphi(s)^\frac{1}{q}\left(
\int_0^s
\varphi(t)^\frac{1}{q}
\,dt
\right)^{q-1}\\
&\geq
q\varphi(s)^\frac{1}{q}\left(
s\varphi(s)^\frac{1}{q}
\right)^{q-1}
=
q\,s^{q-1}\,\varphi(s)
=
F_1'(s).
\end{split}
\]
This establishes \eqref{lemma.decreasing.functions} and hence concludes the proof of \eqref{theorem.radial.estimate.triple}.

\medskip

\noindent{\it Part 4.}\quad{}Finally, we show that $p_2$ is the largest exponent for which \eqref{theorem.radial.estimate.triple} holds. Let $q>p_2$ and consider the function
$u_\alpha(x)=\left(|x|^{-\alpha}-1\right)_+$
with $n/q\leq\alpha<(\lambda-p)/p$. Notice that 
\[
\|u_\alpha\|_{L^{q}(\mathbb{R}^{n})}^q
=
\int_{B_1(0)}
\big(|x|^{-\alpha}-1\big)^q
\,dx
=
C
\int_{0}^{1}
(1-r^{\alpha})^q\, r^{n-\alpha q-1}
\,dr
=\infty,
\]
by our choice of $\alpha$.

 On the other hand, we are going to see that
$\opnorm{\nabla u_\alpha}_{p,\lambda;\mathbb{R}^{n}}<\infty$, thus contradicting the inequality. To prove this, we claim that $y=0$ realizes the supremum in the definition of
$\opnorm{\nabla u_\alpha}_{p,\lambda;\mathbb{R}^{n}}$, and hence
\[
\opnorm{\nabla u_\alpha}_{p,\lambda;\mathbb{R}^{n}}^{p} 
= 
\int_{B_1(0)} |\nabla u_{\alpha}(x)|^{p}\,|x|^{\lambda-n}\, dx
=
C
\int_{0}^{1}
 r^{\lambda-\alpha p-p-1}
\,dr
<\infty,
\]
since $\lambda-\alpha p-p >0$.

We conclude proving the claim by monotonicity. For $y\in\mathbb{R}^n$, we have that
\[
\int_{B_1(0)} |\nabla u_{\alpha}(x)|^{p}\,|x-y|^{\lambda-n}\, dx
=
\alpha^{p}
\int_{B_1(0)} |x|^{-\alpha p-p}\,|x-y|^{\lambda-n}\, dx.
\]
Notice that, upon a rotation, we can assume $y=y_1e_1=(y_1,0,\ldots,0)$.
Denote
\[
J(y_1)=
\int_{B_1(0)} |x|^{-\alpha p-p}\,|x-y_1e_1|^{\lambda-n}\, dx.
\]
Since the function $J$ is under the hypotheses of Lemma \ref{lemma.monotonicity} (with $\eta=0$), we conclude that
 $ J$ is non-increasing in $[0,\infty)$, and therefore that $J(y_1)\leq J(0)$ for all $y_1\geq0$.
\end{proof}


\section{Embeddings in the general case: Proof of Theorem \ref{Adams.original.result}}\label{section.embeddings.general}

For the reader's convenience, we provide in this section D. R. Adams' proof of Theorem~\ref{Adams.original.result}, see \cite[Theorem 3.1]{Adams.1975}.
Recall that we consider a Lipschitz function $u:\mathbb{R}^n\to\mathbb{R}$ with $u\equiv0$ in $\mathbb{R}^n\setminus B_1(0)$, so that integrals in $\mathbb{R}^n$ and integrals in $B_1(0)$ coincide.

\begin{proof}[Proof of Theorem \ref{Adams.original.result}]
The proof is based on the following two claims,
\begin{equation}\label{estimate.pointwise.u.Riesz.potential}
|u(x)|\leq C (I_1|\nabla u|)(x),\qquad\textrm{a.e.}\ x.
\end{equation}
and 
\begin{equation}\label{estimate.adams.fact2}
\big|I_1 |\nabla u|\big|(x)\leq C\big(M_{\lambda/p}|\nabla u|(x)\big)^\frac{p}{\lambda}\big(M_{0}|\nabla u|(x)\big)^{1- \frac{p}{\lambda}},
\end{equation}
where 
\[
I_1f(x):=\int_{\mathbb{R}^n} f(y)\,|y-x|^{1-n}\, dy
\]
 is the Riesz potential of $f$, and 
\[
M_{\beta}f(x)
:=
\sup_{r>0}\left( r^{\beta-n} \int_{B_r(x)} |f(z)|\, dz\right),\qquad 0\leq\beta\leq n
\]
is the maximal function with parameter $\beta$. 

Once \eqref{estimate.pointwise.u.Riesz.potential} and \eqref{estimate.adams.fact2} are established, we can finish the proof as follows. By H\"older's inequality, we have
\[
\begin{split}
M_{\lambda/p}|\nabla u|(x)
&=
\sup_{r>0} \left(r^{\frac{\lambda}{p}-n} \int_{ B_r(x)} |\nabla u(z)|\, dz\right)
\\
&\leq
C
\sup_{r>0} \left(r^{\lambda-n}\int_{B_1(0)\cap B_r(x)} |\nabla u(z)|^{p}\, dz\right)^\frac{1}{p}
\leq
C\|\nabla u\|_{M^{p,\lambda}(B_1(0))}.
\end{split}
\]
(recall that $u\equiv0$ in $\mathbb{R}^n\setminus B_1(0)$).
 Then, \eqref{estimate.pointwise.u.Riesz.potential} and \eqref{estimate.adams.fact2} give
\[
|u(x)|
\leq
 C\big(M_{\lambda/p}|\nabla u|(x)\big)^\frac{p}{\lambda}\big(M_{0}|\nabla u|(x)\big)^{1-\frac{p}{\lambda}}
\leq
 C\|\nabla u\|_{M^{p,\lambda}(B_1(0))}^\frac{p}{\lambda}\big(M_{0}|\nabla u|(x)\big)^{1- \frac{p}{\lambda}}
\]
almost everywhere. It follows that 
\[
\begin{split}
\|u\|_{L^{q}(B_1(0))}^q
&\leq 
 C^q\,\|\nabla u\|_{M^{p,\lambda}(B_1(0))}^\frac{p q}{\lambda}
 \int_{B_1(0)}
 \big(M_{0}|\nabla u|(x)\big)^{ \frac{p q}{p_1}}
 \,dx
 \\
& \leq 
 C^q\,\|\nabla u\|_{M^{p,\lambda}(B_1(0))}^\frac{p q}{\lambda}
\,
 |B_1(0)|^{1-\frac{q}{p_1}}
 \,
 \|M_{0}|\nabla u|\|_{L^{p}(B_1(0))}^\frac{p q}{p_1},
 \end{split}
\]
where we have applied H\"older's inequality with exponents $p_1/(p_1-q)$ and $p_1/q$.

By the well-known $L^p$ estimate for the maximal function $M_0$
when $p>1$, there exists a constant $C$ depending only on $n$ and $p$ such that
\[
\|M_{0}|\nabla u|\|_{L^{p}(B_1(0))}
\leq
C
\|\nabla u\|_{L^{p}(B_1(0))},
\]
and therefore
\[
\|u\|_{L^{q}(B_1(0))}
 \leq 
 C
\,
 |B_1(0)|^{\frac1q-\frac{1}{p_1}}
 \,
 \|\nabla u\|_{M^{p,\lambda}(B_1(0))}^{\frac{p}{\lambda}+\frac{p}{p_1}}
 \leq
 C\,\|\nabla u\|_{M^{p,\lambda}(B_1(0))}
 \]
with $C$ depending only on $n, p,$ and $\lambda$ as desired.

Therefore, it remains to prove claims \eqref{estimate.pointwise.u.Riesz.potential} and \eqref{estimate.adams.fact2}.

Consider first estimate \eqref{estimate.pointwise.u.Riesz.potential}.
To prove it, notice that for $\sigma\in\mathbb{R}^n$ with $|\sigma|=1$ 
\[
|u(x)|=
\left|
-\int_0^\infty
\frac{d}{dr} u(x+r\sigma)
\,dr
\right|
\leq
\int_0^\infty
|\nabla u(x+r\sigma)|
\,dr.
\]
Then, integrating on $\sigma$ we get
\[
|u(x)|
\leq
C \int_0^\infty\int_{\partial B_1(0)}
\big(|\nabla u(x+r\sigma)| r^{1-n}\big)
r^{n-1}\,d\sigma dr
=
C\,(I_1|\nabla u|)(x)
\]
and \eqref{estimate.pointwise.u.Riesz.potential} is proved.

Next, consider estimate \eqref{estimate.adams.fact2}. We reproduce the argument in \cite{Adams.1975} to show that for a given function $f$ with compact support in $\mathbb{R}^n$, we have
 \begin{equation}\label{Inequality.Adams}
\big|I_1 f(x)\big|\leq C\big(M_{\lambda/p} f(x)\big)^\frac{p}{\lambda}\big(M_{0} f(x)\big)^{1- \frac{p}{\lambda}},
\end{equation}
where $C$ depends only on $n,p,$ and $\lambda$.

 For $f\not\equiv0$, let $\delta>0$ to be determined later and
 set 
 \[
 \begin{split}
 I_1 f(x)&=\int_{\mathbb{R}^n} f(y)\,|y-x|^{1-n}\, dy\\
 &=\int_{\{y:\ |x-y|<\delta\}}f(y)\,|y-x|^{1-n}\, dy
 +
 \int_{\{y:\ |x-y|\geq\delta\}}
 f(y)\,|y-x|^{1-n}\, dy
 \\
 &=I+I'.
 \end{split}
 \]
 Let 
 \[
 a_{k}(x)=\{y:\ 2^k\delta\leq|x-y|<2^{k+1}\delta\}\qquad\textrm{for}\ k\in\mathbb{Z}.
 \]
 Then,
\[
\begin{split}
 |I|&\leq\sum_{k=1}^\infty\int_{a_{-k}(x)}|f(y)|\,|x-y|^{1-n}\,dy
 \\
 &\leq\sum_{k=1}^\infty\big(2^{-k}\delta\big)^{1-n}\big(2^{-k+1}\delta\big)^{n} M_{0}f(x)
 = 2^n\delta M_0f(x).
\end{split}
\]
 Similarly,
\[
\begin{split}
 |I'|&\leq\sum_{k=0}^\infty\int_{a_{k}(x)}|f(y)|\,|x-y|^{1-n}\,dy
 \\
 &\leq\sum_{k=0}^\infty\big(2^{k}\delta\big)^{1-n}\big(2^{k+1}\delta\big)^{n-
 \frac{\lambda}{p}} M_{\lambda/p}f(x)
 =C\delta^{1-\frac{\lambda}{p}} M_{\lambda/p}f(x),
\end{split}
\]
since $p<\lambda$. The choice 
\[
\delta=\delta(x)=\left(
\frac{M_{\lambda/p}f(x)}{M_{0}f(x)}\right)^\frac{p}{\lambda}
\]
finally gives
\eqref{Inequality.Adams}.
\end{proof}

\section{Proof of Theorem \ref{intro.counterexamples} in the case when $\lambda$ is an integer}\label{section.lambda.integer}

In this section we prove Theorem \ref{intro.counterexamples} when $\lambda$ is an integer. The argument is very simple. The  case $\lambda\notin\mathbb{Z}$ is the core of our paper and will be considered in Section \ref{section.lambda.noninteger}. 

As mentioned in the introduction, the choice of the counterexample when $\lambda\in\mathbb{Z}$ was hinted by the number $p_1=\lambda p/(\lambda-p)$, which can be thought of as the Sobolev exponent in dimension $\lambda$. Then, it is natural to choose a function that provides a counterexample for the Sobolev inequality in dimension $\lambda$ and then look at this function embedded in the $n$-dimensional space. Namely, we take
\begin{equation}\label{eq.u.alpha.integer.case}
u(x)=\left(|x'|^{-\alpha}-2^\alpha\right)_{+}\,\xi(|x''|) ,
\end{equation}
where $x=(x',x'')\in\mathbb{R}^{\lambda}\times\mathbb{R}^{n-\lambda}$ 
and $\xi:\mathbb{R}^+\to[0,1]$ is a cutoff function with $\xi\equiv 1$ in $[0,1/2)$ and $\xi\equiv0$ in $\mathbb{R}^+\setminus[0,\sqrt{3}/3).$ Note that clearly $u$ has support in $B_1(0)\subset\mathbb{R}^n$.

The rest of the section is devoted to show the following result, which proves Theorem~\ref{intro.counterexamples} when $\lambda$ is an integer.
\begin{proposition}\label{prop.integer.case}
Let $\lambda$ be an integer such that $1<p<\lambda<n$ and assume that $q> p_1:=\lambda  p/(\lambda-p)$. Then, for $u$ given by \eqref{eq.u.alpha.integer.case}, we have that $\|u\|_{L^{q}(B_1)}=\infty$ and $\opnorm{\nabla u}_{p,\lambda;B_1}<\infty$~if
\[
\frac{\lambda}{q}\leq\alpha<\frac{\lambda-p}{p}.
\]
This proves the optimality of the range $q\leq p_1$ for \eqref{estimate.intro.triple} when $\lambda\in\mathbb{Z}$,
and in turn also for \eqref{estimate.intro.morrey}. 
\end{proposition}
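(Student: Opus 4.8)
The plan is to check the two displayed properties directly from the explicit form of $u$, the only non-routine ingredient being an appeal to the monotonicity Lemma~\ref{lemma.monotonicity} in order to locate, in the ``triple norm'', the worst singularity $y$ on the coordinate subspace $\{x'=0\}$. Note first that the range $\lambda/q\le\alpha<(\lambda-p)/p$ for the parameter is non-empty precisely because $q>p_1$; fix such an $\alpha$. For the divergence of $\|u\|_{L^q(B_1)}$: since $\alpha>0$ there is $\delta\in(0,1/2)$ with $|x'|^{-\alpha}-2^\alpha\ge\tfrac12|x'|^{-\alpha}$ whenever $|x'|\le\delta$, and since $\xi\equiv1$ on $[0,1/2)$ we get $u(x)\ge\tfrac12|x'|^{-\alpha}$ on $\{|x'|\le\delta\}\times\{|x''|<1/2\}$; hence $\int_{B_1}|u|^q\,dx\ge c\int_0^\delta\rho^{\lambda-1-\alpha q}\,d\rho=\infty$, because $\alpha q\ge\lambda$ by the choice of $\alpha$.

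For the triple norm, set $w(x')=(|x'|^{-\alpha}-2^\alpha)_+$, which is Lipschitz, supported in $\{|x'|<1/2\}$, and satisfies $|\nabla w|=\alpha|x'|^{-\alpha-1}$ there; then
\[
|\nabla u(x)|^p\ \le\ C\big(h_1(x)+h_2(x)\big),\qquad h_1(x)=\xi(|x''|)^p\,|x'|^{-\alpha p-p}\mathbf 1_{\{|x'|<1/2\}},\quad h_2(x)=w(x')^p\,|\xi'(|x''|)|^p
\]
both supported in the product $D:=B_{1/2}^{(\lambda)}\times B_{\sqrt3/3}^{(n-\lambda)}\subset B_1$; note that $\alpha p+p<\lambda$ already gives $|\nabla u|^p\in L^1$, hence $u\in W^{1,p}(\mathbb R^n)$. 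It remains to bound $\sup_{y}\int_D h_i(x)\,|x-y|^{\lambda-n}\,dx$ for $i=1,2$. The set $D$ is convex in each direction $e_j$ and symmetric about $\{x_j=0\}$, and for $j\in\{1,\dots,\lambda\}$ both $h_1$ and $h_2$ are non-increasing in $|x_j|$ with the other coordinates frozen (since $r\mapsto r^{-\alpha p-p}$ and $r\mapsto w(x')$ are non-increasing in $r=|x'|$). Thus the reflection hypothesis of Lemma~\ref{lemma.monotonicity}(b1)--(b2) holds with $\eta=0$ and $\theta=n-\lambda>0$, and applying the lemma successively in $e_1,\dots,e_\lambda$ gives $\int_D h_i(x)\,|x-y|^{\lambda-n}\,dx\le\int_D h_i(x)\,|x-(0,y'')|^{\lambda-n}\,dx$ for every $y=(y',y'')$ with $|y|\le1$.

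For the reduced integral I would split $D$ according to whether $|x''-y''|\le|x'|$ or $|x''-y''|>|x'|$: on the first set $|x-(0,y'')|^{\lambda-n}\le|x'|^{\lambda-n}$ while the $x''$-slice has measure $\le C|x'|^{n-\lambda}$, and on the second $|x-(0,y'')|^{\lambda-n}\le|x''-y''|^{\lambda-n}$ while $|x'|<\min(|x''-y''|,1/2)$. Carrying out the $x'$-integral first in each case (using $w^p\le|x'|^{-\alpha p}$ for $h_2$) collapses everything, up to finite multiplicative constants and terms bounded uniformly in $y$, to the one-dimensional integrals $\int_0^{1/2}\rho^{\lambda-1-\alpha p-p}\,d\rho$ for $h_1$ and $\int_0^{1/2}\rho^{\lambda-1-\alpha p}\,d\rho$ for $h_2$, both convergent precisely because $\lambda-\alpha p-p>0$, i.e.\ $\alpha<(\lambda-p)/p$. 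Hence $\opnorm{\nabla u}_{p,\lambda;B_1}<\infty$, and by \eqref{relation.norms} also $\|\nabla u\|_{M^{p,\lambda}(B_1)}<\infty$; together with the divergence of $\|u\|_{L^q(B_1)}$ this makes \eqref{estimate.intro.triple}, and a fortiori \eqref{estimate.intro.morrey}, fail for every $q>p_1$ when $\lambda\in\mathbb Z$, which is the asserted optimality.

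The only delicate point is the triple-norm bound: one has to control the interaction of the gradient blow-up $|x'|^{-\alpha p-p}$ along the subspace $\{x'=0\}$ with the kernel singularity $|x-y|^{\lambda-n}$ at $x=y$. Lemma~\ref{lemma.monotonicity} is precisely the tool that confines $y$ to the subspace carrying the former, after which the region-splitting above is elementary; without that reduction one would have to handle two separated singular sets simultaneously — the difficulty that genuinely appears when $\lambda\notin\mathbb Z$ in Section~\ref{section.lambda.noninteger}.
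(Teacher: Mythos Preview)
Your proof is correct and complete; every step checks out, including the successive application of Lemma~\ref{lemma.monotonicity} in each of the $\lambda$ coordinate directions (the hypotheses of (b1)/(b2) with $\eta=0$ are satisfied at each step because $|x'|^{-\alpha p-p}$ and $w(x')^p$ are radially non-increasing in $x'$ and independent of $y$) and the dyadic split according to $|x''-y''|\lessgtr |x'|$, which yields the convergent one-dimensional integrals $\int_0^{1/2}\rho^{\lambda-\alpha p-p-1}\,d\rho$ uniformly in $y''$.

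The route, however, is genuinely different from the paper's. The paper integrates out the transverse variables $x''$ \emph{first}, via the substitution $z=(x''-y'')/|x'-y'|$, which produces the bound $C(1+|\log|x'-y'||)$ for the inner integral; only then is Lemma~\ref{lemma.monotonicity} invoked, in dimension~$\lambda$ and with the roles swapped (the ``$h$'' of the lemma being the logarithmic weight $1-\log|z|$ and the kernel being $|z-y'|^{-\alpha p-p}$), to reduce to $y'=0$. Your order is the opposite: you push $y'$ to $0$ at the outset by applying the monotonicity lemma in the full $n$-dimensional integral, and then handle the interaction of $|x'|^{-\alpha p-p}$ with $|x-(0,y'')|^{\lambda-n}$ by an elementary region split that never produces a log. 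Your approach is arguably cleaner here --- no logarithmic correction appears, and the splitting is the same device used later in the fractal case (Section~\ref{section.lambda.noninteger}). The paper's approach, on the other hand, makes more transparent that the problem is essentially $\lambda$-dimensional after the transverse integration, which is the conceptual reason the Sobolev exponent in dimension $\lambda$ governs the answer.
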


\begin{proof}
For every $y\in \overline{B}_1^{(n)}$ we have that
\[
\begin{split}
\int_{B_1^{(n)}} &|\nabla u(x)|^{p}\,|x-y|^{\lambda-n}\, dx\\
&\leq
C
\int_{B_{1}^{(\lambda)}}\int_{B_{1}^{(n-\lambda)}} |x'|^{-\alpha p-p}\,|x-y|^{\lambda-n}\, dx''dx'
\\
&= 
C
\int_{B_{1}^{(\lambda)}} |x'|^{-\alpha p-p}\int_{B_{1}^{(n-\lambda)}}\left(1+\left(\frac{|x''-y''|}{|x'-y'|}\right)^2\right)^{\frac{\lambda-n}{2}}|x'-y'|^{\lambda-n}\, dx''dx',
\end{split}
\]
for some constant $C$ independent of $y$.
The change of variables $z=\frac{x''-y''}{|x'-y'|}$ yields
\[
\begin{split}
\int_{B_{1}^{(n-\lambda)}}&\left(1+\left(\frac{|x''-y''|}{|x'-y'|}\right)^2\right)^{\frac{\lambda-n}{2}}|x'-y'|^{\lambda-n}\,dx''\\
&\leq
\int_{B_{2/|x'-y'|}^{(n-\lambda)}}\left(1+|z|^2\right)^{\frac{\lambda-n}{2}}\,dz
=
C
\int_{0}^{\frac{2}{|x'-y'|}}\left(1+r^2\right)^{\frac{\lambda-n}{2}}r^{n-\lambda-1}\,dr\\
&\leq
C
\int_{0}^{\frac{2}{|x'-y'|}}\max\{1,r\}^{\lambda-n}r^{n-\lambda-1}\,dr
\leq C\left(1+\left|\log|x'-y'|\right|\right).
\end{split}
\]
Therefore, we have
\[
\int_{B_1^{(n)}} |\nabla u(x)|^{p}\,|x-y|^{\lambda-n}\, dx
\leq
C
\int_{B_{1}^{(\lambda)}} |x'|^{-\alpha p-p}
\left(1+\left|\log|x'-y'|\right|\right)\, dx'.
\]
We claim that the last integral is bounded uniformly in $y'\in \overline{B}_1^{(\lambda)}$. 
To verify this, since $\left|\log|x'-y'|\right| \leq \log 2$ for $x'\in B_1^ {(\lambda)}\setminus B_1(y')$ and $\lambda-\alpha p-p >0$,
it suffices to control the integral over $B_1(y')$. But then, calling $z:=y'-x'$, the integral becomes
$\int_{B_{1}^{(\lambda)}} h(z)|z-y'|^{-\alpha p-p}\, dz$ with $h(z)=1+\left| \log |z|\right|=1-\log |z|$ for $z\in B_{1}^{(\lambda)}$.
Now, since $h$ is non-negative and radially decreasing in $B_{1}^{(\lambda)}$, we can apply Lemma~\ref{lemma.monotonicity}
with $\eta=0$ and conclude that the largest value of the integral corresponds to $y'=0$. But since we have assumed $\lambda-\alpha p-p >0$,
the integral with $y'=0$ is finite.

On the other hand,
\[
\begin{split}
 \|u\|_{L^{q}(B_1)}^{q}
&\geq
\int_{B_{1/2}^{(\lambda)}}\int_{B_{1/2}^{(n-\lambda)}} u^q\,dx''dx' \\
&=
C
\int_{B_{1/2}^{(\lambda)}}
\left(|x'|^{-\alpha}-2^\alpha\right)^{q}
\,dx'
=
C
\int_{0}^{\frac12}
\left(r^{-\alpha}-2^\alpha\right)^{q}r^{\lambda-1}
\,dr
\end{split}
\]
and the last integral is divergent since $\lambda\leq\alpha q$ by hypothesis.
\end{proof}

\section{Proof of Theorem \ref{intro.counterexamples} in the general case}\label{section.lambda.noninteger}

In this section we conclude the proof of Theorem \ref{intro.counterexamples} by considering the case when $\lambda$ is not an integer.

Let us motivate first the case $n-1<\lambda<n$. As we have seen in Section \ref{section.lambda.integer}, when $\lambda$ is equal to $n-1$ expression
\eqref{eq.u.alpha.integer.case} provides a counterexample to the embedding \eqref{estimate.intro.triple}
when $q>p_1$, and therefore also to \eqref{estimate.intro.morrey} in view of \eqref{relation.norms}.
On the other hand, when $\lambda$ is equal to $n$ the Morrey and 
triple norms
coincide with the Sobolev norm and 
$u(x)=\left(|x|^{-\alpha}-2^\alpha\right)_{+}$ provides a counterexample to embedding \eqref{estimate.intro.triple}
for $q>p_1=p^*$ (in this case \eqref{estimate.intro.triple} is simply the Sobolev embedding).
In both cases the function that yields the counterexample is basically a negative power of the distance function, either to the origin in the case $\lambda=n$, or to a line when $\lambda=n-1$.
Therefore, when $\lambda$ is strictly between $n-1$ and $n$,
 a negative power of the distance to a fractal set of non-integer dimension $n-\lambda$ is a natural candidate
to be a counterexample to inequality \eqref{estimate.intro.triple}.

Let us describe precisely the functions that provide the counterexample.
When $n-1<\lambda<n$, we consider  
\begin{equation}\label{function.counterexample.in.dimension.between.n-1.n}
u_{\alpha,n}(x)=\left(\textrm{dist}(x,\mathcal{C}_{n,\lambda})^{-\alpha}-4^\alpha\right)_{+}
\end{equation}
for $\mathcal{C}_{n,\lambda}=\{0\}\times C_{\gamma}\subset\mathbb{R}^{n-1}\times[-1/2,1/2]$, where $C_{\gamma}$ is a generalized Cantor set with parameter 
\begin{equation}\label{defn.gamma.sec.5}
\gamma=1-2^{1-\frac{1}{n-\lambda}}\in(0,1).
\end{equation}

The generalized Cantor set $C_{\gamma}$ (see \cite{Falconer})  is obtained from the interval $[-1/2,1/2]$ by removing at iteration $j=1,2,\ldots$ the central interval of length $\gamma\,l_{j-1}$ from each remaining segment of length $l_{j-1}=\big((1-\gamma)/2\big)^{j-1}$; 
see Figure \ref{figure.gencantor}. A precise expression for $C_\gamma$ is given later in \eqref{eq.formula.complement.cantor}}, \eqref{GLM}, and \eqref{eq.definition.h_{l,m}}.
The usual Cantor set corresponds to $\gamma=1/3$.

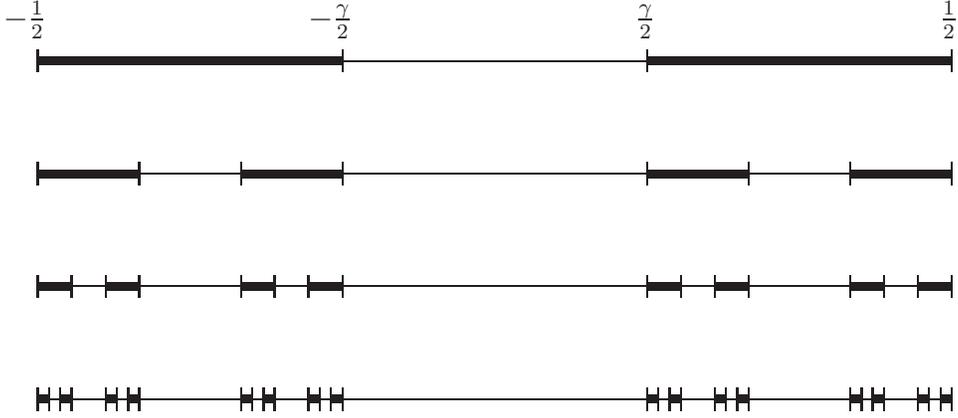
\begin{figure}
\setlength{\unitlength}{.15cm}
\begin{picture}(80,40)(0,-3)

 \put(0,0){\line(1,0){81}}
 \put(0,10){\line(1,0){81}}
 \put(0,20){\line(1,0){81}}
 \put(0,30){\line(1,0){81}}
 
 \put(-3,33){$-\frac12$}
 \put(80,33){$\frac12$} 
 
 \put(24,33){$-\frac{\gamma}{2}$}
 \put(53,33){$\frac{\gamma}{2}$}
 
 

 \multiput(0,29)(27,0){4}{\line(0,1){2}} 

 \multiput(0,19)(9,0){4}{\line(0,1){2}} 
 \multiput(54,19)(9,0){4}{\line(0,1){2}} 

 \multiput(0,9)(3,0){4}{\line(0,1){2}} 
 \multiput(18,9)(3,0){4}{\line(0,1){2}} 
 \multiput(54,9)(3,0){4}{\line(0,1){2}} 
 \multiput(72,9)(3,0){4}{\line(0,1){2}} 
 
 \multiput(0,-1)(1,0){4}{\line(0,1){2}} 
 \multiput(6,-1)(1,0){4}{\line(0,1){2}} 
 \multiput(18,-1)(1,0){4}{\line(0,1){2}} 
 \multiput(24,-1)(1,0){4}{\line(0,1){2}} 
 \multiput(54,-1)(1,0){4}{\line(0,1){2}} 
 \multiput(60,-1)(1,0){4}{\line(0,1){2}} 
 \multiput(72,-1)(1,0){4}{\line(0,1){2}} 
 \multiput(78,-1)(1,0){4}{\line(0,1){2}} 

 \linethickness{3pt}
 \multiput(0,30)(54,0){2}{\line(1,0){27}} 
 
 \multiput(0,20)(18,0){2}{\line(1,0){9}} 
 \multiput(54,20)(18,0){2}{\line(1,0){9}} 

 \multiput(0,10)(6,0){2}{\line(1,0){3}} 
 \multiput(18,10)(6,0){2}{\line(1,0){3}} 
 \multiput(54,10)(6,0){2}{\line(1,0){3}} 
 \multiput(72,10)(6,0){2}{\line(1,0){3}} 
 
 \multiput(0,0)(2,0){2}{\line(1,0){1}} 
 \multiput(6,0)(2,0){2}{\line(1,0){1}} 
 \multiput(18,0)(2,0){2}{\line(1,0){1}} 
 \multiput(24,0)(2,0){2}{\line(1,0){1}} 
 \multiput(54,0)(2,0){2}{\line(1,0){1}} 
 \multiput(60,0)(2,0){2}{\line(1,0){1}} 
 \multiput(72,0)(2,0){2}{\line(1,0){1}} 
 \multiput(78,0)(2,0){2}{\line(1,0){1}} 

\end{picture}
\caption{Construction of the generalized Cantor set $C_{\gamma}$}
\label{figure.gencantor}
\end{figure}

The reason for our choice of $\gamma$ in \eqref{defn.gamma.sec.5} is that
the Hausdorff dimension of $C_{\gamma}$ is 
\[
\frac{-\log{2}}{\log{\frac{1-\gamma}{2}}}=n-\lambda
\]
 (see \cite[Theorem 9.3]{Falconer}). Thus, letting $\lambda$ vary between $n-1$ and $n$ yields any fractal dimension between 0 and 1. In particular, \eqref{function.counterexample.in.dimension.between.n-1.n} somehow interpolates the integer cases $\lambda=n-1$ and $\lambda=n$.

Note that $u_{\alpha,n}$ has support in $B_{3/4}^{(n)}(0)$.     Indeed, if $y\in\mathcal{C}_{n,\lambda}$ then $|y|\leq1/2$, and thus $|x-y|\geq1/4$ if $|x|\geq3/4$; in particular $\textrm{dist}(x,\mathcal{C}_{n,\lambda})\geq1/4$ and $u_{\alpha,n}(x)=0$.

In the case when $k-1<\lambda< k$ for some integer $k\in\{2,\ldots, n-1\}$ we embed
 into $\mathbb{R}^n$ the counterexample in $\mathbb{R}^k$ by means of an appropriate cutoff function (as we did in the previous section when $\lambda\in\mathbb{Z}$). In this way, we reduce the proof to the case $n-1<\lambda<n$.
More precisely, we consider
\begin{equation}\label{counterexample.noninteger.general}
u_\alpha(x)=u_{\alpha,k}(x')\,\xi(|x''|)\qquad\textrm{if}\ k<n,
\end{equation}
where $x=(x',x'')\in\mathbb{R}^{k}\times\mathbb{R}^{n-k}$, $u_{\alpha,k}$ is given by \eqref{function.counterexample.in.dimension.between.n-1.n} with $n$ replaced by $k$, i.e.,
\begin{equation}\label{counterexample.noninteger.general.2}
u_{\alpha,k}(x')=\left(\textrm{dist}(x',\mathcal{C}_{k,\lambda})^{-\alpha}-4^\alpha\right)_{+},
\end{equation}
and
 $\xi:\mathbb{R}^+\to[0,1]$ is a cutoff function with $\xi\equiv 1$ in $[0,1/2)$ and $\xi\equiv0$ in $\mathbb{R}^+\setminus[0,\sqrt{3}/3).$ Note that if $u_\alpha(x)\neq0$ then necessarily $|x'|\leq3/4$ and $|x''|\leq\sqrt{3}/3$; thus $|x|<1$.

The rest of the section is devoted to proving the following result, which together with Proposition \ref{prop.integer.case} completes the proof of Theorem \ref{intro.counterexamples}.
\begin{theorem}\label{general.result}
Let $p,\lambda,q\in\mathbb{R}$ be such that $1<p<\lambda<n$, $k-1<\lambda<k$ for some integer $k\in\{2,\ldots,n\}$, and $q\geq1$.
Consider
\[
u_\alpha(x) =
\left\{
\begin{split}
&\left(\textnormal{dist}(x,\mathcal{C}_{n,\lambda})^{-\alpha}-4^\alpha\right)_{+}\hspace{67pt}\textnormal{if}\ k=n \\
&\left(\textnormal{dist}(x',\mathcal{C}_{k,\lambda})^{-\alpha}-4^\alpha\right)_{+}\,\xi(|x''|) \hspace{29pt}\textnormal{if}\ k\in\{2,\ldots,n-1\},
\end{split}
\right.
\]
with $x=(x',x'')\in\mathbb{R}^{k}\times\mathbb{R}^{n-k}$, $\xi:\mathbb{R}^+\to[0,1]$ a cutoff function as described after \eqref{counterexample.noninteger.general.2}, and $\mathcal{C}_{k,\lambda}$ a set of Hausdorff dimension $k-\lambda$ given by
\[
\mathcal{C}_{k,\lambda}=\{0\}\times C_{\gamma}\subset\mathbb{R}^{k-1}\times[-1/2,1/2],
\]
where $C_{\gamma}\subset[-1/2,1/2]$ is a generalized Cantor set with parameter $\gamma=1-2^{1-\frac{1}{k-\lambda}}$.
Then, 
\[
\|u_\alpha\|_{L^{q}(B_1^{(n)})}=\infty\quad\textrm{and}\quad \opnorm{\nabla u_\alpha}_{p,\lambda;B_1^{(n)}}<\infty
\]
if
\begin{equation}\label{conditions.alpha.sect5}
\frac{\lambda}{q}\leq\alpha<\frac{\lambda-p}{p} .
\end{equation}
This proves the optimality of the range $q\leq p_1=\lambda p/(\lambda-p)$ for inequality \eqref{estimate.intro.triple}, and in turn also for \eqref{estimate.intro.morrey}. 
\end{theorem}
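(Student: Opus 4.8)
The plan is to treat three things in turn: the divergence of the $L^q$ norm (elementary), the reduction of the ``triple'' estimate from the case $k<n$ to the case $k=n$ (integrating out the extra variables and absorbing the cutoff), and the estimate of the triple norm in the borderline dimension $n-1<\lambda<n$, which is the real point. For the $L^q$ divergence, on a fixed cylinder one has $u_\alpha\gtrsim\textrm{dist}(x',\mathcal{C}_{k,\lambda})^{-\alpha}$, so it suffices to show $\int_{\{\textrm{dist}(x',\mathcal{C}_{k,\lambda})<c\}}\textrm{dist}(x',\mathcal{C}_{k,\lambda})^{-\alpha q}\,dx'=\infty$ for some $c>0$. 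The choice $\gamma=1-2^{1-1/(k-\lambda)}$ gives $l_j=2^{-j/(k-\lambda)}$, i.e.\ $2^j=l_j^{-(k-\lambda)}$, where $l_j$ is the length of a generation-$j$ interval of $C_\gamma$ and $2^j$ is their number. Writing $x'=(\bar x',x_k)\in\mathbb{R}^{k-1}\times\mathbb{R}$, the sets $\{|\bar x'|\in(l_{j+1},l_j]\}\cap\{\textrm{dist}(x_k,C_\gamma)\le l_j\}$ are essentially disjoint, on each $\textrm{dist}(x',\mathcal{C}_{k,\lambda})\approx l_j$, and each has measure $\gtrsim l_j^{\,k-1}\cdot(2^j l_j)=l_j^{\,\lambda}$; hence the integral is $\gtrsim\sum_j l_j^{\,\lambda-\alpha q}=\infty$ because $\alpha q\ge\lambda$.

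To reduce the triple norm to the case $k=n$ when $k<n$, use $|\nabla u_\alpha|^p\lesssim|\nabla_{x'}u_{\alpha,k}(x')|^p+u_{\alpha,k}(x')^p\,|\nabla_{x''}\xi(|x''|)|^p$. Integrating out $x''$ and using $\lambda<k$ exactly as in the proof of Proposition~\ref{prop.integer.case} gives $\int_{B_1^{(n-k)}}\big(|x'-y'|^2+|x''-y''|^2\big)^{(\lambda-n)/2}\,dx''\lesssim|x'-y'|^{\lambda-k}$ for $x'\neq y'$, so the first term contributes at most $C\sup_{y'}\int_{B_1^{(k)}}|\nabla_{x'}u_{\alpha,k}|^p|x'-y'|^{\lambda-k}\,dx'=C\opnorm{\nabla u_{\alpha,k}}_{p,\lambda;B_1^{(k)}}^p$, i.e.\ the case $k=n$ in dimension $k$. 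The second term is lower order: since $\alpha p<\lambda-p$, the power $\textrm{dist}(x',\mathcal{C}_{k,\lambda})^{-\alpha p}$ pairs with the $|x'-y'|^{\lambda-k}$ produced by integrating out $x''$ to give a finite integral uniformly in $y$, by the same local Whitney-type count as below but with exponent $\alpha p$ in place of $(\alpha+1)p$, where there is even more room.

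Thus everything comes down to the case $n-1<\lambda<n$. Using $|\nabla u_{\alpha,n}|\le\alpha\,\textrm{dist}(\cdot,\mathcal{C}_{n,\lambda})^{-\alpha-1}$ and setting $p':=(\alpha+1)p$ — which satisfies $p'<\lambda$ precisely because $\alpha<(\lambda-p)/p$ — one must prove
\[
\sup_{y\in\overline{B}_1^{(n)}}\ \int_{\{\textrm{dist}(x,\mathcal{C}_{n,\lambda})<1/4\}}\textrm{dist}(x,\mathcal{C}_{n,\lambda})^{-p'}\,|x-y|^{\lambda-n}\,dx<\infty .
\]
First I would locate the singularity. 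Writing $x=(\bar x,x_n)\in\mathbb{R}^{n-1}\times\mathbb{R}$, for fixed $x_n$ the weight $\textrm{dist}(x,\mathcal{C}_{n,\lambda})^{-p'}$ (which vanishes where $\textrm{dist}\ge 1/4$) is an even, non-increasing function of each coordinate $\bar x_i$; so Lemma~\ref{lemma.monotonicity} with $\eta=0$, applied successively in $\bar x_1,\dots,\bar x_{n-1}$, shows the supremum is attained at some $y=(0,y_n)$ with $|y_n|\le 1$.

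It remains to bound the integral for $y=(0,y_n)$, and this is the main obstacle. The plan is to build a Whitney-type decomposition of $\{\textrm{dist}(x,\mathcal{C}_{n,\lambda})<1/4\}$ into countably many boxes $Q$ with $\textrm{diam}\,Q\approx\textrm{dist}(Q,\mathcal{C}_{n,\lambda})=:\rho_Q$, organized by the explicit description of the gaps $G_{l,m}$ of $C_\gamma$ (the interval deleted at generation $m$ in position $l$, of length $\gamma\,l_{m-1}$, with both endpoints in $C_\gamma$); on $Q$ one has $\textrm{dist}(x,\mathcal{C}_{n,\lambda})\approx\rho_Q$. Two combinatorial facts must be extracted from the construction, both being the Ahlfors $(n-\lambda)$-regularity of $C_\gamma$ that the choice of $\gamma$ encodes: (i) the number of boxes with $\rho_Q\approx l_j$ is $\approx 2^j=l_j^{-(n-\lambda)}$; (ii) for fixed $y$ and each $j$, the number of boxes with $\rho_Q\approx l_j$ together with $\textrm{dist}(Q,y)\lesssim l_j$ is $O(1)$, uniformly in $j$ and $y$. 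Splitting the integral over the boxes into dyadic annuli $\{|x-y|\approx 2^{-i}\}$, fact (i) controls the boxes with $\textrm{dist}(Q,y)\gg\rho_Q$ and fact (ii) those near $y$; in every case the generation-$j$ contribution is $\lesssim l_j^{-(n-\lambda)}\cdot l_j^{\,n}\cdot l_j^{-p'}=l_j^{\,\lambda-p'}$, and $\sum_j l_j^{\,\lambda-p'}<\infty$ precisely because $p'<\lambda$. The hard part is making (i)--(ii) uniform across all generations and positions simultaneously, and handling $y_n$ inside a gap of $C_\gamma$ — where one can first reduce, by a reflection/monotonicity argument inside that gap, to $y_n$ being an endpoint of the gap (hence a point of $C_\gamma$), or else carry out a direct near/far splitting of $\mathcal{C}_{n,\lambda}$ about $y$. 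This bookkeeping is exactly what makes $\lambda\notin\mathbb{Z}$ substantially more delicate than the integer case of Proposition~\ref{prop.integer.case}.
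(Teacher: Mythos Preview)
Your outline is sound and follows the same overall architecture as the paper (reduce $k<n$ to $k=n$ by integrating out $x''$, then handle $n-1<\lambda<n$), but the two technical steps are done by genuinely different arguments. For the $L^q$ divergence you slice by annuli $\{|\bar x'|\in(l_{j+1},l_j]\}$ and count the measure of the $l_j$-neighborhood of $C_\gamma$ via $2^j=l_j^{-(k-\lambda)}$; the paper instead exploits self-similarity directly, applying to each gap $G_{l,m}$ the affine change of variables that maps it to $G_{1,1}$ and obtaining a geometric series in $2\big(\tfrac{1-\gamma}{2}\big)^{n-\alpha q}\ge 1$. For the triple norm in the case $n-1<\lambda<n$, you propose an abstract Whitney decomposition governed by Ahlfors $(n-\lambda)$-regularity of $C_\gamma$; the paper works concretely with the gap intervals themselves: it evaluates $\int_{G_{l,m}}\int_{B_{1/4}^{(n-1)}}\textnormal{dist}(x,\mathcal{C}_{n,\lambda})^{-p'}|x-y|^{\lambda-n}\,dx'\,dx''$ in cylindrical coordinates (Lemmas~\ref{lemma.integral.distance.and.y}--\ref{lemma.integral.only.distance}), then clusters the $2^{l-1}$ gaps of generation $l$ into $l-1$ dyadic families according to their distance to $y''$, plus one nearest gap, and sums. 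Your approach is more conceptual and would generalize to other Ahlfors-regular sets; the paper's is hands-on and yields explicit bounds at each step. Both lead to a sum of the form $\sum_l P(l)\big(\tfrac{1-\gamma}{2}\big)^{(\lambda-p')(l-1)}$ with $P$ polynomial; note that your claimed per-generation bound $l_j^{\,\lambda-p'}$ will in fact pick up an extra factor $\sim j$ from the $\sim j$ dyadic annuli in $|x-y|$ (the paper's sum has the same polynomial factors), which is harmless. One small simplification you missed in the reduction $k<n$: since $u_{\alpha,k}\lesssim\textnormal{dist}^{-\alpha}\lesssim\textnormal{dist}^{-\alpha-1}\lesssim|\nabla u_{\alpha,k}|$ on the support, the cutoff contribution is absorbed directly into the gradient term and needs no separate treatment.
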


The proof of Theorem \ref{general.result} is divided into two parts. In the first part we reduce the computations from dimension $n$ to dimension $k$ with a similar argument to the one in the proof of Proposition \ref{prop.integer.case}. 
More precisely we will show, for $u_{\alpha,k}$ given by \eqref{counterexample.noninteger.general.2}, that
\[
\opnorm{\nabla u_\alpha}_{p,\lambda;B_1^{(n)}}\leq C\,\opnorm{\nabla u_{\alpha,k}}_{p,\lambda;B_1^{(k)}}
\]
and
\[
\|u_\alpha\|_{L^{q}(B_1^{(n)})}\geq C^{-1}\|u_{\alpha,k}\|_{L^{q}(B_{1}^{(k)})},
\]
for some constant $C$,
and hence that it is enough to study the case $k=n$ taking $u_\alpha=u_{\alpha,n}$. 
In the second part of the proof we will show that \eqref{conditions.alpha.sect5} leads to 
\[
\|u_{\alpha,n}\|_{L^{q}(B_{1}^{(n)})}
=
\infty\quad\textrm{and}\quad \opnorm{\nabla u_{\alpha,n}}_{p,\lambda;B_1^{(n)}}<\infty,
\]
as desired; this part is the content of the following two propositions.
\begin{proposition}\label{prop.Lp.norm.infinite}
Let $\lambda,q\in\mathbb{R}$ be such that $\lambda>n-1$ and
$q\geq1$. Consider
 $u_{\alpha,n}$ given by \eqref{function.counterexample.in.dimension.between.n-1.n}
with $\alpha$ such that $\lambda\leq\alpha q$. 
Then,
$\| u_{\alpha,n}\|_{L^{q}(B_1^{(n)})}=\infty$.
\end{proposition}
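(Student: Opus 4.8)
\emph{Proof proposal.}
The plan is to show that $u_{\alpha,n}$ carries a singularity along $\mathcal{C}_{n,\lambda}=\{0\}\times C_\gamma$ that is not $q$-integrable, the point being that the specific value \eqref{defn.gamma.sec.5} of $\gamma$ makes the tubular neighborhoods of $\mathcal{C}_{n,\lambda}$ of size exactly $\rho^{\lambda}$, which is borderline for divergence of $\int \mathrm{dist}(\cdot,\mathcal{C}_{n,\lambda})^{-\alpha q}$ when $\alpha q\ge\lambda$. Write $g(x):=\mathrm{dist}(x,\mathcal{C}_{n,\lambda})$. First I would observe that, for $\rho_0:=(2\cdot 4^\alpha)^{-1/\alpha}$ (so that $0<\rho_0<1/2$ and $\rho_0^{-\alpha}\ge 2\cdot 4^\alpha$; note $\alpha\ge\lambda/q>0$), one has $u_{\alpha,n}=\big(g^{-\alpha}-4^\alpha\big)_+\ge\tfrac12\,g^{-\alpha}$ on $\{g<\rho_0\}$, and moreover $\{g<\rho_0\}\subset B^{(n)}_{1/2+\rho_0}\subset B_1^{(n)}$. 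Hence it suffices to prove that $\int_{\{g<\rho_0\}}g^{-\alpha q}\,dx=\infty$.

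The core step is a lower bound for $\mu(\rho):=\big|\{x\in\mathbb{R}^n:\ g(x)<\rho\}\big|$ at the Cantor scales $l_j:=\big((1-\gamma)/2\big)^{j}$. Let $E_j\subset[-1/2,1/2]$ denote the union of the $2^{j}$ closed intervals of length $l_j$ at generation $j$ of the construction of $C_\gamma$, so that $C_\gamma\subset E_j$, $|E_j|=2^{j}l_j$, and each generation-$j$ interval has both endpoints in $C_\gamma$. If $x=(x',x_n)$ with $|x'|<l_j$ and $x_n\in E_j$, then $x_n$ lies in some generation-$j$ interval whose endpoints belong to $C_\gamma$ and are within distance $l_j$ of $x_n$; therefore $g(x)<\sqrt2\,l_j$. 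This yields
\[
\mu(\sqrt2\,l_j)\ \ge\ \big|B^{(n-1)}_{l_j}(0)\big|\cdot|E_j|\ =\ c_n\,2^{j}\,l_j^{\,n},\qquad c_n:=\big|B^{(n-1)}_1(0)\big|.
\]
The defining relation in \eqref{defn.gamma.sec.5}, namely $(1-\gamma)/2=2^{-1/(n-\lambda)}$, gives $l_j=2^{-j/(n-\lambda)}$ and hence $2^{j}l_j^{\,n}=2^{-j\lambda/(n-\lambda)}=l_j^{\lambda}$, so $\mu(\sqrt2\,l_j)\ge c_n\,l_j^{\lambda}$. Since $\mu$ is nondecreasing and $\sqrt2\,l_{j+1}/(\sqrt2\,l_j)=(1-\gamma)/2$ is a fixed ratio in $(0,1)$, interpolating between consecutive scales produces constants $c>0$ and $\rho_1>0$ such that $\mu(\rho)\ge c\,\rho^{\lambda}$ for all $0<\rho\le\rho_1$.

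To conclude, set $\rho_*:=\min(\rho_0,\rho_1)$; writing $g(x)^{-\alpha q}=\int_0^{g(x)^{-\alpha q}}ds$ and using Fubini,
\[
\int_{\{g<\rho_*\}}g^{-\alpha q}\,dx=\int_0^\infty\big|\{x:\ g(x)<\rho_*,\ g(x)^{-\alpha q}>s\}\big|\,ds=\int_0^\infty\mu\!\big(\min\{\rho_*,\,s^{-1/(\alpha q)}\}\big)\,ds,
\]
which is bounded below by $\int_{\rho_*^{-\alpha q}}^\infty\mu\big(s^{-1/(\alpha q)}\big)\,ds\ge c\int_{\rho_*^{-\alpha q}}^\infty s^{-\lambda/(\alpha q)}\,ds$, using on the last range that $s^{-1/(\alpha q)}\le\rho_*\le\rho_1$. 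The final integral diverges precisely because $\lambda\le\alpha q$, i.e. $\lambda/(\alpha q)\le 1$. Together with the reduction of the first paragraph, this gives $\|u_{\alpha,n}\|_{L^{q}(B_1^{(n)})}=\infty$.

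I expect the only delicate part to be the core step of the second paragraph: one must use the \emph{exact} value of $\gamma$ so that the volume of the $\sqrt2\,l_j$-neighborhood comes out as $l_j^{\lambda}$, and one must pass from the discrete sequence of scales $l_j$ to all small radii. Carrying out the latter via monotonicity of $\mu$ avoids having to establish an Ahlfors upper-regularity bound for $\mathcal{C}_{n,\lambda}$, which would otherwise be needed if one instead tried to estimate the annular shells $\{\sqrt2\,l_{j+1}\le g<\sqrt2\,l_j\}$ directly.
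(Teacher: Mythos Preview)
Your proof is correct and takes a genuinely different route from the paper. The paper decomposes the integral over the \emph{complement} of the Cantor set: it writes
\[
\int_{-1/2}^{1/2}\int_{B_{1/4}^{(n-1)}}\big(\textnormal{dist}(x,\mathcal{C}_{n,\lambda})^{-\alpha}-4^{\alpha}\big)_+^{q}\,dx'dx''
=\sum_{l=1}^{\infty}\sum_{m=1}^{2^{l-1}}\int_{G_{l,m}}\int_{B_{1/4}^{(n-1)}}(\cdots)\,dx'dx'',
\]
then uses the self-similarity of $C_\gamma$ via an affine change of variables on each gap $G_{l,m}$ to bound each summand below by $\big((1-\gamma)/2\big)^{(n-\alpha q)(l-1)}$ times a fixed positive integral; summing yields a geometric series that diverges exactly when $2\big((1-\gamma)/2\big)^{n-\alpha q}\ge 1$, i.e.\ when $\lambda\le\alpha q$. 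You instead work on the \emph{sublevel sets} $\{g<\rho\}$: you prove the lower Minkowski-content bound $\mu(\rho)\ge c\,\rho^{\lambda}$ at the scales $\sqrt{2}\,l_j$ (using that the endpoints of the generation-$j$ intervals lie in $C_\gamma$ and the identity $2^{j}l_j^{n}=l_j^{\lambda}$ coming from the specific $\gamma$), interpolate to all small $\rho$ by monotonicity, and then feed this into the layer-cake formula. Both arguments hinge on the same arithmetic identity $(1-\gamma)/2=2^{-1/(n-\lambda)}$, but they organize the estimate differently: the paper's version stays closer to the explicit gap structure and self-similarity, while yours isolates the geometric reason for divergence --- the $(n-\lambda)$-dimensional lower Minkowski content of $\mathcal{C}_{n,\lambda}$ --- and would transfer verbatim to any set with $\mu(\rho)\gtrsim\rho^{\lambda}$.
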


\begin{proposition}\label{prop.triple.norm.bounded}
Let $\lambda,p\in\mathbb{R}$ be such that 
 $n-1<\lambda<n$, $p>1$ and consider $u_{\alpha,n}$ given by \eqref{function.counterexample.in.dimension.between.n-1.n}
with $\alpha>0$ satisfying 
\begin{equation}\label{conditions.alpha}
\alpha<\frac{\lambda-p}{p}.
\end{equation}
Then,
\[
\opnorm{\nabla u_{\alpha,n}}_{p,\lambda;B_1^{(n)}}^{p} :=\sup_{y\in \overline{B}_1^{(n)} }\int_{B_1^{(n)}} |\nabla u_{\alpha,n}(x)|^{p}\,|x-y|^{\lambda-n}\, dx<\infty.
\]
\end{proposition}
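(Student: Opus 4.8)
I will focus on Proposition~\ref{prop.triple.norm.bounded}, which together with the dimensional reduction to the case $k=n$ and Proposition~\ref{prop.Lp.norm.infinite} yields Theorem~\ref{general.result}; it is the crux of the whole argument. The plan is to bound $\int_{B_1^{(n)}}|\nabla u_{\alpha,n}(x)|^p|x-y|^{\lambda-n}\,dx$ uniformly in $y$ after passing to a clean model integral. Writing $d(x):=\textrm{dist}(x,\mathcal{C}_{n,\lambda})$, on $\{u_{\alpha,n}>0\}=\{d<1/4\}$ we have $u_{\alpha,n}=d^{-\alpha}-4^\alpha$, hence $|\nabla u_{\alpha,n}|\le\alpha\,d^{-\alpha-1}$ (as $d$ is $1$-Lipschitz), while $\nabla u_{\alpha,n}=0$ a.e.\ on $\{d\ge1/4\}$. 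Setting $\beta:=(\alpha+1)p$, hypothesis \eqref{conditions.alpha} is precisely equivalent to $0<\beta<\lambda$, so it suffices to prove
\[
\sup_{y\in\overline B_1^{(n)}}\ \int_{B_1^{(n)}}d(x)^{-\beta}\,|x-y|^{\lambda-n}\,dx\ <\ \infty .
\]
As a preliminary one may apply Lemma~\ref{lemma.monotonicity} in each of the first $n-1$ coordinate directions (the integrand is radially non-increasing in $(x_1,\dots,x_{n-1})$ and $B_1^{(n)}$ is a ball) to push $y$ onto the $x_n$-axis on which $\mathcal{C}_{n,\lambda}$ lies, but this simplification is not essential.

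Next I would reduce to the worst case $y\in\mathcal{C}_{n,\lambda}$. Put $\delta:=d(y)$ and split $B_1^{(n)}$ into $\{|x-y|<\delta/2\}$ and its complement. On the first set $d(x)\ge\delta/2$, so that part of the integral is $\le(2/\delta)^{\beta}\int_{|x-y|<\delta/2}|x-y|^{\lambda-n}\,dx=C\,\delta^{\lambda-\beta}\le C$ since $\lambda-\beta>0$. On the complement, choosing $y^*\in\mathcal{C}_{n,\lambda}$ with $|y-y^*|=\delta$ gives $|x-y^*|\le|x-y|+\delta\le3|x-y|$, hence $|x-y|^{\lambda-n}\le3^{\,n-\lambda}|x-y^*|^{\lambda-n}$, and that part is $\le C\int_{B_1^{(n)}}d(x)^{-\beta}|x-y^*|^{\lambda-n}\,dx$. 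So the problem is reduced to proving the bound for $y\in\mathcal{C}_{n,\lambda}$.

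For $y\in\mathcal{C}_{n,\lambda}$ I would decompose dyadically in $|x-y|$: on $S_k:=\{2^{-k-1}\le|x-y|<2^{-k}\}$ we have $|x-y|^{\lambda-n}\le2^{(k+1)(n-\lambda)}$ because $n-\lambda>0$, so
\[
\int_{B_1^{(n)}}d(x)^{-\beta}|x-y|^{\lambda-n}\,dx\ \le\ C\sum_{k}2^{k(n-\lambda)}\int_{B_{2^{-k}}(y)}d(x)^{-\beta}\,dx ,
\]
the sum being over the finitely-many-negative $k$ with $2^{-k}\le2$. The key auxiliary estimate is
\[
\int_{B_\rho(y)}d(x)^{-\beta}\,dx\ \le\ C\,\rho^{\,n-\beta}\qquad\text{for all }y\in\mathcal{C}_{n,\lambda},\ 0<\rho\le2 ,
\]
which inserted above gives $\sum_k2^{k(n-\lambda)}2^{-k(n-\beta)}=\sum_k2^{-k(\lambda-\beta)}<\infty$ and finishes the proof. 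To get it I would use the layer-cake identity $\int_{B_\rho(y)}d^{-\beta}\,dx=\beta\int_0^\infty|\{x\in B_\rho(y):d(x)<t\}|\,t^{-\beta-1}\,dt$ together with the control
\[
\bigl|\{x\in B_\rho(y):d(x)<t\}\bigr|\ \le\ C\min\{\rho^{\,n},\ \rho^{\,n-\lambda}t^{\,\lambda}\}\qquad(y\in\mathcal{C}_{n,\lambda}),
\]
splitting the $t$-integral at $t=\rho$ and using $0<\beta<\lambda$ to make both pieces sum to a multiple of $\rho^{\,n-\beta}$.

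The hard part is the measure estimate $|\{x\in B_\rho(y):d(x)<t\}|\le C\,\rho^{\,n-\lambda}t^{\,\lambda}$ for $t\le\rho$, uniform in $y\in\mathcal{C}_{n,\lambda}$; everything else above is routine. Since $\mathcal{C}_{n,\lambda}=\{0\}\times C_\gamma$, this reduces to showing that $C_\gamma\cap(y_n-\rho,y_n+\rho)$ can be covered by at most $C(\rho/t)^{\,n-\lambda}$ intervals of length $t$, uniformly in $y_n\in C_\gamma$ — a uniform, local box-counting bound of exponent $n-\lambda$. The choice \eqref{defn.gamma.sec.5} of $\gamma$ is exactly what makes the levels $l_j=((1-\gamma)/2)^j$ and the counts $2^j$ satisfy $2^j\,l_j^{\,n-\lambda}=1$. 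The obstacle is not at the construction scales $t=l_j$, where self-similarity applies directly, but at the intermediate scales $l_{j+1}<t<l_j$, and in controlling how $(y_n-\rho,y_n+\rho)$ straddles several branches of the construction and how the removed gaps of length $\gamma l_j$ sit inside it; one must compare with the nearest level and keep the bookkeeping uniform in the base point $y_n$. This combinatorial analysis of the explicit generalized Cantor set is the technical core of the section. Finally, the same bookkeeping yields the matching lower bound $|\{x\in B_1^{(n)}:d(x)<t\}|\ge c\,t^{\,\lambda}$, which via the layer-cake formula forces $\int_0 t^{\lambda-\alpha q-1}\,dt=+\infty$ as soon as $\alpha q\ge\lambda$, thereby proving Proposition~\ref{prop.Lp.norm.infinite}.
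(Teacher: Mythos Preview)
Your approach is correct and genuinely different from the paper's. The paper restricts $y$ to $\{0\}\times[-1/2,1/2]$ (Lemma~\ref{lemma.reductions}), decomposes the $x''$-integral as a sum over the gap intervals $G_{l,m}$, clusters the $2^{l-1}$ gaps of each generation $l$ according to their distance from $y''$, bounds the integral over the single nearest gap by Lemma~\ref{lemma.integral.distance.and.y} and the remaining ones by Lemma~\ref{lemma.integral.only.distance} (after factoring out $|x-y|^{\lambda-n}$), and sums the resulting geometric-type series. You instead reduce $y$ to $\mathcal{C}_{n,\lambda}$ by a near/far split, decompose dyadically in $|x-y|$, and reduce everything to the scale-invariant bound $\int_{B_\rho(y)}d^{-\beta}\le C\rho^{n-\beta}$, which via the layer-cake formula is equivalent to the uniform local box-counting of $C_\gamma$ at exponent $n-\lambda$. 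Your scheme is more measure-theoretic and would work verbatim for any $(n-\lambda)$-Ahlfors-regular set in place of $\{0\}\times C_\gamma$; the paper's argument is more hands-on and tied to the explicit gap structure. One remark: the box-counting step you flag as the technical core is in fact short. Choosing levels $j,j'$ with $l_j\sim\rho$ and $l_{j'}\sim t$, an interval of length $2\rho<2l_{j-1}$ meets at most $C(\gamma)$ level-$j$ intervals (consecutive ones being separated by gaps of length at least $\gamma l_{j-1}$), and each level-$j$ interval contains exactly $2^{\,j'-j}=(l_j/l_{j'})^{n-\lambda}\le C(\rho/t)^{n-\lambda}$ level-$j'$ intervals of length $l_{j'}\le t$; the intermediate-scale and straddling concerns you raise contribute only constant factors.
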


Let us now prove Theorem \ref{general.result} assuming Propositions \ref{prop.Lp.norm.infinite} and \ref{prop.triple.norm.bounded}, which will be established afterwards.

\begin{proof}[Proof of Theorem \ref{general.result}.]
Since Propositions \ref{prop.Lp.norm.infinite} and \ref{prop.triple.norm.bounded} yield the result when $k=n$, let us assume $k<n$.
Let $y\in \overline{B}_1^{(n)}$.
By \eqref{counterexample.noninteger.general} and \eqref{counterexample.noninteger.general.2} we have 
\[
|\nabla u_\alpha(x)|
\leq
C
\big(
|\nabla u_{\alpha,k}(x')|
+
u_{\alpha,k}(x')
\big)
\leq
C
|\nabla u_{\alpha,k}(x')|
\]
for almost every $x\in B_1^{(n)}$, where we have used that the modulus of the gradient of a distance function is equal to $1$ a.e.\ Therefore,
\[
\begin{split}
&\int_{B_1^{(n)}} |\nabla u_\alpha(x)|^{p}\,|x-y|^{\lambda-n}\, dx
\leq
C
\int_{B_{1}^{(k)}}\int_{B_{1}^{(n-k)}} |\nabla u_{\alpha,k}(x')|^{p}\,|x-y|^{\lambda-n}\, dx''dx'
\\
&= 
C
\int_{B_{1}^{(k)}} |\nabla u_{\alpha,k}(x')|^{p}\,|x'-y'|^{\lambda-k}
\int_{B_{1}^{(n-k)}}\left(1+\left(\frac{|x''-y''|}{|x'-y'|}\right)^2\right)^{\frac{\lambda-n}{2}}|x'-y'|^{k-n}\, dx''dx'.
\end{split}
\]
The change of variables $z=\frac{x''-y''}{|x'-y'|}$ yields
\[
\begin{split}
\int_{B_{1}^{(n-k)}}&\left(1+\left(\frac{|x''-y''|}{|x'-y'|}\right)^2\right)^{\frac{\lambda-n}{2}}|x'-y'|^{k-n}\,dx''
\leq
\int_{B_{2/|x'-y'|}^{(n-k)}}\left(1+|z|^2\right)^{\frac{\lambda-n}{2}}\,dz
\\
&
=
C
\int_{0}^{\frac{2}{|x'-y'|}}\left(1+r^2\right)^{\frac{\lambda-n}{2}}r^{n-k-1}\,dr
\leq
C
\int_{0}^{\frac{2}{|x'-y'|}}\max\{1,r\}^{\lambda-n}\,r^{n-k-1}\,dr
\\
&
=
C
\left(
\int_{0}^{1}r^{n-k-1}\,dr
+
\int_{1}^{\frac{2}{|x'-y'|}}r^{\lambda-k-1}\,dr
\right)
\leq C
\end{split}
\]
independently of $y$.
Therefore, 
\[
\begin{split}
\int_{B_1^{(n)}} &|\nabla u_\alpha(x)|^{p}\,|x-y|^{\lambda-n}\, dx \\
&\leq
C \sup_{y'\in \overline{B}_{1}^{(k)}}
\int_{B_{1}^{(k)}} |\nabla u_{\alpha,k}(x')|^{p}\,|x'-y'|^{\lambda-k}\, dx'= C \,\opnorm{\nabla u_{\alpha,k}}_{p,\lambda;B_1^{(k)}},
\end{split}
\]
and Proposition \ref{prop.triple.norm.bounded} applied in $\mathbb{R}^k$, i.e., with $n$ replaced by $k$,
yields $\opnorm{\nabla u_\alpha}_{p,\lambda;B_1^{(n)}}<\infty$.

On the other hand,
\[
\|u_\alpha\|_{L^{q}(B_1^{(n)})}^{q}
=
\int_{B_{3/4}^{(k)}}u_{\alpha,k}^q(x')\int_{B_{\sqrt{3}/3}^{(n-k)}} \xi^q(|x''|) \,dx''dx' =
C \, \|u_{\alpha,k}\|_{L^{q}(B_{3/4}^{(k)})}^q,
\]
which is infinite by Proposition \ref{prop.Lp.norm.infinite} applied with $n=k$,  since (as we pointed out) $u_{\alpha,k}$ given by \eqref{counterexample.noninteger.general.2} has support in $\overline{B}_{3/4}^{(k)}$.
\end{proof}

We devote the rest of the section to the proofs of Propositions \ref{prop.Lp.norm.infinite} and \ref{prop.triple.norm.bounded}. In the sequel we assume 
\[
n-1<\lambda<n.
\] 
Recall that
\begin{equation}\label{function.counterexample}
u_{\alpha,n}(x)=\left(\textrm{dist}(x,\mathcal{C}_{n,\lambda})^{-\alpha}-4^\alpha\right)_{+}
\end{equation}
for $\mathcal{C}_{n,\lambda}=\{0\}\times C_{\gamma}\subset\mathbb{R}^{n-1}\times[-1/2,1/2]$, where $C_{\gamma}$ is the generalized Cantor set with parameter $\gamma=1-2^{1-\frac{1}{n-\lambda}}$ defined in the beginning of this section. We have
\begin{equation}\label{eq.formula.complement.cantor}
\left[-\frac12,\frac12\right]\setminus C_{\gamma}:=\bigcup_{l=1}^{\infty}\bigcup_{m=1}^{2^{l-1}}G_{l,m},
\end{equation}
where the union is disjoint and $G_{l,m}$ are the $2^{l-1}$ gap-intervals introduced in generation~$l$, namely\footnote{We will not need the following precise expression for the gaps, but only to understand their size and self-similar structure.}
\begin{equation}\label{GLM}
\begin{split}
G_{l,m}
&=
\left(\frac{1-\gamma}{2}\right)^{l-1}\left(-\frac{\gamma}{2},\frac{\gamma}{2}\right)
 +
h_{l,m}  
\\
&=
\left(
h_{l,m} -\frac{\gamma}{2}\left(\frac{1-\gamma}{2}\right)^{l-1},
h_{l,m}  +\frac{\gamma}{2}\left(\frac{1-\gamma}{2}\right)^{l-1}
\right),
\end{split}
\end{equation}
where 
 \begin{equation}\label{eq.definition.h_{l,m}}
h_{l,m}
=
-\frac12
+
\frac12
\left(\frac{1-\gamma}{2}\right)^{l-1}
 +
\frac{1+\gamma}{2} \,\sum_{j=1}^{l-1}c_j\left(\frac{1-\gamma}{2}\right)^{j-1}.
 \end{equation}
Here
$c_j\in\{0,1\}$ for all $j=1,\ldots,l-1$, and the index $m\in\{1,2,\ldots, 2^{l-1}\}$ runs through all possible choices of the coefficients $c_1c_2\ldots c_{l-1}$; 
see Figure \ref{figure.gencantor} above.

As a consequence of \eqref{eq.formula.complement.cantor}, we have that $C_{\gamma}$ is a compact set of Lebesgue measure~$0$. In addition, the set
$C_{\gamma}$ is self-similar, that is, $C_{\gamma}=S_1 (C_{\gamma}) \cup S_2(C_{\gamma})$ where $S_1(t)=~(1-~\gamma)t/2-(1+\gamma)/4$ and $S_2(t)=(1-\gamma)t/2+(1+\gamma)/4$. Finally,
the Hausdorff dimension of $C_{\gamma}$ is $-\log{2}/\log((1-\gamma)/2))$, see \cite[Theorem 9.3]{Falconer}. 
Notice that we have chosen $\gamma=1-2^{1-\frac{1}{n-\lambda}}$ such that the Hausdorff dimension of $C_{\gamma}$ is $n-\lambda$.

\subsection{Proof of Proposition \ref{prop.Lp.norm.infinite}: Computation of the $L^{q}$ norm}
In this subsection we provide the proof of Proposition \ref{prop.Lp.norm.infinite}.

\begin{proof}[Proof of Proposition \ref{prop.Lp.norm.infinite}] 
Recall that $\mathcal{C}_{n,\lambda}=\{0\}\times C_{\gamma}\subset\mathbb{R}^{n-1}\times[-1/2,1/2].$
We will denote $x=(x',x'')\in\mathbb{R}^{n-1}\times\mathbb{R}$.
Since $C_{\gamma}$ has zero Lebesgue measure, 
 \eqref{eq.formula.complement.cantor} leads to
\[
\begin{split} 
\| u_{\alpha,n}\|_{L^{q}(B_1^{(n)})}^q
&\geq
\int_{-\frac12}^{\frac12}
\int_{B_{1/4}^{(n-1)}}
\left(\textrm{dist}(x,\mathcal{C}_{n,\lambda})^{-\alpha}-4^\alpha\right)_{+}^{q}\,dx'dx''
\\
&=
\sum_{l=1}^{\infty}\sum_{m=1}^{2^{l-1}}
\int_{G_{l,m}}\int_{B_{1/4}^{(n-1)}} \left(\textrm{dist}(x,\mathcal{C}_{n,\lambda})^{-\alpha}-4^\alpha\right)_{+}^{q}\, dx'dx'', 
\end{split}
\]
where  $G_{l,m}$ are given by \eqref{GLM} and \eqref{eq.definition.h_{l,m}}.

 An affine change of variables $x'=\left(\frac{1-\gamma}{2}\right)^{l-1}t'$, $x''=\left(\frac{1-\gamma}{2}\right)^{l-1}t''+h_{l,m}$ and the self-similarity of $\mathcal{C}_{n,\lambda}$ yield
\[
\begin{split}
&\int_{G_{l,m}}\int_{B_{1/4}^{(n-1)}} 
\left(\textrm{dist}(x,\mathcal{C}_{n,\lambda})^{-\alpha}-4^\alpha\right)_{+}^{q}\, dx'dx''
\\
&=
\left(\frac{1-\gamma}{2}\right)^{(n-\alpha q)(l-1)}
\int_{-\frac{\gamma}{2}}^{\frac{\gamma}{2}}\int_{\left\{|t'|\leq\frac14\left(\frac{2}{1-\gamma}\right)^{l-1}\right\}}
\left(\textrm{dist}(t,\mathcal{C}_{n,\lambda})^{-\alpha}-\left(\frac{1-\gamma}{2}\right)^{\alpha \,(l-1)}4^\alpha\right)_{+}^{q}
\, dt'dt''
\\
&\geq
\left(\frac{1-\gamma}{2}\right)^{(n-\alpha q)(l-1)}
\int_{-\frac{\gamma}{2}}^{\frac{\gamma}{2}}\int_{B_{1/4}^{(n-1)}}
\left(\textrm{dist}(t,\mathcal{C}_{n,\lambda})^{-\alpha}-4^\alpha\right)_{+}^{q}\, dt'dt''
\end{split}
\]
for each $m\in\{1,\ldots,2^{l-1}\}$.
Therefore, adding these $2^{l-1}$ integrals of generation $l$, and then summing in $l$, we have
\[
\| u_{\alpha,n}\|_{L^{q}(B_1^{(n)})}^q
\geq
\sum_{l=1}^{\infty}
\left(
2
\left(\frac{1-\gamma}{2}\right)^{n-\alpha q}
\right)^{l-1}
\int_{-\frac{\gamma}{2}}^{\frac{\gamma}{2}}\int_{B_{1/4}^{(n-1)}}
\left(\textrm{dist}(t,\mathcal{C}_{n,\lambda})^{-\alpha}-4^\alpha\right)_{+}^{q}\, dt'dt''.
\]
Since the integral on the right-hand side is positive, it is enough to show that the series diverges. This happens whenever
\[
2\left(\frac{1-\gamma}{2}\right)^{n-\alpha q}\geq1,
\]
which by our choice of $\gamma$ is equivalent to
\[
n-\alpha q\leq\frac{-\log{2}}{\log{\frac{1-\gamma}{2}}}=n-\lambda.
\]
This inequality holds by hypothesis.
\end{proof}

\subsection{Proof of Proposition \ref{prop.triple.norm.bounded}: Bound for the ``triple norm"}

The proof of Proposition \ref{prop.triple.norm.bounded} has two parts. The first one (Lemma \ref{lemma.reductions} below) shows that in order to bound the triple norm of $\nabla u_{\alpha,n}$, it suffices to only consider points
$y\in\{0\}\times[-1/2,1/2]$, instead of the full $\overline{B_1}^{(n)}$.
More precisely, we prove that
\begin{equation}\label{eq.explanation.proof.triple.lambda}
\begin{split}
\opnorm{\nabla u_{\alpha,n}}_{p,\lambda;B_1^{(n)}}^{p} \leq C 
 \sup_{y'=0, |y''|\leq\frac12}\int_{-\frac12}^{\frac12}\int_{B_{1/4}^{(n-1)}} \textnormal{dist}(x,\mathcal{C}_{n,\lambda})^{-\alpha p-p}\,|x-y|^{\lambda-n}\, dx'dx''.
\end{split}
 \end{equation}
 
Since $C_{\gamma}$ has zero Lebesgue measure, by \eqref{eq.formula.complement.cantor} we can write the outer integral on the right-hand side of \eqref{eq.explanation.proof.triple.lambda} as an infinite sum of integrals over the disjoint gap-intervals $G_{l,m}$
 of decreasing size.

The second part of the proof of 
Proposition \ref{prop.triple.norm.bounded},
and crucial point in the argument, is how to estimate these integrals in terms of the size of the gaps in such a way that the series converges. 
Here, there are two cases to be considered according to the position of the singularity $y$ relative to a given gap: the case when $y$ lies on the closure of a gap (and hence the function $x\mapsto|x-y|^{\lambda-n}$ is singular), and the case when the gap is uniformly away from $y$ (and hence $|x-y|^{\lambda-n}$ can be bounded above and factored out from the integral). We deal with these two cases in Lemmas \ref{lemma.integral.distance.and.y} and \ref{lemma.integral.only.distance} respectively.

\begin{lemma}\label{lemma.integral.distance.and.y}
Let 
$
G
=
\left((1-\gamma)/2\right)^{l-1}\left(-\gamma/2,\gamma/2\right)
+
h
$
 for some $l\geq1$ and $h\in\mathbb{R}$ of the form \eqref{eq.definition.h_{l,m}} (i.e. $G$ is a gap-interval introduced in generation $l$). Assume $y\in\{0\}\times \overline{G}$ and $\lambda-\alpha p-p >0$.
Then, we have that
\begin{equation}\label{eq.lemma.integral.distance.to.both.C.and.y.1}
\begin{split}
\int_{G}
\int_{B_{1/4}^{(n-1)}}
 \textnormal{dist}(x,\mathcal{C}_{n,\lambda})^{-\alpha p-p}&\,|x-y|^{\lambda-n}\, dx'dx''
\\
&\leq
 C \,
 \left(
\left(
\frac{1-\gamma}{2}
\right)
^{(\lambda-\alpha p-p)(l-1)}
+
l
\left(
\frac{1-\gamma}{2}
\right)
^{l-1}
\right)
\end{split}
\end{equation}
for a constant $C$ depending only on $n, p,\lambda,$ and $\alpha$.
\end{lemma}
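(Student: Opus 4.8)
The plan is to exploit that, even though $C_\gamma$ is fractal, the distance to $\mathcal C_{n,\lambda}=\{0\}\times C_\gamma$ becomes elementary once $x''$ is restricted to the gap $G=(a,b)$. Write $\rho:=\big((1-\gamma)/2\big)^{l-1}$, so that $b-a=\gamma\rho$ and the asserted bound is $C\big(\rho^{\lambda-\alpha p-p}+l\rho\big)$. Since a point of an open interval disjoint from $C_\gamma$ has its nearest $C_\gamma$-point at an endpoint of that interval, for $x=(x',x'')$ with $x''\in G$ we have
\[
\textrm{dist}(x,\mathcal C_{n,\lambda})^2=|x'|^2+\min(x''-a,\,b-x'')^2=\min\big(|x-A|^2,\,|x-B|^2\big),
\]
where $A:=(0,a)$ and $B:=(0,b)$ in $\mathbb R^{n-1}\times\mathbb R$. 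Setting $\beta:=\alpha p+p<\lambda$, this yields $\textrm{dist}(x,\mathcal C_{n,\lambda})^{-\beta}\le|x-A|^{-\beta}+|x-B|^{-\beta}$ on the domain of integration, and since $y=(0,y'')$ with $y''\in[a,b]$ satisfies $|y-A|\le\gamma\rho$ and $|y-B|\le\gamma\rho$, by symmetry it is enough to prove
\[
\mathcal I:=\int_{B_{1/4}^{(n-1)}\times G}|x-A|^{-\beta}\,|x-y|^{\lambda-n}\,dx\ \le\ C\big(\rho^{\lambda-\beta}+l\rho\big).
\]

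I would then split $\mathcal R:=B_{1/4}^{(n-1)}\times G$ into $\mathcal R_{\mathrm{near}}:=\{x\in\mathcal R:|x-A|\le 2\gamma\rho\}$ and $\mathcal R_{\mathrm{far}}:=\mathcal R\setminus\mathcal R_{\mathrm{near}}$. On $\mathcal R_{\mathrm{near}}$ one has $|x-y|\le 3\gamma\rho$; splitting further according to whether $|x-A|\le|x-y|$, and using $\lambda-n<0$, the integrand is dominated by $|x-A|^{\lambda-n-\beta}$ on one piece and by $|x-y|^{\lambda-n-\beta}$ on the other, so integrating over $B_{2\gamma\rho}(A)$ and $B_{3\gamma\rho}(y)$ and using $\lambda-n-\beta>-n$ (precisely the hypothesis $\beta<\lambda$, i.e.\ $\alpha<(\lambda-p)/p$) gives $\int_{\mathcal R_{\mathrm{near}}}\le C(\gamma\rho)^{\lambda-\beta}\le C\rho^{\lambda-\beta}$.

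On $\mathcal R_{\mathrm{far}}$ we have $|x-A|>2\gamma\rho\ge 2|A-y|$, hence $|x-y|\ge\tfrac12|x-A|$ and the integrand is $\le C|x-A|^{\lambda-n-\beta}$; moreover, since $|x-A|^2=|x'|^2+(x''-a)^2$ with $|x''-a|\le\gamma\rho$, on $\mathcal R_{\mathrm{far}}$ also $|x'|\ge\tfrac{\sqrt3}{2}|x-A|\ge\sqrt3\,\gamma\rho$, so $|x-A|^{\lambda-n-\beta}\le|x'|^{\lambda-n-\beta}$. Integrating first in $x''\in G$ (length $\gamma\rho$) and then in $x'$ in polar coordinates reduces this piece to $C\,\gamma\rho\int_{\sqrt3\,\gamma\rho}^{1/4}s^{\lambda-\beta-2}\,ds$, and distinguishing the cases $\lambda-\beta>1$, $\lambda-\beta=1$, $\lambda-\beta<1$ one checks this last integral is $\le C\big(\rho^{\lambda-\beta-1}+l\big)$, the equality case producing the logarithm $\log(1/\rho)\asymp l$. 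Hence $\int_{\mathcal R_{\mathrm{far}}}\le C(\rho^{\lambda-\beta}+l\rho)$, and adding the two contributions gives $\mathcal I\le C(\rho^{\lambda-\beta}+l\rho)$, which is the claim since $\rho^{\lambda-\beta}=\big((1-\gamma)/2\big)^{(\lambda-\alpha p-p)(l-1)}$ and $\rho=\big((1-\gamma)/2\big)^{l-1}$.

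The step I expect to be the actual content is the first one: recognizing that, because a point of a gap realizes its distance to the whole fractal $C_\gamma$ at one of the two gap endpoints, the distance function collapses inside $G$ to the distance to the two points $A$ and $B$. Once this is in hand the estimate becomes a matter of elementary one- and $n$-dimensional integrals, with the case analysis for $\lambda-\alpha p-p$ (and the borderline logarithm, which is exactly the origin of the second term $l\big((1-\gamma)/2\big)^{l-1}$) being the main, if routine, computation.
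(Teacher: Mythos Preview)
Your proof is correct and reaches the same conclusion through a genuinely different decomposition than the paper's. Both arguments begin from the same key observation---that for $x''\in G$ the distance $\textnormal{dist}(x,\mathcal C_{n,\lambda})$ collapses to the distance to one of the two endpoints---and both end with the same case analysis on $\lambda-\alpha p-p$ versus $1$ producing the logarithmic term. The difference lies in between. The paper keeps the $\min$ intact and splits the $x''$-interval into three subintervals (near the left endpoint, near $y''$, near the right endpoint), passes to cylindrical coordinates, and then invokes the monotonicity result of Lemma~\ref{lemma.monotonicity} to reduce to the symmetric location $y''=h$ before estimating. You instead dominate $\min^{-\beta}\le|x-A|^{-\beta}+|x-B|^{-\beta}$ at the outset, handle one term by symmetry, and replace the monotonicity lemma by a direct near/far splitting around $A$ together with the triangle inequality $|x-y|\ge\tfrac12|x-A|$ on the far set. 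Your route is more elementary and entirely self-contained (it does not rely on Lemma~\ref{lemma.monotonicity}); the paper's route is a bit more systematic and reuses its general monotonicity tool. One minor expository point: your line ``$|x'|\ge\tfrac{\sqrt3}{2}|x-A|\ge\sqrt3\gamma\rho$, so $|x-A|^{\lambda-n-\beta}\le|x'|^{\lambda-n-\beta}$'' conflates two facts---the displayed lower bound on $|x'|$ gives the integration limit, while the integrand bound follows simply from $|x'|\le|x-A|$ and the negativity of the exponent. Both are correct; separating them would make the argument read more cleanly.
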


\begin{proof}
Denote $a=\left(\gamma/2\right)\left((1-\gamma)/2\right)^{l-1}$ so that $G=(-a,a)+h$. To relate $\textnormal{dist}(x,\mathcal{C}_{n,\lambda})$ and $|x-y|$ we consider the midpoints between $y''$ and $h+a$, and between $y''$ and $h-a$.
In this way,
we have the bound
\begin{equation}\label{Lemma.y.belongs.to.G.eq1}
\begin{split}
\int_{ G}
\int_{B_{1/4}^{(n-1)}} 
& \textnormal{dist}(x,\mathcal{C}_{n,\lambda})^{-\alpha p-p}\,|x-y|^{\lambda-n}\, dx'dx''
\\
&\leq
\int_{
\frac{y''+h+a}{2}
}
^{
h+a
}
\int_{B_{1/4}^{(n-1)}} 
 \textnormal{dist}(x,\mathcal{C}_{n,\lambda})^{\lambda-n-\alpha p-p}
\, dx'dx''
\\
&\qquad+
\int_{
\frac{y''+h-a}{2}
}
^{
\frac{y''+h+a}{2}
}
\int_{B_{1/4}^{(n-1)}} 
 |x-y|^{\lambda-n-\alpha p-p}
 \, dx'dx''
 \\
&\qquad+
\int_{
h-a
}
^{
\frac{y''+h-a}{2}
}
\int_{B_{1/4}^{(n-1)}} 
 \textnormal{dist}(x,\mathcal{C}_{n,\lambda})^{\lambda-n-\alpha p-p}
 \, dx'dx''=I_1+I_2+I_3.
\end{split}
\end{equation}

Let us estimate $I_1$ first. For this, notice that whenever $x''\in G$, we have
\begin{equation}\label{formula.for.distance.Cantor}
 \textnormal{dist}(x,\mathcal{C}_{n,\lambda})^2
 =
 |x'|^2
 +
 \min
 \left\{
 (h+a-x'')^2, (x''-h+a)^2
 \right\}.
\end{equation}
Therefore,
\[
I_1
=
\int_{
\frac{y''+h+a}{2}
}
^{
h+a
}
\int_{B_{1/4}^{(n-1)}} 
\left(
 |x'|^2
 +
 (h+a-x'')^2
 \right)^\frac{\lambda-n-\alpha p-p}{2}
\, dx'dx''
\]
and a change to cylindrical coordinates and the change of variables $t=h+a-x''$
yield
\begin{equation}\label{Lemma.y.belongs.to.G.I1}
I_1
=
C
\int_{0}^{
\frac{h+a-y''}{2}
}
\int_{0}^{\frac{1}{4}}
\left(
 r^2
 +
 t^2
 \right)^\frac{\lambda-n-\alpha p-p}{2}
r^{n-2}\, drdt.
\end{equation}
Similarly,
\begin{equation}\label{Lemma.y.belongs.to.G.I3}
I_3
=
C
\int_
{
\frac{h-a-y''}{2}
}
^{0}
\int_{0}^{\frac{1}{4}}
\left(
 r^2
 +
 t^2
 \right)^\frac{\lambda-n-\alpha p-p}{2}
r^{n-2}\, drdt.
\end{equation}
On the other hand, since $y'=0$, we have 
\begin{equation}\label{Lemma.y.belongs.to.G.I2}
\begin{split}
I_2
&=
\int_{
\frac{y''+h-a}{2}
}
^{
\frac{y''+h+a}{2}
}
\int_{B_{1/4}^{(n-1)}} 
\left( |x'|^2+(x''-y'')^2\right)^\frac{\lambda-n-\alpha p-p}{2}
 \, dx'dx''
 \\
&=
C 
\int_{
\frac{y''+h-a}{2}
}
^{
\frac{y''+h+a}{2}
}
\int_{0}^\frac{1}{4} 
\left( r^2+(x''-y'')^2\right)^\frac{\lambda-n-\alpha p-p}{2}
r^{n-2} \, drdx''
 \\
&=
C 
\int_{
\frac{h-a-y''}{2}
}
^{
\frac{h+a-y''}{2}
}
\int_{0}^\frac{1}{4} 
\left( r^2+t^2\right)^\frac{\lambda-n-\alpha p-p}{2}
r^{n-2} \, drdt,
\end{split}
\end{equation}
after applying a change to cylindrical coordinates and the change of variables $t=x''-y''$. 

Then, \eqref{Lemma.y.belongs.to.G.eq1} and \eqref{Lemma.y.belongs.to.G.I1}--\eqref{Lemma.y.belongs.to.G.I2}, and the change of variables $z_1=2t-h+y''$, $z_2=2r$ give
\[
\begin{split}
\int_{ G}
\int_{B_{1/4}^{(n-1)}} 
& \textnormal{dist}(x,\mathcal{C}_{n,\lambda})^{-\alpha p-p}\,|x-y|^{\lambda-n}\, dx'dx''
\\
&\leq
2\,C 
\int_{
\frac{h-a-y''}{2}
}
^{
\frac{h+a-y''}{2}
}
\int_{0}^\frac{1}{4} 
\left( r^2+t^2\right)^\frac{\lambda-n-\alpha p-p}{2}
r^{n-2} \, drdt
\\
&=
C \,2^{1-\lambda+\alpha p+p}
\int_{-a}^{a}
\int_{0}^{\frac12} 
\left( (z_1+h-y'')^2+z_2^2\right)^\frac{\lambda-n-\alpha p-p}{2}
 z_2^{n-2} \, dz_2dz_1
 \\
&=
C\, 2^{1-\lambda+\alpha p+p}\, J(y''-h,0),
\end{split}
\]
for
\[
J(y)
=
\int_{-a}^{a}
\int_{0}^{\frac12} 
|z-y|^{\lambda-n-\alpha p-p}
 z_2^{n-2} \, dz_2dz_1.
\]
Here we can apply Lemma \ref{lemma.monotonicity} in dimension 2,
with $\Omega=[-a,a]\times[0,1/2]$, $\theta=n-\lambda+\alpha p+p$, $h(z)=z_2^{n-2}$, and $\eta=0$.
Therefore,   $J$ is non-increasing with respect to $y_1$ in $[0,\infty)$, and non-decreasing with respect to $y_1$ in $(-\infty,0]$. In particular, $J(y''-h,0)\leq J(0)$, with an equality for $y''=h$. We conclude
\begin{equation}\label{Lemma.y.belongs.to.G.ineq.almost.there}
\begin{split}
\int_{ G}
\int_{B_{1/4}^{(n-1)}} 
& \textnormal{dist}(x,\mathcal{C}_{n,\lambda})^{-\alpha p-p}\,|x-y|^{\lambda-n}\, dx'dx''
\\
&\leq
C \,2^{1-\lambda+\alpha p+p}
\int_{-a}^{a}
\int_{0}^{\frac12} 
\left( z_1^2+z_2^2\right)^\frac{\lambda-n-\alpha p-p}{2}
 z_2^{n-2} \, dz_2dz_1
\end{split}
\end{equation}
independently of $y$.

We estimate now the integral on the right-hand side of \eqref{Lemma.y.belongs.to.G.ineq.almost.there} by applying the change of variables $x_1=z_1$, $z_2=|x_1|x_2$
\[
\begin{split}
\int_{-a}^{a}
\int_{0}^{\frac12} &
\left( z_1^2+z_2^2\right)^\frac{\lambda-n-\alpha p-p}{2}
 z_2^{n-2} \, dz_2dz_1
 \\
 &=
 \int_{-a}^{a} |x_1|^{\lambda-\alpha p-p-1}
\int_{0}^{\frac{1}{2|x_1|}} 
\left( 1+x_2^2\right)^\frac{\lambda-n-\alpha p-p}{2} 
 x_2^{n-2} \, dx_2dx_1.
\end{split}
\]
Notice that $|x_1|<a<1/2$, and therefore we can use $1+x_2^2\geq\max\{1,x_2^2\}$ to show that
\[
\begin{split}
\int_{0}^{\frac{1}{2|x_1|}} 
\left(1+x_2^2\right)^\frac{\lambda-n-\alpha p-p}{2} x_2^{n-2}\, dx_2
&\leq
\int_{0}^{1} 
x_2^{n-2}\, dx_2
+
\int_{1}^{\frac{1}{2|x_1|}} 
x_2^{\lambda-\alpha p-p-2}\, dx_2.
\end{split}
\]
Assume first that $\lambda-\alpha p-p\neq1$ and recall that $\lambda-\alpha p-p>0$ by hypothesis.
Then,
\[
\begin{split}
\int_{ G}
\int_{B_{1/4}^{(n-1)}} 
& \textnormal{dist}(x,\mathcal{C}_{n,\lambda})^{-\alpha p-p}\,|x-y|^{\lambda-n}\, dx'dx''
\\
&\leq
C
\int_{
0
}
^{
a
}
x_1^{\lambda-\alpha p-p-1}
\left(
1+x_1^{-\lambda+\alpha p+p+1}
\right)
dx_1
\leq
 C 
 \left(a^{\lambda-\alpha p-p}+a\right).
\end{split}
\]
On the other hand, if $\lambda-\alpha p-p=1$, then
\[
\begin{split}
\int_{ G}
\int_{B_{1/4}^{(n-1)}} 
 \textnormal{dist}(x,\mathcal{C}_{n,\lambda})^{-\alpha p-p}&\,|x-y|^{\lambda-n}\, dx'dx''
\\
&\leq
 C 
\int_{
0
}
^{
a
}
\left(
1
-
\log{2x_1}
\right)
dx_1
\leq
C \left(a+a|\log a|\right)
.
\end{split}
\]
In both cases,
\[
\int_{ G}
\int_{B_{1/4}^{(n-1)}} 
 \textnormal{dist}(x,\mathcal{C}_{n,\lambda})^{-\alpha p-p}\,|x-y|^{\lambda-n}\, dx'dx''
\leq
C \big(a^{\lambda-\alpha p-p}+a(1+|\log a|)\big)
\]
and the lemma is proved. 
\end{proof}

The following lemma will allow us to control the triple norm when $y$ belongs to a gap different from $G$. Notice that the exponents on the right-hand side of the estimate are different in Lemmas \ref{lemma.integral.distance.and.y} and \ref{lemma.integral.only.distance}.
\begin{lemma}\label{lemma.integral.only.distance}
Let 
$
G
=
\left((1-\gamma)/2\right)^{l-1}\left(-\gamma/2,\gamma/2\right)
+
h
$
 for some $l\geq1$ and $h\in\mathbb{R}$ of the form \eqref{eq.definition.h_{l,m}} (i.e. $G$ is a gap-interval introduced in generation $l$). Assume $n-\alpha p-p >0$. Then, we have that
\[
\int_{G}
\int_{B_{1/4}^{(n-1)}} 
 \textnormal{dist}(x,\mathcal{C}_{n,\lambda})^{-\alpha p-p}\, dx'dx''
 \leq
C
\left(
\left(
\frac{1-\gamma}{2}
\right)
^{(n-\alpha p-p)(l-1)}
+l\left(
\frac{1-\gamma}{2}
\right)
^{l-1}
\right)
\]
for a constant $C$ depending only on $n, p,$ and $\alpha$.
\end{lemma}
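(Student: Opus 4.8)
The plan is to compute the integral $\int_{G}\int_{B_{1/4}^{(n-1)}} \textnormal{dist}(x,\mathcal{C}_{n,\lambda})^{-\alpha p-p}\, dx'dx''$ directly, exploiting the explicit formula \eqref{formula.for.distance.Cantor} for the distance to $\mathcal{C}_{n,\lambda}$ when the second coordinate $x''$ lies in a gap $G=(-a,a)+h$ with $a=(\gamma/2)((1-\gamma)/2)^{l-1}$. As in the proof of Lemma \ref{lemma.integral.distance.and.y}, on $G$ we have $\textnormal{dist}(x,\mathcal{C}_{n,\lambda})^2=|x'|^2+\min\{(h+a-x'')^2,(x''-h+a)^2\}$, so splitting $G$ at its midpoint $h$ and passing to cylindrical coordinates in $x'$ (picking up a factor $r^{n-2}$) together with the change of variables $t=h+a-x''$ (resp. $t=x''-h+a$) on each half reduces the problem to estimating
\[
C\int_{0}^{a}\int_{0}^{1/4}\left(r^2+t^2\right)^{\frac{-\alpha p-p}{2}}r^{n-2}\,dr\,dt.
\]

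Next I would scale out the size of the gap. Writing $t=a\tau$ and then $r=a\tau\rho$ (so the inner radial variable is rescaled by the current value of $t$), or alternatively the single substitution $r=|t|\sigma$ as in the previous lemma, turns this into $a^{\,n-\alpha p-p}$ times an integral over a region whose outer radial bound is $1/(4a)\geq 1$ (since $a<1/2$). The integrand, after using $1+\sigma^2$-type bounds, separates into a part integrated over $[0,1]$ — which contributes a pure constant — and a tail over $[1,1/(4a)]$ whose integrand behaves like $\sigma^{n-\alpha p-p-2}$... wait, one must be careful: since $n-\alpha p-p>0$ is the only sign hypothesis, this tail may converge, diverge polynomially, or (in the borderline case $n-\alpha p-p=1$) produce a logarithm. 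Handling the three cases exactly as in Lemma \ref{lemma.integral.distance.and.y} — $n-\alpha p-p\neq 1$ giving a bound $C(a^{\,n-\alpha p-p}+a)$, and $n-\alpha p-p=1$ giving $C(a+a|\log a|)$ — one obtains in all cases
\[
\int_{G}\int_{B_{1/4}^{(n-1)}} \textnormal{dist}(x,\mathcal{C}_{n,\lambda})^{-\alpha p-p}\, dx'dx''\leq C\big(a^{\,n-\alpha p-p}+a(1+|\log a|)\big).
\]

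Finally I would translate this back into the statement in terms of the generation $l$. Since $a=(\gamma/2)((1-\gamma)/2)^{l-1}$, one has $a^{\,n-\alpha p-p}\leq C((1-\gamma)/2)^{(n-\alpha p-p)(l-1)}$ and $a(1+|\log a|)\leq C\,l\,((1-\gamma)/2)^{l-1}$, the latter because $|\log a|\leq C + C(l-1)|\log((1-\gamma)/2)|\leq C l$. This gives precisely the claimed bound. The structure here is essentially identical to Lemma \ref{lemma.integral.distance.and.y} but with the exponent $\lambda$ absent from the power of the distance — the weight $|x-y|^{\lambda-n}$ having been dropped because $y$ is bounded away from $G$ — which is exactly why the exponent on the right-hand side is $n-\alpha p-p$ rather than $\lambda-\alpha p-p$. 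I expect no genuine obstacle: the only mild subtlety is bookkeeping the borderline case $n-\alpha p-p=1$ and keeping the constant $C$ independent of $l$ and of the gap, which the explicit scaling makes transparent.
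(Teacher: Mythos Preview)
Your proposal is correct and follows essentially the same approach as the paper: reduce the integral via \eqref{formula.for.distance.Cantor} and cylindrical coordinates to $2C\int_0^a\int_0^{1/4}(r^2+t^2)^{(-\alpha p-p)/2}r^{n-2}\,dr\,dt$, then estimate exactly as in the final part of Lemma~\ref{lemma.integral.distance.and.y} with $\lambda$ replaced by $n$. The paper's proof is terser --- it simply points to Lemma~\ref{lemma.integral.distance.and.y} with $\lambda=n$ --- but the content is identical to what you wrote out.
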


\begin{proof}
Let $a=(\gamma/2)\left((1-\gamma)/2\right)^{l-1}$ so that $G=(-a,a)+h$.
Using cylindrical coordinates and \eqref{formula.for.distance.Cantor}, we have
\[
\begin{split}
\int_{G}
\int_{B_{1/4}^{(n-1)}} &
 \textnormal{dist}(x,\mathcal{C}_{n,\lambda})^{-\alpha p-p}\, dx'dx''
\\
&=
C
\bigg(
\int_{
h
}
^{
h+a
}
\int_{0}^{\frac14} 
\left(r^2+
\left(h+a-x''\right)^2
\right)^\frac{-\alpha p-p}{2}r^{n-2}\, drdx''
\\
&\qquad+
\int_{
h-a
}
^{
h
}
\int_{0}^{\frac14} 
\left(r^2+
\left(x''-h+a\right)^2
\right)^\frac{-\alpha p-p}{2}r^{n-2}\, drdx''
\bigg)
\\
&=
2C
\int_{
0
}
^{
a
}
\int_{0}^{\frac14} 
\left(r^2+\left(z''\right)^2\right)^\frac{-\alpha p-p}{2}r^{n-2}\, drdz''.
\end{split}
\]
Notice that this integral is of the same type as the one in the right-hand side of \eqref{Lemma.y.belongs.to.G.ineq.almost.there}, taking $\lambda=n$ there. It is now easy to check that one can proceed as in the final part of Lemma \ref{lemma.integral.distance.and.y} (taking $\lambda=n$ there) and complete the proof.
\end{proof}

As mentioned before, the following lemma will also be used in the first part of the proof of Proposition \ref{prop.triple.norm.bounded} in order to show that
to bound the supremum in the definition of $\opnorm{\nabla u_{\alpha,n}}_{p,\lambda;B_1^{(n)}}$,
it suffices to take $y\in\{0\}\times[-1/2,1/2]$.

\begin{lemma}\label{lemma.reductions}
Let $u_{\alpha,n}$ be given by \eqref{function.counterexample}. Then,
 \begin{equation}\label{eq.lemma.reductions}
\begin{split}
\opnorm{\nabla u_{\alpha,n}}_{p,\lambda;B_1^{(n)}}^{p} &= \sup_{y\in \overline{B}_1^{(n)}}
 \int_{B_1^{(n)}} |\nabla u_{\alpha,n}(x)|^{p}\,|x-y|^{\lambda-n}\, dx\\
&\leq C 
 \sup_{y'=0, |y''|\leq\frac12}\int_{-\frac12}^{\frac12}\int_{B_{1/4}^{(n-1)}} \textnormal{dist}(x,\mathcal{C}_{n,\lambda})^{-\alpha p-p}\,|x-y|^{\lambda-n}\, dx'dx''
\end{split}
 \end{equation}
 for some constant $C$ depending only on $n$, $p$, and $\alpha$.
\end{lemma}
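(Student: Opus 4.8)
The plan is to combine a pointwise bound for $|\nabla u_{\alpha,n}|$ with repeated use of the monotonicity result, Lemma~\ref{lemma.monotonicity}. Since the gradient of a distance function has modulus $1$ almost everywhere and $u_{\alpha,n}(x)=f(\textrm{dist}(x,\mathcal{C}_{n,\lambda}))$ with $f(s)=(s^{-\alpha}-4^{\alpha})_{+}$, we get $|\nabla u_{\alpha,n}(x)|=\alpha\,\textrm{dist}(x,\mathcal{C}_{n,\lambda})^{-\alpha-1}$ on the open set $\{\textrm{dist}(x,\mathcal{C}_{n,\lambda})<1/4\}$ and $|\nabla u_{\alpha,n}(x)|=0$ elsewhere. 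Hence $|\nabla u_{\alpha,n}(x)|^{p}\le C\,\textrm{dist}(x,\mathcal{C}_{n,\lambda})^{-\alpha p-p}$ on that set, which, since $\mathcal{C}_{n,\lambda}\subset\{0\}\times[-1/2,1/2]$, is contained in $\Omega_{0}:=B_{1/4}^{(n-1)}\times(-3/4,3/4)$. It therefore suffices to bound $\sup_{y\in\overline{B}_{1}^{(n)}}J(y)$, where
\[
J(y):=\int_{\Omega_{0}}\textrm{dist}(x,\mathcal{C}_{n,\lambda})^{-\alpha p-p}\,|x-y|^{\lambda-n}\,dx .
\]

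Next I would reduce to $y'=0$. Writing $x=(x',x'')\in\mathbb{R}^{n-1}\times\mathbb{R}$, the non-negative weight $h(x):=\textrm{dist}(x,\mathcal{C}_{n,\lambda})^{-\alpha p-p}=\big(|x'|^{2}+\textrm{dist}(x'',C_{\gamma})^{2}\big)^{-(\alpha p+p)/2}$ is, for each fixed value of the other coordinates, an even function of every coordinate $x_{i}'$ of $x'$ which is non-increasing in $|x_{i}'|$; the domain $\Omega_{0}$ is a product set, convex and symmetric with respect to each hyperplane $\{x_{i}'=0\}$; and $\theta:=n-\lambda>0$. Therefore the reflection hypotheses in parts (b1) and (b2) of Lemma~\ref{lemma.monotonicity} hold with $\eta=0$ in every coordinate direction of $x'$, and combined with part (a) of that lemma they show that, with all other variables frozen, $J$ is non-increasing in $y_{1}'$ on $[0,\infty)$ and non-decreasing on $(-\infty,0]$, hence maximal at $y_{1}'=0$. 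Iterating over the $n-1$ coordinates of $x'$ gives $\sup_{y\in\overline{B}_{1}^{(n)}}J(y)=\sup_{|y''|\le1}J\big((0,y'')\big)$.

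Finally I would split the $x''$-integration in $\Omega_{0}$ into the middle slab $\Omega_{\mathrm{m}}:=B_{1/4}^{(n-1)}\times[-1/2,1/2]$ and the two overhangs $\Omega_{+}:=B_{1/4}^{(n-1)}\times(1/2,3/4)$ and $\Omega_{-}:=B_{1/4}^{(n-1)}\times(-3/4,-1/2)$, and let $M$ denote the right-hand side of \eqref{eq.lemma.reductions}. On $\Omega_{\pm}$ the nearest point of $\mathcal{C}_{n,\lambda}$ is the single point $P_{\pm}=(0,\pm1/2)\in\mathcal{C}_{n,\lambda}$, so $\textrm{dist}(x,\mathcal{C}_{n,\lambda})=|x-P_{\pm}|$ there; splitting $\Omega_{\pm}$ according to whether $|x-P_{\pm}|\le|x-y|$ or not, the integrand of $J$ is bounded by $|x-P_{\pm}|^{-(\alpha p+p+n-\lambda)}$ on the first piece and by $|x-y|^{-(\alpha p+p+n-\lambda)}$ on the second, and since $(\alpha p+p)+(n-\lambda)<n$ by \eqref{conditions.alpha}, both integrals are bounded by a constant over the bounded set $\Omega_{\pm}$, uniformly in $y$. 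On $\Omega_{\mathrm{m}}$ the supremum of the last coordinate equals $1/2$, so part (a) of Lemma~\ref{lemma.monotonicity} shows the corresponding integral is non-increasing in $y''$ for $y''\ge1/2$ and non-decreasing for $y''\le-1/2$; hence its supremum over $|y''|\le1$ with $y'=0$ equals its supremum over $|y''|\le1/2$, which is $M$. Adding the three pieces yields $\sup_{y\in\overline{B}_{1}^{(n)}}J(y)\le M+C$, and since $M\ge\int_{\Omega_{\mathrm{m}}}\textrm{dist}(x,\mathcal{C}_{n,\lambda})^{-\alpha p-p}|x|^{\lambda-n}\,dx>0$, the additive constant is absorbed into a multiplicative one and \eqref{eq.lemma.reductions} follows. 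The main obstacle is the overhang region: one must notice that the support of $\nabla u_{\alpha,n}$ protrudes out of $\{|x''|\le1/2\}$, recognize that there the singular set degenerates to a single point, estimate the resulting product of two negative powers of distances, and carry out the reductions in the right order — reducing $y'$ first on $\Omega_{0}$, then splitting off the overhangs, and only then reducing $y''$, since $y''=\pm1/2$ is interior to the $x''$-range of $\Omega_{0}$ and Lemma~\ref{lemma.monotonicity}(a) applies to the $y''$ variable only after one has passed to $\Omega_{\mathrm{m}}$.
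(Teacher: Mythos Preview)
Your proof is correct but proceeds differently from the paper's. Both begin with the pointwise bound $|\nabla u_{\alpha,n}|^{p}\le C\,\textrm{dist}(\cdot,\mathcal{C}_{n,\lambda})^{-\alpha p-p}$ on a slab and reduce $y'$ to $0$ via Lemma~\ref{lemma.monotonicity}; the paper does this in one stroke after a rotation putting $y'$ on an axis, while you iterate over the $n-1$ coordinates. The substantive difference is in handling the region $\{|x''|>1/2\}$ and the reduction of $y''$. The paper keeps the full $x''$-slab $[-1,1]$, reduces $y''$ to $[-1/2,1/2]$ using part~(b1) of Lemma~\ref{lemma.monotonicity} with $\eta=1/2$ (exploiting that $h$ is non-increasing in $x''$ for $x''\ge 1/2$), and only then folds the outer $x''$-range back onto $[-1/2,1/2]$ by a direct reflection argument. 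You instead split off the overhangs $\Omega_{\pm}$ first, bound them by a constant using that $\alpha p+p+n-\lambda<n$, and then reduce $y''$ on $\Omega_{\mathrm m}$ via part~(a) only. Your route is perhaps more elementary in that it never invokes the reflection hypothesis (b1) in the $x''$-direction, but it comes at a small price: your overhang bound and the absorption of the additive constant into $M$ both use \eqref{conditions.alpha} and the value of $\lambda$, so your final constant $C$ depends on $\lambda$ as well as on $n,p,\alpha$, whereas the paper's argument yields a constant depending only on $n,p,\alpha$ as asserted in the lemma. This discrepancy is harmless for the use of the lemma in the proof of Proposition~\ref{prop.triple.norm.bounded}, where $\lambda$ is fixed and \eqref{conditions.alpha} is assumed.
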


We postpone the proof of Lemma \ref{lemma.reductions} until the end of the section and proceed instead with the proof of Proposition \ref{prop.triple.norm.bounded}.
The idea is to ``cluster" the gaps according to their distance from $y$, and then use Lemmas \ref{lemma.integral.distance.and.y} and  \ref{lemma.integral.only.distance}.

\begin{proof}[Proof of Proposition \ref{prop.triple.norm.bounded}.]
As a result of Lemma \ref{lemma.reductions} we can assume that $y'=0$ and $y''
\in[-1/2,1/2]$. To simplify the argument below, by Fatou's lemma
we may assume that $y''$ is not the midpoint of any gap $G_{l,m}$ given by \eqref{GLM} and \eqref{eq.definition.h_{l,m}}.

Each generation $l\geq1$ introduces $2^{l-1}$ gaps $G_{l,m}$, and we can write
\begin{equation}\label{integral.triple.norm.written.in.sums.of.Glm}
\begin{split}
\int_{-\frac12}^{\frac12}\int_{B_{1/4}^{(n-1)}} &\textrm{dist}(x,\mathcal{C}_{n,\lambda})^{-\alpha p-p}\,|x-y|^{\lambda-n}\, dx'dx''\\
&=\sum_{l=1}^{\infty}\sum_{m=1}^{2^{l-1}}
\int_{G_{l,m}}\int_{B_{1/4}^{(n-1)}} \textrm{dist}(x,\mathcal{C}_{n,\lambda})^{-\alpha p-p}\,|x-y|^{\lambda-n}\, dx'dx''.
\end{split}
\end{equation}
Recall that the length of the gap $G_{l,m}$ is $\gamma\left((1-\gamma)/2\right)^{l-1}$.

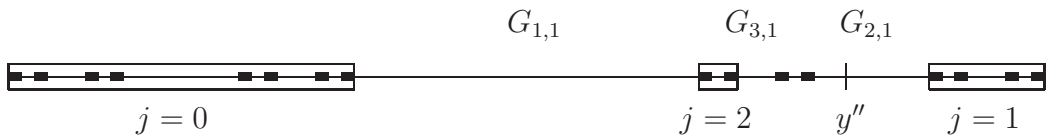
\begin{figure}[t]

\setlength{\unitlength}{.17cm}
\begin{picture}(80,20)(0,-5)

 \put(0,0){\line(1,0){81}}
 
 
 \multiput(0,-1)(54,0){1}{\line(1,0){27}} 
 \multiput(0,-1)(27,0){2}{\line(0,1){2}} 
 \multiput(0,1)(54,0){1}{\line(1,0){27}} 

 \multiput(72,-1)(18,0){1}{\line(1,0){9}} 
 \multiput(72,-1)(9,0){2}{\line(0,1){2}} 
 \multiput(72,1)(18,0){1}{\line(1,0){9}} 

 \multiput(54,-1)(9,0){1}{\line(1,0){3}} 
 \multiput(54,-1)(3,0){2}{\line(0,1){2}} 
 \multiput(54,1)(9,0){1}{\line(1,0){3}} 
 

 \put(65.5,-1){\line(0,1){2}}


 \put(64.7,-4){$y''$}
 
 \put(39,3.5){$G_{1,1}$} 
 \put(65,3.5){$G_{2,1}$} 
 \put(56,3.5){$G_{3,1}$} 

 \put(10,-4){$j=0$} 
 \put(73.5,-4){$j=1$}
 \put(52.5,-4){$j=2$} 
 
 

 \linethickness{3pt}
 
 \multiput(0,0)(2,0){2}{\line(1,0){1}} 
 \multiput(6,0)(2,0){2}{\line(1,0){1}} 
 \multiput(18,0)(2,0){2}{\line(1,0){1}} 
 \multiput(24,0)(2,0){2}{\line(1,0){1}} 
 \multiput(54,0)(2,0){2}{\line(1,0){1}} 
 \multiput(60,0)(2,0){2}{\line(1,0){1}} 
 \multiput(72,0)(2,0){2}{\line(1,0){1}} 
 \multiput(78,0)(2,0){2}{\line(1,0){1}} 

\end{picture}
\caption{Gaps up to generation $l=4$, classified according to
\eqref{eq.definition.indices.clusters}.}
\label{fig.clusters.v2}
\end{figure}

We classify the $2^{l-1}$ gaps of generation $l\geq2$ according to their distance from $y''$ as follows (see Figures \ref{figure.gencantor} and \ref{fig.clusters.v2}):

\begin{enumerate}
 
 \item We split $(-1/2,1/2)$ into two halves, and notice that there are exactly $2^{l-2}$ gaps of generation $l$ in each half. We denote the 
 gaps on the half-interval which does not contain $y''$ by $G_{l,m}^{0}$, with $m=1,2,\ldots,2^{l-2}$. Since half of 
 $G_{1,1}=(-\gamma/2,\gamma/2)$ lies between any of these gaps and $y''$, and the length of $G_{1,1}$ is $\gamma$, we have that
\[
 \textrm{dist}(y'', G_{l,m}^{0})\geq\frac{\gamma}{2}.
\]

\item The  $2^{l-2}$  gaps  remaining from step 1 are contained in an interval $I$ of length $(1-\gamma)/2$. To this interval we apply the procedure in step 1, splitting $I$  into two halves. Notice that there are exactly $2^{l-3}$ gaps of generation $l$ in each half. We denote the gaps on the half-interval that is farthest from $y''$ by $G_{l,m}^{1}$ (recall that $y''$ is not the center of the full interval $I$), with $m=1,2,\ldots,2^{l-3}$. These gaps satisfy
\[
 \textrm{dist}(y'', G_{l,m}^{1})\geq\frac{\gamma}{2}\, \frac{1-\gamma}{2}.
\]


\item Iterating this procedure, at each step $j$ we find exactly $2^{l-2-j}$ gaps of generation~$l$,  denoted by $G_{l,m}^j$  with $m=1,2,\ldots,2^{l-2-j}$. They satisfy 
 \begin{equation}\label{eq.definition.indices.clusters}
 \textrm{dist}(y'', G_{l,m}^j)\geq\frac{\gamma}{2}\left(\frac{1-\gamma}{2}\right)^{j}.
\end{equation}

  \item We continue the iteration until  $j=l-2$, which starts with only two gaps  of generation $l$ left. The farthest from $y''$, denoted by $G_{l,1}^{l-2}$,  satisfies \eqref{eq.definition.indices.clusters} with $j=l-2$.
 On the other hand, we denote the gap closest  to $y''$  by $G_{l,1}$. 
 
\end{enumerate}

Summarizing, among the $2^{l-1}$ gaps of generation $l$ we have selected one, called $G_{l,1}$, in step 4. The gap $G_{l,1}$  is the closest  
to $y''$ among those in generation $l$. The remaining $2^{l-1}-1$ gaps have been clustered into $l-1$ families 
$\{G_{l,m}^0\}_m,\ldots,\{G_{l,m}^j\}_m,\ldots,\{G_{l,m}^{l-2}\}_m$, where the $j$-th family contains $2^{l-2-j}$ gaps of generation $l$ 
which, in addition, satisfy~\eqref{eq.definition.indices.clusters}.

With this classification, we have
\begin{equation}\label{eq.proof.triple.all.sums}
\begin{split}
\sum_{l=1}^{\infty}&\sum_{m=1}^{2^{l-1}}
\int_{G_{l,m}}\int_{B_{1/4}^{(n-1)}} \textrm{dist}(x,\mathcal{C}_{n,\lambda})^{-\alpha p-p}\,|x-y|^{\lambda-n}\, dx'dx''
\\
&=\sum_{l=1}^{\infty}\int_{G_{l,1}}\int_{B_{1/4}^{(n-1)}} \textrm{dist}(x,\mathcal{C}_{n,\lambda})^{-\alpha p-p}\,|x-y|^{\lambda-n}\, dx'dx''
\\
&+
\sum_{l=2}^{\infty}
\sum_{j=0}^{l-2}
\sum_{m=1}^{2^{l-2-j}}
\int_{G_{l,m}^{j}}\int_{B_{1/4}^{(n-1)}} \textrm{dist}(x,\mathcal{C}_{n,\lambda})^{-\alpha p-p}\,|x-y|^{\lambda-n}\, dx'dx''.
\end{split}
\end{equation}

There are two cases to study in the sequel since integrals over $G_{l,1}$ and $G_{l,m}^{j}$ are qualitatively different. The key difference is that 
\eqref{eq.definition.indices.clusters} allows us to control $|x-y|$ from below and the integrals over $G_{l,m}^{j}$ become independent of $y$. Thus, we can apply Lemma \ref{lemma.integral.only.distance} to them. This is not possible for integrals over the gaps $G_{l,1}$. 

For the integrals over a gap $G_{l,1}$, if $y''\in \overline{G}_{l,1}$ then we can apply Lemma \ref{lemma.integral.distance.and.y} directly. 
Instead, if $y''\notin \overline{G}_{l,1}$, we move  $y''$ to the closest   boundary point of $G_{l,1}$ from $y''$, and with this procedure the integral becomes larger (since all  the distances from points in $B_{1/4}^{(n-1)}\times G_{l,1} $ decrease). With this new point  $y''$ the integral can be bounded  using Lemma~\ref{lemma.integral.distance.and.y}.

In fact, when we assume $y''\in \overline{G_{l,1}}$ Lemma \ref{lemma.integral.distance.and.y} yields
\begin{equation}\label{eq.proof.triple.integral.with.y}
\begin{split}
 \sum_{l=1}^{\infty}\int_{G_{l,1}}\int_{B_{1/4}^{(n-1)}} &\,\textrm{dist}(x,\mathcal{C}_{n,\lambda})^{-\alpha p-p}\,|x-y|^{\lambda-n}\, dx'dx''\\
&\leq
C \sum_{l=1}^{\infty} \,
 \left(
\left(
\frac{1-\gamma}{2}
\right)
^{(\lambda-\alpha p-p)(l-1)}
+
l
\left(
\frac{1-\gamma}{2}
\right)
^{l-1}
\right)\leq C,
\end{split}
\end{equation}
uniformly in $y$, since $(1-\gamma)/2<1$ and $\lambda-\alpha p-p>0$ by hypothesis \eqref{conditions.alpha}.

On the other hand, given a gap $G_{l,m}^{j}$, by \eqref{eq.definition.indices.clusters}  we have
\[
\begin{split}
\int_{G_{l,m}^{j}}\int_{B_{1/4}^{(n-1)}}& \textrm{dist}(x,\mathcal{C}_{n,\lambda})^{-\alpha p-p}\,|x-y|^{\lambda-n}\, dx'dx''
\\
&\leq 
C \left(\frac{1-\gamma}{2}\right)^{(\lambda-n)\,j}
\int_{G_{l,m}^{j}}\int_{B_{1/4}^{(n-1)}} \textrm{dist}(x,\mathcal{C}_{n,\lambda})^{-\alpha p-p}\, dx'dx''.
\end{split}
\]
Then, Lemma \ref{lemma.integral.only.distance} leads to
\[
\begin{split}
\int_{G_{l,m}^{j}}\int_{B_{1/4}^{(n-1)}} &\,\textrm{dist}(x,\mathcal{C}_{n,\lambda})^{-\alpha p-p}\,|x-y|^{\lambda-n}\, dx'dx''\\
&\leq 
C
\left(
\left(
\frac{1-\gamma}{2}
\right)
^{(\lambda-n)\,j+(n-\alpha p-p)(l-1)}
+l\left(
\frac{1-\gamma}{2}
\right)
^{(\lambda-n)\,j+l-1}
\right)
\end{split}
\]
uniformly in $m$. Observe that by our choice of $\gamma$, we have $\left(\frac{1-\gamma}{2}\right)^{\lambda-n}=2$ and thus 
\begin{equation}\label{eq.proof.triple.integral.without.y}
\begin{split}
&\sum_{l=2}^{\infty}
\sum_{j=0}^{l-2}
\sum_{m=1}^{2^{l-2-j}}
\int_{G_{l,m}^{j}}\int_{B_{1/4}^{(n-1)}} \textrm{dist}(x,\mathcal{C}_{n,\lambda})^{-\alpha p-p}\,|x-y|^{\lambda-n}\, dx'dx''
\\
&\leq
C
\sum_{l=2}^{\infty}\sum_{j=0}^{l-2}
2^{l-2-j}\left(
\left(
\frac{1-\gamma}{2}
\right)
^{(\lambda-n)\,j+(n-\alpha p-p)(l-1)}
+l\left(
\frac{1-\gamma}{2}
\right)
^{(\lambda-n)\,j+l-1}
\right)
\\
&=
\frac{C}{2}
\sum_{l=2}^{\infty}\sum_{j=0}^{l-2}
\left(
\left(
\frac{1-\gamma}{2}
\right)
^{(\lambda-\alpha p-p)(l-1)}
+l\left(
\frac{1-\gamma}{2}
\right)
^{(\lambda-n+1)\,(l-1)}
\right)
\\
&=
\frac{C}{2}
\sum_{l=2}^{\infty}
\left((l-1)
\left(
\frac{1-\gamma}{2}
\right)
^{(\lambda-\alpha p-p)(l-1)}
+l(l-1)\left(
\frac{1-\gamma}{2}
\right)
^{(\lambda-n+1)\,(l-1)}
\right)\leq C,
\end{split}
\end{equation}
uniformly in $y$, since $(1-\gamma)/2<1$, $\lambda-\alpha p-p>0$, and $\lambda>n-1$ by hypothesis.

Then, \eqref{integral.triple.norm.written.in.sums.of.Glm},
\eqref{eq.proof.triple.all.sums}, 
\eqref{eq.proof.triple.integral.with.y}, and \eqref{eq.proof.triple.integral.without.y} give
\[
\begin{split}
\int_{-\frac12}^{\frac12}\int_{B_{1/4}^{(n-1)}} \textrm{dist}(x,\mathcal{C}_{n,\lambda})^{-\alpha p-p}\,|x-y|^{\lambda-n}\, dx'dx''
\leq
C
\end{split}
\]
uniformly in $y$, and the proof is complete.
\end{proof}

We conclude the article with the proof of Lemma \ref{lemma.reductions}. 
\begin{proof}[Proof of Lemma \ref{lemma.reductions}.]
Note first that the support of $u_{\alpha,n}$, given by \eqref{function.counterexample}, is included in 
$\overline{B}_{1/4}^{(n-1)}\times[-1,1]$. Hence, 
for any given $y\in \overline{B}_{1}^{(n)}$ we have
\[
\begin{split}
 \int_{B_1^{(n)} }& |\nabla u_{\alpha,n}(x)|^{p}\,|x-y|^{\lambda-n}\, dx \leq C\int_{-1}^{1}\int_{B_{1/4}^{(n-1)}} \textrm{dist}(x,\mathcal{C}_{n,\lambda})^{-\alpha p-p}\,|x-y|^{\lambda-n}\, dx'dx'',
 \end{split}
\]
where we have used that the modulus of the gradient of a distance function is equal to $1$ a.e.\ 
Then, our goal is to show that 
\begin{multline*}
 \sup_{y\in \overline{B}_1^{(n)}}\int_{-1}^{1}\int_{B_{1/4}^{(n-1)}} \textrm{dist}(x,\mathcal{C}_{n,\lambda})^{-\alpha p-p}\,|x-y|^{\lambda-n}\, dx'dx''
\\
\leq C
 \sup_{y'=0, |y''|\leq\frac12}\int_{-\frac12}^{\frac12}\int_{B_{1/4}^{(n-1)}} \textnormal{dist}(x,\mathcal{C}_{n,\lambda})^{-\alpha p-p}\,|x-y|^{\lambda-n}\, dx'dx'',
\end{multline*}
from which \eqref{eq.lemma.reductions} follows.

First we will prove that the supremum over $y\in \overline{B}_1^{(n)}$ is bounded by the supremum over the axis, i.e., $y\in\{0\}\times[-1,1]$. Then, that it actually suffices that $|y''|\leq1/2$ instead of $|y''|\leq1$, and finally that it is enough to integrate $x''$ over $[-1/2,1/2]$ instead of the whole $[-1,1]$.
In doing these
we will use twice the monotonicity result in Lemma~\ref{lemma.monotonicity}.

Therefore, consider $y\in \overline{B}_{1}^{(n)}$, and let us show that 
\[
 J_{1}(y)
 :=
 \int_{-1}^{1}
\int_{B_{1/4}^{(n-1)}} 
 \textnormal{dist}(x,\mathcal{C}_{n,\lambda})^{-\alpha p-p}\,|x-y|^{\lambda-n}\, dx'dx''
 \leq J_{1}(0,y''),
\]
where $y=(y',y'')\in \overline{B}_1^{(n)}$, and thus
$y''\in[-1,1]$. In fact, upon a rotation in the $x'$ variables, we can assume that $y'=(y_1,0)\in\mathbb{R}\times\mathbb{R}^{n-2}$ with $y_1\geq0$. Hence, 
we are under the hypotheses of Lemma \ref{lemma.monotonicity} with $\Omega=B_{1/4}^{(n-1)}\times[-1,1]$, 
$h(z)=\textnormal{dist}(z,\mathcal{C}_{n,\lambda})^{-\alpha p-p}$ (which is non-increasing with respect to $z_1$ in $\{z_1\geq0\}$),
 $\theta=n-\lambda$, and $\eta=0$.
Therefore, Lemma \ref{lemma.monotonicity} gives that $ J_{1}$ is non-increasing with respect to $y_1$ in $[0,\infty)$. Hence we conclude that $J_{1}(y)\leq J_{1}(0,y'')$, as desired.

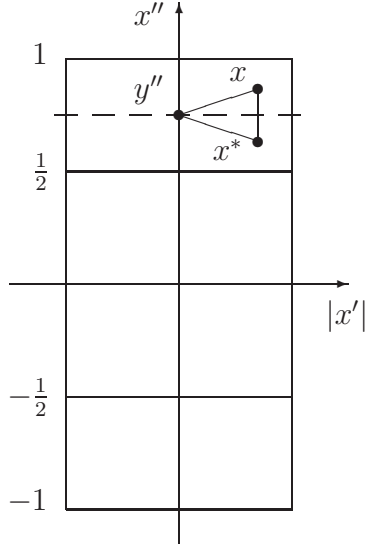
\begin{figure}[t]
\setlength{\unitlength}{1.5cm}
\begin{picture}(6,5.5)(-3,-2.5)
\put(0,0){\vector(1,0){1.5}}
\put(0,0){\line(-1,0){1.5}}
\put(0,0){\vector(0,1){2.5}}
\put(0,0){\line(0,-1){2.3}}

\put(1,-2){\line(0,1){4}} 
\put(-1,-2){\line(0,1){4}} 
\put(-1,2){\line(1,0){2}} 
\put(-1,-2){\line(1,0){2}} 
\put(-1,1){\line(1,0){2}} 
\put(-1,-1){\line(1,0){2}} 

\multiput(-1.1,1.5)(.33,0){7}{\line(1,0){.2}} 

\put(0,1.5){\line(3,1){.7}}
\put(0,1.5){\line(3,-1){.7}}
\put(.7,1.26){\line(0,1){.48}}

\put(.7,1.26){\circle*{.1}} 
\put(.7,1.73){\circle*{.1}} 
\put(0,1.5){\circle*{.1}} 

\put(-.4,2.3){$x''$}
\put(1.3,-.35){$|x'|$}
\put(.45,1.8){$x$}
\put(.3,1.1){$x^*$}
\put(-.4,1.65){$y''$}

\put(-1.3,1.95){$1$}
\put(-1.3,0.93){$\frac{1}{2}$}
\put(-1.52,-1.07){$-\frac{1}{2}$}
\put(-1.52,-2){$-1$}
\end{picture}
\caption{Monotonicity argument for $J_2$ in the proof of Lemma \ref{lemma.reductions}.}
\label{figure.monotonicity}
\end{figure}

A similar argument shows that we just need to consider the case $|y''|\leq1/2$ instead of $|y''|\leq1$. 
In fact, define 
\[
 J_{2}(y''):=
\int_{-1}^{1}
\int_{B_{1/4}^{(n-1)}} 
 \textnormal{dist}(x,\mathcal{C}_{n,\lambda})^{-\alpha p-p}\,
 \Big(|x'|^2+(x''-y'')^2\Big)^{\frac{\lambda-n}{2}}
 \, dx'dx''
\]
and assume $|y''|\geq1/2$. By symmetry, we can assume $y''\geq1/2$. 
In order to apply Lemma \ref{lemma.monotonicity}, we take $x''$ as the direction $e_1$ which is ``privileged" in the lemma, while the rest of the hypotheses are fulfilled for  
$(z'',z')\in\Omega=[-1,1]\times B_{1/4}^{(n-1)}$, 
$h(z'',z')=\textnormal{dist}((z',z''),\mathcal{C}_{n,\lambda})^{-\alpha p-p}$ (which is non-increasing with respect to $z''$ in $\{z''\geq1/2\}$),
 $\theta=n-\lambda$, and $\eta=1/2$, since the set $\mathcal{C}_{n,\lambda}$
is contained in $\{0\}\times[-1/2,1/2]$
(see Figure \ref{figure.monotonicity}).
Then, Lemma \ref{lemma.monotonicity} gives that $ J_{2}(y'')$ is non-increasing with respect to $y''$ in $[1/2,\infty)$, and therefore it is enough to study the case $y''\leq1/2$. The case $y''\leq-1/2$ follows similarly by taking $\eta=-1/2$ in the lemma.

Finally, let $y=(0,y'')$ with $|y''|\leq1/2$.
Notice that for every $x\in B_{1/4}^{(n-1)}\times[1/2,1]$ and its reflected $x^*$ with respect to $\{x''=1/2\}$, we have $|x^*-y|\leq|x-y|$ and $ \textnormal{dist}(x^*,\mathcal{C}_{n,\lambda})\leq \textnormal{dist}(x,\mathcal{C}_{n,\lambda})$, which give 
\[
\begin{split}
 \textnormal{dist}(x,\mathcal{C}_{n,\lambda})^{-\alpha p-p}\,|x-y|^{\lambda-n}\leq
 \textnormal{dist}(x^*,\mathcal{C}_{n,\lambda})^{-\alpha p-p}\,|x^*-y|^{\lambda-n}.
\end{split}
\]
This leads to
\[
\begin{split}
\int_{\frac12}^{1}\int_{B_{1/4}^{(n-1)}} & \textrm{dist}(x,\mathcal{C}_{n,\lambda})^{-\alpha p-p}\,|x-y|^{\lambda-n}\, dx'dx''
\\
&\leq 
\int_{0}^{\frac{1}{2}}\int_{B_{1/4}^{(n-1)}} \textrm{dist}(x,\mathcal{C}_{n,\lambda})^{-\alpha p-p}\,|x-y|^{\lambda-n}\, dx'dx'',
\end{split}
\]
and similarly for the integral over $B_{1/4}^{(n-1)}\times[-1,-\frac12]$.
Therefore,
\[
\begin{split}
\int_{-1}^{1}
&\int_{B_{1/4}^{(n-1)}} 
 \textnormal{dist}(x,\mathcal{C}_{n,\lambda})^{-\alpha p-p}\,
 \Big(|x'|^2+(x''-y'')^2\Big)^{\frac{\lambda-n}{2}}
 \, dx'dx'' 
 \\
 \leq&\,
2 \int_{-\frac12}^{\frac12}
\int_{B_{1/4}^{(n-1)}} 
 \textnormal{dist}(x,\mathcal{C}_{n,\lambda})^{-\alpha p-p}\,
 \Big(|x'|^2+(x''-y'')^2\Big)^{\frac{\lambda-n}{2}}
 \, dx'dx'',
 \end{split}
\]
which completes the proof of the lemma.
\end{proof}

\medskip

\noindent{\bf Acknowledgements:} The first author would like to thank Joan Orobitg and Joan Verdera for a stimulating discussion on the topic of this paper. The authors also thank Giuseppe Mingione for interesting comments and for bringing 
\cite{Adams.Lewis.1982,Bensoussan-Frehse,Fonseca-Maly-Mingione} to their 
attention after the completion of this manuscript, 
as well as the referee for some appropriate remarks and references \cite{Hou-Xiao, Talenti.1976}.


\bibliographystyle{plain}

\end{document}